\newif\ifsubsections
\newtheorem{mylem}{Lemma}[equation]
\newtheorem{mythm}[equation]{Theorem}
\newtheorem{myprop}[equation]{Proposition}
\theoremstyle{definition}
\newtheorem{mydef}[equation]{Definition}
\newtheorem{myexer}[equation]{Exercise}
\newtheorem{myex}[equation]{Example}
\newtheorem{myexs}[equation]{Examples}
\theoremstyle{remark}
\newtheorem{myrmk}[mylem]{Remark}
\DeclareMathOperator{\sep}{sep}
\newcommand{\defi}[1]{\emph{#1}} % for defined terms
\newcommand{\OO}{{\mathcal O}}
\newcommand{\mm}{{\mathfrak m}}
\newcommand{\A}{{\mathbb A}}
\newcommand{\F}{{\mathbb F}}
\newcommand{\Z}{{\mathbb Z}}
\newcommand{\Q}{{\mathbb Q}}
\newcommand{\R}{{\mathbb R}}
\newcommand{\C}{{\mathbb C}}
\newcommand{\PP}{{\mathbb P}}
\newcommand{\Xbar}{{\overline X}}
\newcommand{\Xsep}{{X^{\sep}}}
\newcommand{\kbar}{{\overline k}}
\newcommand{\Fbar}{{\overline F}}
\newcommand{\Fsep}{{F^{\sep}}}
\newcommand{\ksep}{{k^{\sep}}}
\newcommand{\calA}{{\mathcal A}}
\newcommand{\calB}{{\mathcal B}}
\newcommand{\calC}{{\mathcal C}}
\newcommand{\calV}{{\mathcal V}}
\newcommand{\kk}{{\mathbf k}}
\DeclareMathOperator{\Br}{Br}
\DeclareMathOperator{\Pic}{Pic}
\DeclareMathOperator{\Mat}{M}
\DeclareMathOperator{\inv}{inv}
\DeclareMathOperator{\HH}{H}
\DeclareMathOperator{\et}{et}
\DeclareMathOperator{\Spec}{Spec}
\DeclareMathOperator{\codim}{codim}
\DeclareMathOperator{\im}{im}
\DeclareMathOperator{\Gal}{Gal}
\begin{document}

%%%%%%%%%%%%%%%%%%%%%%%%%%%%%%%%%%%%%%%%%%%%%%%%%%%%%%%%%%%%%%%%%%%%%
%
% Filling in title page, running head and author fields.
%
% Each point in this template at which you will need to fill in your own
%     information is indicated by a dummy \replace macro (shown above). 
%     Replace this macro and its argument by your corresponding information.
%
%     For example, Gauss would replace  
%         \author{\replace{Ian Morrison}}
%     by 
%         \author{Carl Friedrich Gauß}
%
%     Note that you may add a short title for running heads, if needed as an optional
%         argument as in the example below
%         \title[Guide for PCMI Authors]{Guide for Lecturers in the Park City Mathematics Institute}
%
%
%%%%%%%%%%%%%%%%%%%%%%%%%%%%%%%%%%%%%%%%%%%%%%%%%%%%%%%%%%%%%%%%%%%%%

\title{Rational points on varieties and the Brauer-Manin obstruction}

%%%%%%%%%%%%%%%%%%%%%%%%%%%%%%%%%%%%%%%%%%%%%%%%%%%%%%%%%%%%%%%%%%%%%
%    
%    Author information--add further authors as needed
%    
%%%%%%%%%%%%%%%%%%%%%%%%%%%%%%%%%%%%%%%%%%%%%%%%%%%%%%%%%%%%%%%%%%%%%
\author{Bianca Viray}
\address{{University of Washington, Department of Mathematics, Box 354350, Seattle, WA 98195, USA}}
\email{bviray@uw.edu}
%%%%%%%%%%%%%%%%%%%%%%%%%%%%%%%%%%%%%%%%%%%%%%%%%%%%%%%%%%%%%%%%%%%%%
%    
%    Classification and abstract
%    
%%%%%%%%%%%%%%%%%%%%%%%%%%%%%%%%%%%%%%%%%%%%%%%%%%%%%%%%%%%%%%%%%%%%%

% FILL IN INFORMATION HERE
% FILL IN INFORMATION HERE
% FILL IN INFORMATION HERE
% FILL IN INFORMATION HERE

% \subjclass[2010]{Primary \fillin; Secondary \fillin}
% \keywords{\fillin}

% \begin{abstract}
% \fillin
% \end{abstract}  

% FILL IN INFORMATION HERE
% FILL IN INFORMATION HERE
% FILL IN INFORMATION HERE
% FILL IN INFORMATION HERE

%%%%%%%%%%%%%%%%%%%%%%%%%%%%%%%%%%%%%%%%%%%%%%%%%%%%%%%%%%%%%%%%%%%%%
%    
%    Make the title page
%    
%%%%%%%%%%%%%%%%%%%%%%%%%%%%%%%%%%%%%%%%%%%%%%%%%%%%%%%%%%%%%%%%%%%%%
\maketitle

%%%%%%%%%%%%%%%%%%%%%%%%%%%%%%%%%%%%%%%%%%%%%%%%%%%%%%%%%%%%%%%%%%%%%
%    
%    Your lecture notes replace the remainder of this document.
%    
%%%%%%%%%%%%%%%%%%%%%%%%%%%%%%%%%%%%%%%%%%%%%%%%%%%%%%%%%%%%%%%%%%%%%

% Authors: insert the body of your lectures here

%%%%%%%%%%%%%%%%%%%%%%%%%%%%%%%%%%%%%%%%%%%%%%%%%%%%%%%%%%%%%%%%%%%%%%%%%%%%%%%%
\section*{Prologue}
%%%%%%%%%%%%%%%%%%%%%%%%%%%%%%%%%%%%%%%%%%%%%%%%%%%%%%%%%%%%%%%%%%%%%%%%%%%%%%%%

	A central object of interest in arithmetic geometry is the set of \(k\)-rational points on a smooth projective geometrically integral \(k\)-variety \(X\).  In these notes, we focus on the fundamental problem of determining whether this set, denoted \(X(k)\), is nonempty.

	If you are handed a variety, e.g., \(X := V(x^3 + 2y^3 + 10z^3)\subset \PP^2\), and asked to determine if it has rational points, a natural first step is to try to find some solutions where the coordinates are small (in other words, solutions of \defi{small height}).  If you are lucky, you can find a point of small height (try to do so with the example above!), and you will thereby have proved that the set \(X(\Q)\neq\emptyset\).

	In fact, if \(X(k)\neq \emptyset\), then an approach like this will give a proof of nonemptyness! Any point \(P\in X(k)\) has a (finite) \defi{height} \(H\), and searching in a box of bounded height \(\{x\in \PP^n(k) : H(x)\leq B\}\) is a finite procedure.  Therefore, increasing \(B\) by constant  amount and searching in these larger boxes will (eventually!) result in you finding the point \(P\).

	However, if \(X(k) = \emptyset\), then this approach will {never} terminate.  Indeed, if your search keeps failing, you cannot tell whether you need to search in an even larger box, or whether actually there is no rational point.  Searching \emph{cannot} certify an absence \(k\)-points; for that, we need another method.  These notes focus on one such method that is known as the \defi{Brauer-Manin obstruction}.

	%%%%%%%%%%%%%%%%%%%%%%%%%%%%%%%%%%%%%%%%%%%%%%%%%%%%%%%%%%%%%%%%%%%%%%%%%%%%
	\subsection*{The goal of these notes}
	%%%%%%%%%%%%%%%%%%%%%%%%%%%%%%%%%%%%%%%%%%%%%%%%%%%%%%%%%%%%%%%%%%%%%%%%%%%%
		There are many excellent references on the Brauer-Manin obstruction (e.g.,~\cites{Skorobogatov-Torsors, Poonen-Qpts, CTS-BrauerBook} just to name a few).  We do not endeavor to improve on (or replicate!) those references here. Rather, these notes should be thought of as a  guidebook to the field, giving an overview of the current landscape, with pointers on where to go if you, the reader, wish to explore more.  Just as travel guidebooks have particular biases (affordability, nature, good for kids, etc.), so do these notes.  They give \emph{my} perspective on how the feedback loop between computation and theory currently manifests in the study of rational points and the Brauer-Manin obstruction.

	%%%%%%%%%%%%%%%%%%%%%%%%%%%%%%%%%%%%%%%%%%%%%%%%%%%%%%%%%%%%%%%%%%%%%%%%%%%%
	\subsection*{Acknowledgements}
	%%%%%%%%%%%%%%%%%%%%%%%%%%%%%%%%%%%%%%%%%%%%%%%%%%%%%%%%%%%%%%%%%%%%%%%%%%%%
		These notes were prepared to accompany a lecture series at the 2022 Park City Mathematics Institute 2022 Graduate Summer School. My perspective on this subject, particularly that presented in Sections~\ref{sec:Capturing} and~\ref{sec:Persistence}, has been greatly influenced by my collaborations with Brendan Creutz, which I have been honored to take part in.

        I thank the organizers of the PCMI 2022 Graduate Summer School, Jennifer Balakrishnan, Bjorn Poonen, and Akshay Venkatesh, for the opportunity to speak, and I thank the PCMI Director, Rafe Mazzeo, and the PCMI Program Manager, Dena Virgil, for their assistance and for running a wonderful program.  These notes were greatly improved by comments and questions from the graduate student participants at the PCMI summer school; I am grateful for their interest, engagement, and patience with an early draft of these notes.  Thomas Carr and Carlos Rivera skillfully served as TA's for my lecture series; I thank them for finding many typos and improving the exposition.  I also thank Alex Galarraga, Ting Gong, Leo Mayer, Haoming Ning, Caelan Ritter, Alex Wang, and Olivier Wittenberg for their careful readings and comments, which have improved the final version.

	%%%%%%%%%%%%%%%%%%%%%%%%%%%%%%%%%%%%%%%%%%%%%%%%%%%%%%%%%%%%%%%%%%%%%%%%%%%%
	\subsection*{Notation and conventions}
	%%%%%%%%%%%%%%%%%%%%%%%%%%%%%%%%%%%%%%%%%%%%%%%%%%%%%%%%%%%%%%%%%%%%%%%%%%%%

		Throughout, we will use \(F\) to denote an arbitrary field.  We write \(\Fsep\) for a fixed separable closure, and \(\Fbar\) for a fixed algebraic closure containing \(\Fsep\).  We use \(G_F\) to denote the absolute Galois group \(\Gal(\Fsep/F)\).
		
		We reserve \(k\) to denote a number field.  We write \(\Omega_k\) for the set of places of \(k\), and for any \(v\in\Omega_k\) we let \(k_v\) denote the completion.  If \(v\in\Omega_k\) is nonarchimedean, then we write \(\OO_v\subset k_v\) for the valuation ring, \(\mm_v\) for the maximal ideal, \(\F_v := \OO_v/\mm_v\) for the residue field, and \(k_v^{nr}\) for the maximal unramified extension of \(k_v\).  We write \(\A_k := \prod'(k_v,\OO_v)\) for the adele ring of \(k\), i.e., the restricted product \(\{(x_v) \in \prod_v k_v : \#\{v : x_v\notin \OO_v\}<\infty \}\).  
		
		We will use \(X\) to denote a smooth projective geometrically integral variety over \(k\).  For any extension \(F/k\) we write \(X_F\) for the base change of \(X\) to \(\Spec F\). We also will use the conventions \(\Xbar := X_{\kbar}\) and \(\Xsep:= X_{\ksep}\).

%%%%%%%%%%%%%%%%%%%%%%%%%%%%%%%%%%%%%%%%%%%%%%%%%%%%%%%%%%%%%%%%%%%%%%%%%%%%%%%%
\section{Obstructions to the existence of rational points}
%%%%%%%%%%%%%%%%%%%%%%%%%%%%%%%%%%%%%%%%%%%%%%%%%%%%%%%%%%%%%%%%%%%%%%%%%%%%%%%%

	Given a smooth projective geometrically integral variety \(X\) over a number field \(k\), there is no known general method to determine if the set \(X(k)\) is nonempty.\footnote{In fact, it may be an undecidable problem! This question of decidability is known as Hilbert's tenth problem; see~\cite{Poonen-H10Notices} for more details.}  Instead, we find a set \(S\) that 1) contains \(X(k)\) and 2) seems to be computable (at least in some cases).  Then, if we can prove that \(S=\emptyset\), this will imply that \(X(k) = \emptyset\).  The set \(S\) is typically called an \defi{obstruction set} since the emptyness of \(S\) \emph{obstructs} the existence of \(k\)-points.

	%%%%%%%%%%%%%%%%%%%%%%%%%%%%%%%%%%%%%%%%%%%%%%%%%%%%%%%%%%%%%%%%%%%%%%%%%%%%
	\subsection{Local obstructions}
	%%%%%%%%%%%%%%%%%%%%%%%%%%%%%%%%%%%%%%%%%%%%%%%%%%%%%%%%%%%%%%%%%%%%%%%%%%%%
		The simplest obstruction set comes from the embeddings of \(k\) into one of its completions \(k_v\).  Precisely, if \(v\) is a place of \(k\), then the embedding \(k\hookrightarrow k_v\) induces an inclusion \(X(k)\subset X(k_v)\).  So if there are no \(k_v\)-points on \(X\), then there are also no rational points. In addition, if \(v\) is nonarchimedean, then Hensel's lemma can be used to show that there is some \(n\) such that \(X(k_v)\neq \emptyset\) if and only if \(X(\OO_v/\mm_v^n)\neq\emptyset\)~\cite{Nerode}.  In particular, it is a finite computation to determine if \(X(k_v) \neq\emptyset\) for nonarchimedean places \(v\) (and the same is true for archimedean places, see~\cite{Poonen-Qpts}*{Remark 2.6.4}).  (Note, analogous effectivity results are not known for local fields of positive characteristic)

		\begin{myexer}Show that
			\[
				X := V(x^2 + y^2 + 7z^2)\subset \PP^2
			\]
			has no \(\Q_7\)-points.  Conclude that \(X(\Q) = \emptyset\).
	\end{myexer}
	
        We may package these obstructions together using the ad\`eles \(\A_k\) of \(k\).  Just as above, the embedding \(k\hookrightarrow\A_k\) allows us to view \(X(k)\) as a subset of \(X(\A_k)\), and one can prove (see~\cite{Poonen-Qpts}*{Section 2.6.3}) that
        \[
            X(\A_k) = \prod_{v\in\Omega_k} \!\!'  (X(k_v), X(\OO_v)). 
        \]
        Since, by assumption, \(X\) is projective, the valuative criterion for properness implies that the inclusion \(X(\OO_v)\subset X(k_v)\) is an equality.  Hence, \(X(\A_k) = \prod X(k_v)\).  
		
		It turns out that even though \(X(\A_k)\) encapsulates solubility conditions at the infinitely many completions at once, it is still a finite computation to determine whether \(X(\A_k)\neq\emptyset\).  
		\begin{myprop}[c.f.~\cite{Poonen-Qpts}*{Thm. 7.7.2}]\label{prop:LocalSolubilityFiniteComputation}
			Let \(X\) be a smooth projective geometrically integral variety over a number field \(k\).  Then the set of places where \(X\) fails to have \(k_v\)-points is finite and is contained in an effectively computable set.
		\end{myprop}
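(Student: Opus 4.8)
The plan is to spread \(X\) out to an integral model over an open subscheme of \(\Spec\OO_k\), and then to reduce the question, at all but finitely many places, to the solubility of a smooth projective variety over a finite field, where the number of points can be controlled by the Weil bounds. The finiteness will follow immediately, and the effective containment will follow from keeping track of the invariants on which these bounds depend.

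First I would fix a projective embedding \(X\hookrightarrow\PP^n_k\) whose defining equations have coefficients in \(\OO_k\); clearing denominators produces a closed subscheme \(\calX\subset\PP^n_{\OO_k}\) whose generic fiber is \(X\). By the standard spreading-out results (constructibility and openness of the relevant loci over \(\Spec\OO_k\)), there is a finite set of places \(S\subset\Omega_k\), containing all archimedean places, such that over \(U:=\Spec\OO_{k,S}\) the morphism \(\calX_U\to U\) is smooth and projective with geometrically integral fibers. Concretely, \(S\) can be taken to consist of the archimedean places together with the primes dividing a suitable discriminant/resultant extracted from the defining data via the Jacobian criterion, and is therefore effectively computable.

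Next, for a place \(v\notin S\), the fiber \(\calX_{\F_v}\) is a smooth projective geometrically integral variety over \(\F_v\) of dimension \(d:=\dim X\). Here I invoke the Lang--Weil estimate (resting ultimately on Deligne's proof of the Weil conjectures): there is a constant \(C\), depending only on \(n\), \(d\), and the degree of the embedding, hence only on the fixed data attached to \(X\), such that
\[
	\bigl|\#\calX_{\F_v}(\F_v) - (\#\F_v)^{d}\bigr| \;\leq\; C\,(\#\F_v)^{d-1/2}.
\]
Consequently there is an effectively computable threshold \(q_0\), depending only on \(C\) and \(d\), so that \(\#\F_v\geq q_0\) forces \(\#\calX_{\F_v}(\F_v)>0\). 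Since \(\calX_{\F_v}\) is smooth over \(\F_v\), any such point is a smooth point, so Hensel's lemma lifts it to an \(\OO_v\)-point, and thus to a \(k_v\)-point. Hence \(X(k_v)\neq\emptyset\) for every \(v\notin S\) with \(\#\F_v\geq q_0\).

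Finally, let \(T\) be the set of nonarchimedean places \(v\notin S\) with \(\#\F_v<q_0\); this set is finite and effectively computable. Then \(S\cup T\) is finite and effectively computable, and the argument above shows \(X(k_v)\neq\emptyset\) for all \(v\notin S\cup T\), so the set of places of bad local solubility is contained in \(S\cup T\). The main obstacle is securing genuine \emph{effectivity} rather than mere existence: one must produce an explicit Lang--Weil constant \(C\) in terms of the numerical invariants of the embedding, and one must extract \(S\) explicitly from the equations. Both are standard but require care; by contrast, the finiteness assertion alone drops out as soon as the spreading-out and the qualitative Weil bound are in hand.
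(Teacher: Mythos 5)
Your argument is correct and follows the same overall skeleton as the paper's proof (spread out, count points over the residue fields for large \(\#\F_v\), lift by Hensel, and collect the finitely many remaining places into an effectively computable exceptional set), but the key input differs. The paper first slices \(X\) with sufficiently general hyperplanes to produce a smooth geometrically integral \emph{curve} \(C\subset X\), and then applies the Hasse--Weil bound \(|\#C(\F_v)-\#\F_v-1|\leq 2g\sqrt{\#\F_v}\) to the reductions of \(C\); a smooth \(\F_v\)-point of \(C\bmod v\) lifts and in particular gives a \(k_v\)-point of \(X\). You instead work with the full \(d\)-dimensional fibers \(\calX_{\F_v}\) and invoke the Lang--Weil estimate. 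Both routes are valid; what the paper's reduction to a curve buys is that the only point-counting input needed is Weil's classical bound for curves, whose constant is just the genus of \(C\) and is immediately computable from the hyperplane sections, whereas your route requires an \emph{effective} Lang--Weil constant in terms of \(n\), \(d\), and the degree --- such constants do exist (and indeed the standard proof of Lang--Weil proceeds by exactly the hyperplane-slicing induction the paper performs in one step), but pinning one down explicitly is the extra care you correctly flag at the end. One small historical correction: Lang--Weil (1954) does not rest on Deligne's proof of the Weil conjectures; it needs only the Riemann hypothesis for curves over finite fields, which was already due to Weil.
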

        This proposition implies that given \(X\) one can effectively compute a finite set \(S\subset \Omega_k\) such that \(X(k_v)\neq\emptyset\) for all \(v\notin S\).  Then for each \(v\in S\), it is a finite computation to determine whether \(X(k_v) = \emptyset\) (see the first paragraph of this section).  Since \(S\) is finite, this proves our assertion that determining whether \(X(\A_k)\neq\emptyset\) is a finite computation.
		\begin{proof}[Proof of Proposition~\ref{prop:LocalSolubilityFiniteComputation}]
			Since \(X\) is smooth and geometrically integral, then by slicing with sufficiently general hyperplanes, we may obtain a smooth, geometrically integral curve \(C\subset X\).  By generic smoothness, \(C\) necessarily has good reduction away from a finite set of places (see~\cite{Poonen-Qpts}*{Section 3.2} for more details).  Furthermore, the Hasse-Weil bounds~\cite{Poonen-Qpts}*{Cor. 7.2.1} imply that, for any place of good reduction with \(\#\F_v\) sufficiently large, the reduction \(C\bmod v\) is guaranteed to have a smooth \(\F_v\)-point.  Hensel's lemma~\cite{Poonen-Qpts}*{3.5.63} then implies that \(C\), and hence \(X\), has \(\Q_v\)-points for such \(v\).  Thus we have proved that \(X(k_v)\neq\emptyset\) for all \(v\) outside the following subset
			\[
			\left\{v|\infty\right\}\cup \left\{C \bmod v \textup{ singular}\right\}	\cup\left\{\#\F_v < N\right\},
			\]
			where \(N\) is some positive integer given by the Hasse-Weil boards.  Since this set is finite and effectively computable, we have proved the desired result.
		\end{proof}
		\begin{myexer}
			\textit{Note:} For this problem, the following specific consequence of Hensel's Lemma will be useful:  If \(p\) be a prime and \(u\in \Z_p^{\times}\), then
			\[
				u \in \Z_p^{\times2}\Leftrightarrow \begin{cases}
					u\bmod p \in \F_p^{\times2} & \textup{if }p\neq 2,\\
					u \equiv 1 \pmod 8 & \textup{if }p= 2.
				\end{cases}
			\]\label{exer:ConicLocal}
			
			\begin{enumerate}
				\item Let \(p\) be an odd prime and let \(a,b,c\in \Z - p\Z\).  Show that \(\{ax^2 : x\in \F_p\}\) and \(\{c - by^2 : y \in \F_p\}\) both have cardinality \(\frac{p+1}{2}\) and that, therefore, the sets contain a common value.  Use this result to show that
				\[
					X := V(ax^2 + by^2 + cz^2)\subset \PP^2
				\]
				has a \(\Q_p\)-point.
				\item Determine whether \(X := V(5x^2 + 7y^2 - 3z^2)\subset \PP^2\) has \(\A_{\Q}\)-points.
				\item Let \(a,b,c\in \Z\) be squarefree, pairwise relatively prime integers.  Prove that \(X := V(ax^2 + by^2 + cz^2)\) has \(\A_{\Q}\)-points if and only if \(a,b,c\) are not all the same sign and \(ax^2 + by^2 + cz^2\) has solutions in \(\Z/8abc\Z\) with the property that for every \(p|8abc\), at least two of the coordinates are nonzero modulo \(p\).

			\end{enumerate}
		\end{myexer}

		%%%%%%%%%%%%%%%%%%%%%%%%%%%%%%%%%%%%%%%%%%%%%%%%%%%%%%%%%%%%%%%%%%%%%%%%
		\subsubsection{The local-to-global principle}
		%%%%%%%%%%%%%%%%%%%%%%%%%%%%%%%%%%%%%%%%%%%%%%%%%%%%%%%%%%%%%%%%%%%%%%%%
			\begin{mydef}
				A class of varieties \(\calC\) satisfies the \defi{local-to-global principle} if, for all \(X\in \calC\), 
				\[
					X(\A_k) \neq\emptyset \Leftrightarrow X(k)\neq \emptyset.	
				\]
			\end{mydef}
			\begin{myexs}\item\label{ex:LGP}
				\begin{enumerate}
					\item Quadrics, in any number of variables, satisfy the local-to-global principle by the Hasse-Minkowski theorem.
					\item \defi{Severi-Brauer varieties}, i.e., varieties that are geometrically \emph{isomorphic} to projective space, satisfy the local-to-global principle by the Albert-Brauer-Hasse-Noether theorem.\label{part:SBvar}
				\end{enumerate}
			\end{myexs}

			Note that every variety in one of the two classes in the above example is geometrically rational, i.e., over \(\kbar\) there is a birational map \(\Xbar\dasharrow \PP^n_{\kbar}\).  Classes of varieties with more complicated geometry often have members that fail the local-to-global principle.  The first example of this failure dates back to the 1940's and is due to Lind and Reichardt, independently.

			\begin{mythm}[\cites{Lind, Reichardt}]
				Let \(C\subset \PP^3\) be the smooth projective genus \(1\) curve defined by the two quadrics
				\(
				 	2y^2 = 	w^2 - 17z^2\) and \(wz = x^2.\)
				Then \[C(\A_{\Q}) \neq \emptyset\quad\textup{and}\quad C(\Q) = \emptyset.\]
			\end{mythm}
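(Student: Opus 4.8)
The plan is to treat the two assertions separately, proving everywhere-local solubility by exhibiting explicit points together with a Hasse--Weil count, and proving \(C(\Q)=\emptyset\) by a descent resting on quadratic reciprocity. A convenient first move for both halves is to pass to a simpler model: eliminating \(w\) through \(w=x^2/z\) collapses the two quadrics into the single equation \(2Y^2=x^4-17z^4\) (with \(Y=yz\)), a genus-one curve birational to \(C\) and isomorphic to it away from \(z=0\) and \(y=0\). I would dispose of those two excluded loci directly: a point with \(z=0\) forces \(x=0\) and \(2y^2=w^2\), which exists over a field precisely when \(2\) is a square there, while a point with \(y=0\) forces \(w^2=17z^2\) and \(x^2=wz\), which exists precisely when \(17\) is a fourth power there.

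For local solubility I would then run through the places. Since \(2\) is a square in \(\R\) and in \(\Q_{17}\) (for \(p=17\) because \(6^2\equiv 2\pmod{17}\) and Hensel's lemma), the \(z=0\) point shows \(C(\R)\neq\emptyset\) and \(C(\Q_{17})\neq\emptyset\). Since \(17\equiv 1\pmod{16}\), the number \(17\) is a fourth power in \(\Q_2\), so the \(y=0\) point shows \(C(\Q_2)\neq\emptyset\). For every remaining prime \(p\nmid 2\cdot 17\) the quartic model has good reduction, so the method in the proof of Proposition~\ref{prop:LocalSolubilityFiniteComputation} applies: the Hasse--Weil bound gives \(\#C(\F_p)\geq p+1-2\sqrt p>0\), and a smooth \(\F_p\)-point lifts by Hensel's lemma. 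Together these give \(C(\A_\Q)\neq\emptyset\).

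For \(C(\Q)=\emptyset\), the idea is a descent. A rational point yields, after clearing denominators, a primitive integer solution \((x,z,Y)\) of \(2Y^2=x^4-17z^4\) with \(\gcd(x,z)=1\); a short argument rules out \(17\mid x\) (it would force \(17\mid Y\) and then \(17\mid z\), contradicting primitivity), so \(17\nmid x\) and hence \(17\nmid Y\). The heart of the proof is to compute \(\left(\frac{Y}{17}\right)\). For each odd prime \(p\mid Y\) one has \(p\neq 17\) and \(p\nmid z\), so reducing the equation modulo \(p\) gives \((x/z)^4\equiv 17\pmod p\); thus \(17\) is a square modulo \(p\), and quadratic reciprocity (using \(17\equiv 1\pmod 4\)) gives \(\left(\frac{p}{17}\right)=1\). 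As \(\left(\frac{-1}{17}\right)=\left(\frac{2}{17}\right)=1\) as well, multiplying over the prime factorization of \(Y\) yields \(\left(\frac{Y}{17}\right)=1\), i.e.\ \(Y\) is a square modulo \(17\).

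To finish, I would reduce \(2Y^2=x^4-17z^4\) modulo \(17\) to get \(2\equiv x^4Y^{-2}\pmod{17}\); because \(Y\) is a square mod \(17\), the right-hand side is a fourth power mod \(17\), forcing \(2\) to be a fourth power mod \(17\). But the fourth powers mod \(17\) are \(\{1,4,13,16\}\), which omits \(2\) --- the desired contradiction. The main obstacle is exactly this global step: the local points are easy to write down, whereas the absence of rational points hinges on the reciprocity computation of \(\left(\frac{Y}{17}\right)\), and one must take care that the descent genuinely accounts for the degenerate solutions (the cases \(17\mid x\) and \(z=0\)) so that no rational point escapes the argument.
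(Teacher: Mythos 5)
The paper does not actually prove this theorem---it is stated with citations to Lind and Reichardt and the text immediately moves on---so there is no in-text proof to compare against. Your argument is the classical Lind--Reichardt descent on the model \(2Y^2 = x^4 - 17z^4\), and it is correct and complete: the local points at \(\infty\), \(17\), and \(2\) are right (including the check that \(17\equiv 1\pmod{16}\) makes \(17\) a fourth power in \(\Q_2\)), the good-reduction/Hasse--Weil argument covers the remaining primes, and the global step correctly combines \(\left(\frac{17}{p}\right)=1\) for odd \(p\mid Y\) with \(\left(\frac{-1}{17}\right)=\left(\frac{2}{17}\right)=1\) and reciprocity to force \(2\) to be a fourth power modulo \(17\), which it is not. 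The degenerate loci \(z=0\) and \(y=0\) are handled, and the reformulation of these loci as ``\(2\) is a square'' and ``\(17\) is a fourth power'' is exactly right, so no rational point escapes the descent.
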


			To prove that \(C\) has no rational points despite the presence of the adelic points, we need a \emph{refined} obstruction set, that is, an intermediate set \(S\) that sits between the set of rational points and the set of adelic points.

	%%%%%%%%%%%%%%%%%%%%%%%%%%%%%%%%%%%%%%%%%%%%%%%%%%%%%%%%%%%%%%%%%%%%%%%%%%%%
	\subsection{An introduction to the Brauer-Manin obstruction}
	%%%%%%%%%%%%%%%%%%%%%%%%%%%%%%%%%%%%%%%%%%%%%%%%%%%%%%%%%%%%%%%%%%%%%%%%%%%%
		
		The goal of this section is to define the Brauer-Manin set \(X(\A_k)^{\Br}\), and show that it is a refined obstruction set, i.e., that we have the following containments:
		\[
			X(k)\subset X(\A_k)^{\Br}\subset X(\A_k).	
		\]
		To do so, we must first introduce the Brauer group.

		%%%%%%%%%%%%%%%%%%%%%%%%%%%%%%%%%%%%%%%%%%%%%%%%%%%%%%%%%%%%%%%%%%%%%%%%
		\subsubsection{The Brauer group of a field} 
		%%%%%%%%%%%%%%%%%%%%%%%%%%%%%%%%%%%%%%%%%%%%%%%%%%%%%%%%%%%%%%%%%%%%%%%%

			Let \(F\) be a field.
			\begin{mydef}\hfill
				\begin{enumerate}
				\item A \defi{central simple algebra} over \(F\) is a finite dimensional \(F\)-algebra \(\calA\) whose center is exactly \(F\) and that has no nontrivial two-sided ideals.
				\item Two central simple algebras \(\calA, \calB\) over \(F\) are said to be \defi{Brauer equivalent} if there exist positive integers \(n,m\) such that \(\calA\otimes_F \Mat_m(F)\simeq\calB\otimes_F\Mat_n(F)\).
				\end{enumerate}
			\end{mydef}
			\begin{mythm}[\cite{GS-CSA}*{Prop. 2.4.8 and Thm. 4.4.7}]\label{thm:BrF}
				Let \(F\) be a field.  The set of Brauer equivalence classes of central simple algebras over \(F\) forms a torsion abelian group under tensor product, where the identity element is the class of \(\Mat_n(F)\). Moreover, this group is isomorphic to the Galois cohomology group \(\HH^2(G_F, (F^{\textup{sep}})^{\times})\).
			\end{mythm}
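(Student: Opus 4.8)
The plan is to first establish the group structure on Brauer classes by pure algebra, and then to identify this group with $\HH^2(G_F,(\Fsep)^\times)$ by Galois descent, deducing the torsion statement for free from the cohomological description.

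For the group structure, the key structural input is Wedderburn's theorem, which says that every central simple algebra is isomorphic to $\Mat_n(D)$ for a division algebra $D$ with center $F$, unique up to isomorphism; thus Brauer classes are in bijection with isomorphism classes of central division algebras. I would first check that the tensor product of two central simple $F$-algebras is again central simple (centrality is immediate, and simplicity follows from the standard argument that a two-sided ideal of $\calA\otimes_F\calB$ is generated by its intersection with $\calB$), so that tensor product descends to a well-defined operation on classes once one verifies compatibility with Brauer equivalence using $\Mat_m(F)\otimes_F\Mat_n(F)\cong\Mat_{mn}(F)$. Associativity and commutativity are inherited from the tensor product, and the class of $F$ (equivalently of any $\Mat_n(F)$) is the identity since $\calA\otimes_F F\cong\calA$. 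The one genuinely algebraic point is the existence of inverses: for this I would exhibit the map $\calA\otimes_F\calA^{\mathrm{op}}\to\operatorname{End}_F(\calA)\cong\Mat_{d}(F)$, $a\otimes b\mapsto (x\mapsto axb)$ with $d=\dim_F\calA$, which is an isomorphism because it is injective (its source being simple) and both sides have dimension $d^2$. Hence $[\calA^{\mathrm{op}}]$ is inverse to $[\calA]$.

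To obtain the cohomological description I would pass through relative Brauer groups. For a finite Galois extension $L/F$ with group $\Gamma=\Gal(L/F)$, define $\Br(L/F)$ to be the subgroup of classes split by $L$, i.e. those $[\calA]$ with $\calA\otimes_F L\cong\Mat_n(L)$. Two reductions are needed: first, that every central simple algebra is split by some finite Galois extension, which follows because a maximal subfield of a central division algebra is separable and may be enlarged to a Galois splitting field, giving $\Br(F)=\varinjlim_L\Br(L/F)$; second, and this is the crux, that there is a natural isomorphism $\Br(L/F)\cong\HH^2(\Gamma,L^\times)$. The classical route is the crossed-product construction: to a $2$-cocycle $c\colon\Gamma\times\Gamma\to L^\times$ one attaches the algebra $(L/F,c)=\bigoplus_{\sigma\in\Gamma}Lu_\sigma$ with relations $u_\sigma x=\sigma(x)u_\sigma$ and $u_\sigma u_\tau=c(\sigma,\tau)u_{\sigma\tau}$, and checks that it is a central simple algebra of dimension $[L:F]^2$ split by $L$, that cohomologous cocycles yield Brauer-equivalent algebras, and that the construction is multiplicative.

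I expect the main obstacle to be the surjectivity of the crossed-product map, namely that \emph{every} class in $\Br(L/F)$ is represented by a crossed product; this requires the Skolem--Noether theorem and the double centralizer theorem to realize $L$ inside a suitable representative as a maximal subfield on which $\Gamma$ acts by conjugation. (An alternative that packages the same content more conceptually is Galois descent: degree-$n$ forms of $\Mat_n$ split by $L$ are classified by the nonabelian $\HH^1(\Gamma,\mathrm{PGL}_n(L))$, and the connecting map for $1\to L^\times\to\mathrm{GL}_n(L)\to\mathrm{PGL}_n(L)\to1$, together with the vanishing of $\HH^1(\Gamma,\mathrm{GL}_n(L))$ from Hilbert's Theorem 90, yields the isomorphism.) Finally, passing to the limit gives $\Br(F)\cong\HH^2(G_F,(\Fsep)^\times)$, and the torsion claim is then immediate: each $\HH^2(\Gamma,L^\times)$ is annihilated by $|\Gamma|=[L:F]$, since corestriction--restriction is multiplication by $[\Gamma:1]$ and factors through the vanishing cohomology of the trivial group, so the whole direct limit is torsion.
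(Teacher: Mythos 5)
Your proposal is correct and follows essentially the same route as the paper's source for this result: the paper gives no proof of its own, citing Gille--Szamuely, and your argument (tensor product for the group law, the sandwich map \(\calA\otimes_F\calA^{\mathrm{op}}\xrightarrow{\sim}\operatorname{End}_F(\calA)\) for inverses, splitting by finite Galois extensions, crossed products/Galois descent via Hilbert 90 for the identification with \(\HH^2\), and restriction--corestriction for torsion) is precisely the standard proof given there. The only slight imprecision is that one should say a central division algebra \emph{admits} a separable maximal subfield (Noether--Jacobson), not that every maximal subfield is separable; this does not affect the argument.
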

			\begin{mydef}
				The group from Theorem~\ref{thm:BrF} is known as the \defi{Brauer group of \(F\)} and is denoted \(\Br  F\).
			\end{mydef}

			\begin{myex}\label{ex:Quaternion}
				Assume that \(F\) has characteristic different from \(2\) and let \(a,b\in F^{\times}\).  Then the \defi{quaternion algebra}
				\[
					\calA_{a,b} := 	F\otimes F\cdot i \otimes F\cdot j \otimes F \cdot ij, \quad i^2 = a, j^2 = b, ji = -ij
				\]
				is a central simple algebra over \(F\).  It has order dividing \(2\) in \(\Br  F\)~\cite{GS-CSA}*{Cor. 1.5.3}.
			\end{myex}
			\begin{myexer}\label{exer:TrivialBrauerClass}
				Let \(F\) be a field of characteristic different from \(2\) and let \(a,b\in F^{\times}\).  
        
        		Prove that \(\calA_{a,b}\simeq \Mat_2(F)\) if and only if there is some \(x,y,z\in F\), not all zero, such that \(ax^2 + by^2 = z^2\).
    
				Conclude that \( (a,b) := [\calA_{a,b}] \in \Br  F\) is trivial if and only if the conic \(C_{a,b}:ax^2 + by^2 = z^2\) has an \(F\)-rational point, and that  \(C_{a,b}(F) \neq\emptyset\) if and only if \(a\in \textup{N}(k(\sqrt{b})^{\times})\).  (By symmetry this is equivalent to \(b\in \textup{N}(k(\sqrt{a})^{\times})\)).
			\end{myexer}
    
			\begin{myrmk}\label{rmk:ProductsOfQuaternions}
				One can also show that \(\calA_{a,b}\otimes_k \calA_{a,c} \simeq \Mat_2(\calA_{a,bc}),\) (see~\cite{GS-CSA}*{Lemma 1.5.2}) which implies that in \(\Br  F,\) we have \((a,b)(a,c) = (a,bc)\).
			\end{myrmk}

			The above example of quaternion algebras is a special case of a general correspondence.  To any central simple algebra over \(F\), one can associate a Severi-Brauer variety (see Example~\ref{ex:LGP}\eqref{part:SBvar}), and vice versa~\cite{GS-CSA}*{Section 5.2}.

			Our interest in the Brauer group stems from the fact that, over a number field \(k\), the Brauer group encodes the abelian reciprocity laws of \(k\).  This is encapsulated by the fundamental exact sequence of global class field theory.
			\begin{mythm}[\cite{Poonen-Qpts}*{Thms. 1.5.34 and 1.5.36}]\label{thm:CFT}
				Let \(k\) be a number field.  For each place \(v\), there is an injective homomorphism
				\[
					\inv_v\colon \Br  k_v \to \Q/\Z,	
				\]
				that is an isomorphism for nonarchimedan \(v\).  For archimedean \(v\), the image is \(\frac12\Z/\Z\) if \(k_v = \R\) and \(0\) if \(k_v = \C\).  Furthermore, these isomorphisms fit together in the following short exact sequence
				\begin{equation}\label{eq:BrExact}
					0 \to \Br  k \to \oplus_v \Br  k_v \xrightarrow{\sum_v \inv_v}\Q/\Z\to 0.	
				\end{equation}
			\end{mythm}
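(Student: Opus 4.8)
The plan is to assemble the sequence~\eqref{eq:BrExact} starting from the local invariant maps and to treat its exactness as the substantive content of global class field theory, which I would cite rather than reprove.

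First I would construct \(\inv_v\) for nonarchimedean \(v\). Writing \(\Br k_v = \HH^2(G_{k_v}, (k_v^{\sep})^{\times})\), the key reduction is that every class is split by an unramified extension, so that \(\Br k_v\) is computed by \(\HH^2(\Gal(k_v^{nr}/k_v), (k_v^{nr})^{\times})\) with \(\Gal(k_v^{nr}/k_v)\cong \hat\Z\) topologically generated by Frobenius. Feeding the valuation sequence \(0\to \OO^{\times}\to (k_v^{nr})^{\times}\xrightarrow{\ord}\Z\to 0\) into the long exact sequence and using that \(\HH^i(\hat\Z,\OO^{\times}) = 0\) for \(i\geq 1\) (which descends, via Hensel's lemma and Hilbert 90, to the vanishing of the positive-degree cohomology of \(\Gal(\Fbar_v/\F_v)\) on \(\Fbar_v^{\times}\)), one obtains \(\HH^2(\hat\Z,(k_v^{nr})^{\times})\cong \HH^2(\hat\Z,\Z)\cong \HH^1(\hat\Z,\Q/\Z)\cong \Q/\Z\). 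This isomorphism is \(\inv_v\), and it is surjective precisely because of the unramified-splitting step. For the archimedean places I would compute directly: \(\Br \C = 0\) since \(\C\) is algebraically closed, whereas \(\Br \R\cong \frac12\Z/\Z\) is generated by the Hamilton quaternions (an instance of Example~\ref{ex:Quaternion}), yielding the stated images.

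Next I would verify that the global map is well defined into the \emph{direct} sum rather than the product: a central simple algebra over \(k\) spreads out to an Azumaya algebra over \(\OO_v\) for all but finitely many \(v\), and any such class has trivial local invariant. This produces the homomorphism \(\Br k\to \oplus_v \Br k_v\) and makes the composite \(\sum_v \inv_v\) meaningful.

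The hard part is the exactness of~\eqref{eq:BrExact}. It splits into three claims: (i) injectivity of \(\Br k\to \oplus_v \Br k_v\), which is the Hasse principle for central simple algebras (Albert--Brauer--Hasse--Noether); (ii) the reciprocity law \(\sum_v \inv_v = 0\) on the image of \(\Br k\); and (iii) exactness in the middle together with surjectivity of \(\sum_v \inv_v\). All three rest on the computation of \(\HH^2\) of the idele class group \(\A_k^{\times}/k^{\times}\) and the existence of a canonical fundamental class therein, established either cohomologically (Tate's theorem plus the two fundamental inequalities) or analytically (via \(L\)-functions and Chebotarev density). I expect (iii)---the surjectivity and middle exactness---to be the genuine obstacle, since it encodes the full force of the reciprocity law rather than a formal diagram chase; the construction of the invariant maps in the first paragraph, by contrast, is essentially a cohomological computation.
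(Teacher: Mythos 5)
The paper gives no proof of this theorem: it is quoted verbatim from \cite{Poonen-Qpts}*{Thms.\ 1.5.34 and 1.5.36} as a black-box input from class field theory, so there is no internal argument for your sketch to diverge from. Your outline is the correct standard one --- constructing \(\inv_v\) through unramified splitting and \(\HH^2(\hat{\Z},\Z)\cong \HH^1(\hat{\Z},\Q/\Z)\cong\Q/\Z\), computing \(\Br\R\) and \(\Br\C\) directly, spreading out to Azumaya algebras to land in the direct sum, and correctly isolating the exactness of~\eqref{eq:BrExact} (injectivity, reciprocity, and middle exactness plus surjectivity, all resting on the fundamental class in \(\HH^2\) of the idele class group) as the genuinely deep step that one cites rather than reproves at the level of these notes.
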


			\begin{myrmk}
				When working with explicit computations, one has to take care that the maps \(\inv_v\) are defined in a globally compatible way.  An arbitrary collection of isomorphisms \(\phi_v\colon \Br  k_v\to \Q/\Z\) for nonarchimedean \(v\) will not necessarily give an exact sequence as in~\eqref{eq:BrExact}.  See~\cite{CTS-BrauerBook}*{Definition 13.1.7} for a definition of \(\inv_v\).  The explicit examples in these notes will restrict to \(2\)-torsion elements to avoid this subtlety.
			\end{myrmk}

			\begin{myex}\label{ex:QR-Br}
				Let \(a,b\in k^{\times}\) and let  \(v\) be a place of \(k\).  Using Example~\ref{ex:Quaternion}, one can check that \(\calA_{a,b}\in \Br  k_v\) is nontrivial if and only if the Hilbert symbol \((a,b)_v = -1\).  (Indeed, the definition of the Hilbert symbol is that \( (a,b)_v = -1\) if \(C_{a,b}(k_v) = \emptyset\) and \((a,b) _v= 1\) otherwise.)  Thus, the fact \(\sum_v \inv_v(\calA_{a,b}) = 0\) is exactly the product formula 
				\[
					\prod_v(a,b)_v = 1,	
				\]
				which, over \(\Q\), is also equivalent to quadratic reciprocity.
			\end{myex}
            \begin{myexer}\label{exer:QRandProductFormula}\hfill
                \begin{enumerate}
                    \item Let \(p\) be an odd prime and let \(a\) be an integer that is relatively prime to \(p\).  Prove that the Hilbert symbol \((a,p)_p\) equals the Legendre symbol \(\left(\frac{a}p\right)\).
                    \item Let \(p,q\) be distinct odd primes.  Prove \((p,q)_2 = (-1)^{\frac{(p-1)(q-1)}{4}}\).
                    \item Let \(p,q\) be distinct odd primes. Using Exercise~\ref{exer:ConicLocal}, prove that
                    \[
                        \prod_{v}(p,q)_v = (p,q)_p(p,q)_q(p,q)_2 = \left(\frac{q}p\right)\left(\frac{p}q\right)(-1)^{\frac{(p-1)(q-1)}{4}}.
                    \]
                    \item Let \(p\) be an odd prime.  Following the same strategy as above, prove that 
                    \[
                        \prod_{v}(2,p)_v  = \left(\frac{2}p\right)(-1)^{\frac{(p^2-1)}{8}}.
                    \]
                    \item  Let \(p\) be an odd prime.  Following the same strategy as above, prove that 
                    \[
                        \prod_{v}(-1,p)_v  =\left(\frac{-1}p\right)(-1)^{\frac{(p-1)}{2}}
                    \]
                    \item Prove that for all \(a,b\in \Q^{\times}\), \(\prod_v (a,b)_v = 1\) if and only if for all pairs of distinct odd primes \(p,q\), we have
                    \[
                        \prod_v (p,q)_v = 1, \quad
                        \prod_v (2,p)_v = 1, \quad
                        \prod_v (-1,p)_v = 1.
                    \]
                    \item Combine the previous parts to observe that the product formula over \(\Q\) is equivalent to quadratic reciprocity.
                \end{enumerate}
            \end{myexer}

			For more extensive introductions on the Brauer group of a field, see \citelist{\cite{GS-CSA}\cite{CTS-BrauerBook}*{Chap. 1} \cite{Milne-EC}*{Chap. 4}}.
		%%%%%%%%%%%%%%%%%%%%%%%%%%%%%%%%%%%%%%%%%%%%%%%%%%%%%%%%%%%%%%%%%%%%%%%%
		\subsubsection{The Brauer group of a variety}
		%%%%%%%%%%%%%%%%%%%%%%%%%%%%%%%%%%%%%%%%%%%%%%%%%%%%%%%%%%%%%%%%%%%%%%%%
			The notion of the Brauer group of a field can be generalized to the Brauer group of a scheme using \'etale cohomology.  
			\begin{mydef}
				The Brauer group of a scheme \(X\) is \(\Br  X := \HH^2_{\et}(X, \mathbb{G}_m)\).\footnote{There are several ways to extend the definition of the Brauer group from a field to a scheme.  The definition we use here is sometimes called the \defi{cohomological Brauer group}, since, in complete generality, it is not necessarily equal to other possible definitions of the Brauer group of a scheme.  However, under the standing assumptions of these notes (i.e., smooth projective varieties over a field of characteristic \(0\)), all of the generalization coincide, so we will simply refer to this as the Brauer group.}
			\end{mydef}

			For our purposes, the following properties will be particularly useful.
			\begin{mythm}\label{thm:BrProps}\hfill
				\begin{enumerate}
					\item For any field \(F\) we have \(\Br  F = \Br\, \Spec F\).
					\item The Brauer group is functorial, i.e., if there is a morphism of varieties \(f\colon X\to Y\), then we have a homomorphism \(f^*\colon \Br  Y \to \Br  X\).\label{part:functoriality}
					\item  If \(X\) is a smooth geometrically integral variety over a field \(F\) of characteristic \(0\), then the inclusion of the generic point \(\iota\colon \Spec \kk(X) \to X\) gives an injective homomorphism \(\iota^*\colon \Br  X\hookrightarrow\Br  \kk(X)\).
					\item The Brauer group is a birational invariant of smooth projective varieties, i.e., if \(f\colon X\dashrightarrow Y\) is a birational map between two smooth projective varieties, then \(f^*\colon \Br  \kk(Y)\to \Br  \kk(X)\) sends \(\Br  Y\) isomorphically to \(\Br  X\).
					\item If \(X\) is a smooth projective variety and let \(U\subset X\) be an open subset such that \(\codim(X\setminus U)\geq 2\), then \(\Br  X = \Br  U\).\label{part:codim2}
				\end{enumerate}
			\end{mythm}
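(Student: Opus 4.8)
The plan is to dispatch the five assertions in order, treating (1) and (2) as formal consequences of étale cohomology and reducing (4) and (5) to a single deep input about the Brauer groups of regular schemes that also underlies (3).

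For (1), I would use that the small étale site of \(\Spec F\) is equivalent to the category of discrete \(G_F\)-sets, under which the sheaf \(\mathbb{G}_m\) corresponds to the Galois module \((\Fsep)^{\times}\) and the global-sections functor corresponds to taking \(G_F\)-invariants. Comparing derived functors yields a natural isomorphism \(\HH^i_{\et}(\Spec F, \mathbb{G}_m)\cong \HH^i(G_F, (\Fsep)^{\times})\) for all \(i\); taking \(i=2\) and invoking Theorem~\ref{thm:BrF} identifies \(\Br F\) with \(\Br \Spec F\). For (2), functoriality is inherited directly from that of étale cohomology: a morphism \(f\colon X\to Y\) induces pullback maps on all cohomology groups, and since \(f^{-1}\mathbb{G}_m = \mathbb{G}_m\) one obtains \(f^*\colon\HH^2_{\et}(Y,\mathbb{G}_m)\to \HH^2_{\et}(X,\mathbb{G}_m)\).

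The crux is (3). Here I would invoke the theorem of Auslander--Goldman and Grothendieck that for a regular integral scheme \(X\) with function field \(K\) the restriction \(\Br X\to \Br K\) is injective, and moreover identifies \(\Br X\) with the subgroup \(\bigcap_{x\in X^{(1)}} \Br \calO_{X,x}\subset \Br K\), the intersection ranging over codimension-one points (see~\cite{CTS-BrauerBook}). Smoothness over a field makes \(X\) regular, and the characteristic-zero hypothesis guarantees (as noted in the footnote above) that all definitions of the Brauer group coincide, so the theorem applies. This ``purity'' package is the genuinely nonformal ingredient, and I expect it to be the main obstacle: establishing it rests on the structure theory of the Brauer group of regular local rings, classically due to Grothendieck in low dimension and in general a consequence of Gabber's absolute purity, though in characteristic zero one may instead lean on resolution of singularities.

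Granting the intersection description, (5) is immediate: for \(U\subset X\) open with \(\codim(X\setminus U)\geq 2\) we have \(X^{(1)} = U^{(1)}\) and \(\calO_{X,x} = \calO_{U,x}\) for each such point, so the two intersections computing \(\Br X\) and \(\Br U\) inside \(\Br \kk(X) = \Br \kk(U)\) coincide. For (4), I would resist the tempting but false shortcut that a birational map of smooth projective varieties is an isomorphism in codimension one (blow-ups along smooth centers already fail this). Instead I would show that \(\Br X\), viewed inside \(\Br \kk(X)\), equals the unramified subgroup \(\bigcap_v \Br\calO_v\), the intersection taken over all divisorial valuations \(v\) of \(\kk(X)\) trivial on the base field; this subgroup depends only on \(\kk(X)\) and is therefore manifestly a birational invariant. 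The inclusion \(\bigcap_v \Br \calO_v \subseteq \Br X\) is clear since the points of \(X^{(1)}\) sit among all divisorial valuations; for the reverse, given a divisorial \(v\) I would realize it on a smooth projective model \(W\) dominating \(X\) (using resolution of singularities in characteristic zero), pull \(\alpha\in \Br X\) back to \(\Br W\subseteq \Br\calO_v\) via (2), and observe that this pullback restricts to \(\alpha\) at the generic point by (3), so that \(\alpha\in\Br\calO_v\). Reconciling \(f^*\) with the function-field identification, again through (3), then yields the asserted isomorphism \(\Br Y\xrightarrow{\sim}\Br X\).
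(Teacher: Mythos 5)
Your proposal is correct, and it is essentially the argument the paper has in mind: the paper itself offers no proof of parts (3)--(5) beyond citing \cite{CTS-BrauerBook}*{Thm. 3.5.5, Cor. 6.2.11, Thm. 3.7.6}, and those references establish exactly the package you describe --- injectivity of \(\Br X\to \Br \kk(X)\) for regular integral schemes, the purity identification \(\Br X=\bigcap_{x\in X^{(1)}}\Br\calO_{X,x}\), and birational invariance via the unramified Brauer group \(\bigcap_v \Br\calO_v\). You correctly isolate purity as the one genuinely nonformal input (and, like the paper, take it as a black box), and you rightly sidestep the false shortcut of pretending a birational map of smooth projective varieties is an isomorphism in codimension one; the divisorial-valuation argument for (4), realizing each valuation on a smooth projective model dominating \(X\) and pulling back, is the standard and correct way to close that loop in characteristic \(0\).
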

			\begin{proof}
				The first two properties follow from the definition together with properties of \'etale cohomology.  For the last three properties, see~\cite{CTS-BrauerBook}*{Thm. 3.5.5},~\cite{CTS-BrauerBook}*{Cor. 6.2.11}, and~\cite{CTS-BrauerBook}*{Thm. 3.7.6}, respectively.  
			\end{proof}

			See~\citelist{\cite{CTS-BrauerBook}\cite{Poonen-Qpts}*{Section 6.6}} for more details on the Brauer group of a scheme.

		%%%%%%%%%%%%%%%%%%%%%%%%%%%%%%%%%%%%%%%%%%%%%%%%%%%%%%%%%%%%%%%%%%%%%%%%
		\subsubsection{An example of a Brauer-Manin obstruction using quadratic reciprocity}
		%%%%%%%%%%%%%%%%%%%%%%%%%%%%%%%%%%%%%%%%%%%%%%%%%%%%%%%%%%%%%%%%%%%%%%%%

			Before introducting the full Brauer-Manin obstruction in the next section, we first give an extended treatment of a particular type of Brauer-Manin obstruction using conics.

			Let \(X\) be a smooth projective variety over \(\Q\) and assume that we have a conic bundle \(\pi\colon \calC\to X\) over \(X\) where the morphism is smooth.  Then for every extension \(F/\Q\), we have a set map
			\[
				X(F)\to \{\textup{conics over }F\}, \quad x\mapsto \calC_x := \pi^{-1}(x).	
			\]
			In particular, by applying this to \(F = \Q\) and \(F = \Q_v\) for all places \(v\), we obtain the following commutative diagram.
			\begin{equation}\label{diag:ConicsTake0}
				\begin{tikzcd}
					X(\Q) \arrow[r, hook] \arrow[d] & X(\A_{\Q}) = \prod_v X(\Q_v) \arrow[d] \\
					\{\textup{conics over }\Q\} \arrow[r, hook]  & \prod_v\{\textup{conics over }\Q_v\},
				\end{tikzcd}
			\end{equation}
			where the bottom inclusion is given by diagonal basechange. 

			To see how we can use the above diagram to define an obstruction set we must first explore the following rephrasing of quadratic reciprocity that we alluded to in Example~\ref{ex:QR-Br}.
			\begin{mythm}[Rephrasing of quadratic reciprocity I]\label{thm:QRI}\hfill

			\noindent Let \(C: V(ax^2 + by^2 = z^2)\subset \PP^2_{\Q}\) be a smooth conic over \(\Q\).  Then
				\[
					\{v\in \Omega_{\Q}: C(\Q_v) = \emptyset\}	
				\]
				is a finite set with \emph{even} cardinality.
			\end{mythm}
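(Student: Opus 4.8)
The plan is to translate the statement about local solubility of the conic into a statement about the local invariants of a single global Brauer class, and then to read off both conclusions from the fundamental exact sequence of global class field theory (Theorem~\ref{thm:CFT}).

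First I would identify the relevant Brauer class. By Exercise~\ref{exer:TrivialBrauerClass}, for any field \(F\) of characteristic \(\neq 2\) we have \(C(F)\neq\emptyset\) if and only if the class \((a,b) = [\calA_{a,b}]\in\Br F\) is trivial. Applying this with \(F = \Q_v\) at each place, the set in question becomes
\[
	\{v\in\Omega_{\Q}: C(\Q_v) = \emptyset\} = \{v\in\Omega_{\Q}: (a,b)\neq 0 \text{ in } \Br \Q_v\},
\]
which, by the definition of the Hilbert symbol recalled in Example~\ref{ex:QR-Br}, is exactly the set of places where \((a,b)_v = -1\). Now \((a,b)\) is a \emph{single} element of \(\Br\Q\), and the exact sequence~\eqref{eq:BrExact} sends it into the direct sum \(\oplus_v \Br\Q_v\). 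By the very meaning of the direct sum, its image has only finitely many nonzero components; since each \(\inv_v\) is injective, the local class \((a,b)\in\Br\Q_v\) is nonzero for only finitely many \(v\), giving the claimed finiteness.

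For the evenness I would combine two inputs. On one hand, \(\calA_{a,b}\) has order dividing \(2\) in \(\Br\Q\) (Example~\ref{ex:Quaternion}), so each local class \((a,b)\in\Br\Q_v\) is \(2\)-torsion; by injectivity of \(\inv_v\) this forces \(\inv_v(a,b)\in\{0,\tfrac12\}\), with the value \(\tfrac12\) occurring precisely at the places where \(C(\Q_v)=\emptyset\). On the other hand, exactness of~\eqref{eq:BrExact} at the middle term gives the sum formula \(\sum_v \inv_v(a,b) = 0\) in \(\Q/\Z\). If \(n\) is the cardinality of the finite set of bad places, then this sum equals \(n\cdot\tfrac12\), and \(n\cdot\tfrac12 = 0\) in \(\Q/\Z\) exactly when \(n\) is even.

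I do not expect a genuine obstacle here: the statement is essentially a repackaging of the product formula \(\prod_v (a,b)_v = 1\) from Example~\ref{ex:QR-Br}, and all the arithmetic content has already been absorbed into Theorem~\ref{thm:CFT}. The only points that require care are the correct identification of ``\(C(\Q_v)=\emptyset\)'' with nontriviality of the local Brauer class via Exercise~\ref{exer:TrivialBrauerClass}, and the elementary observation that a \(2\)-torsion class whose local invariants sum to zero in \(\Q/\Z\) must be nontrivial at an even number of places.
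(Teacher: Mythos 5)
Your proof is correct and follows essentially the same route the paper takes: the evenness is exactly the product formula $\sum_v \inv_v(\calA_{a,b}) = 0$ from Example~\ref{ex:QR-Br} together with the observation that each local invariant of a $2$-torsion class lies in $\{0,\tfrac12\}$. The only (harmless) divergence is on finiteness: the paper invokes Proposition~\ref{prop:LocalSolubilityFiniteComputation} (good reduction plus the Hasse--Weil bounds), whereas you read finiteness off the direct sum $\oplus_v \Br \Q_v$ in the exact sequence~\eqref{eq:BrExact}; both are valid, and yours has the small advantage of keeping the entire argument inside Theorem~\ref{thm:CFT}.
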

			The finiteness is a special case of Proposition~\ref{prop:LocalSolubilityFiniteComputation}; the surprising fact is that the cardinality is always even!  Why is this surprising?  Let's rephrase the theorem again.
			\begin{mythm}[Rephrasing of quadratic reciprocity II]\label{thm:QRII}
				\hfill

			\noindent
				Let \(C: V(ax^2 + by^2 = z^2)\subset \PP^2_{\Q}\) be a smooth conic over \(\Q\).  Then
				\[
					\#\{p \textup{ prime}: C(\Q_p) = \emptyset\} \equiv 1 \bmod 2\; \Longleftrightarrow \;a<0 \textup{ and } b<0.
				\]
			\end{mythm}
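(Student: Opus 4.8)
The plan is to deduce this from Theorem~\ref{thm:QRI} by isolating the contribution of the unique archimedean place. Recall that \(\Omega_{\Q}\) consists of the finite primes \(p\) together with a single archimedean place \(\infty\), with \(\Q_{\infty} = \R\). Thus the set appearing in Theorem~\ref{thm:QRI} decomposes as
\[
	\{v\in\Omega_{\Q}: C(\Q_v)=\emptyset\} = \{p\text{ prime}: C(\Q_p)=\emptyset\}\ \sqcup\ \bigl(\{\infty\}\text{ if }C(\R)=\emptyset\bigr).
\]
Taking cardinalities and reducing modulo \(2\), Theorem~\ref{thm:QRI} tells us that the left-hand side is even, so I would conclude immediately that
\[
	\#\{p\text{ prime}: C(\Q_p)=\emptyset\}\equiv 1\bmod 2 \iff C(\R)=\emptyset.
\]
This reduces the entire statement to determining exactly when the conic fails to have a real point.

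The second step is the real solubility analysis, which is elementary. Since \(C\) is smooth we have \(a,b\in\Q^{\times}\). If \(a>0\), then \([1:0:\sqrt{a}]\in C(\R)\), and if \(b>0\) then \([0:1:\sqrt{b}]\in C(\R)\); in either case \(C(\R)\neq\emptyset\). Conversely, if \(a<0\) and \(b<0\), then for real \(x,y\) we have \(ax^2+by^2\leq 0\), with equality only when \(x=y=0\); since this must equal \(z^2\geq 0\), any real solution forces \(x=y=z=0\), which is not a projective point. Hence \(C(\R)=\emptyset\) precisely when \(a<0\) and \(b<0\). Combining this equivalence with the parity reduction of the first step yields the theorem.

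I do not expect a genuine obstacle here: the content has been front-loaded into Theorem~\ref{thm:QRI}, whose proof in turn rests on the product formula \(\prod_v (a,b)_v = 1\) of Example~\ref{ex:QR-Br} together with the identification \(C(\Q_v)=\emptyset \iff (a,b)_v = -1\). The only place requiring a moment's care is making sure the archimedean place is handled consistently with this identification, namely that \((a,b)_{\infty}=-1\) if and only if \(a<0\) and \(b<0\) — which is exactly the real solubility computation above, now read through the Hilbert symbol. Once that bookkeeping is in place, the proof is a one-line parity argument.
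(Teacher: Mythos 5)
Your argument is correct and is exactly the route the paper intends: Theorem~\ref{thm:QRII} is presented as equivalent to Theorem~\ref{thm:QRI}, with the equivalence left as an exercise, and the content of that equivalence is precisely your two steps --- splitting off the unique archimedean place of \(\Q\) and checking that \(C(\R)=\emptyset\) if and only if \(a<0\) and \(b<0\). Nothing further is needed.
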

			\begin{myex} 
				Verify that Theorems~\ref{thm:QRI} and~\ref{thm:QRII} are indeed equivalent.
			\end{myex}
			Let's unravel the statement of Theorem~\ref{thm:QRII}.  It says that if you know whether \(C\) is soluble over \(\Q_p\) for \emph{all} primes \(p\), that tells you something about the positivity of \(a\) and \(b\).  This is a strong and surprising statement! There is \emph{no} notion of positivity in the \(p\)-adics.  Positivity is a concept from the real numbers.  But quadratic reciprocity implies that \(p\)-adic information for \emph{all} the primes \(p\) {can} detect some positivity information. 
			% {\fontspec{Symbola}\symbol{"1F92F}} 
            \texttt{:exploding\_head:}

            \medskip

			Now we return to the commutative diagram~\eqref{diag:ConicsTake0} and apply Theorem~\ref{thm:QRI}.  Then we obtain the following.
			\[
				\begin{tikzcd}
					X(\Q) \arrow[rr, hook] \arrow[d] && \prod_v X(\Q_v) \arrow[d] \\
					\{\textup{conics}/\Q\} \arrow[r, hook] 
					& 
					\left\{(C_v) : 
					\#\{v : C_v(\Q_v)=\emptyset\}\equiv 0 \bmod 2
					\right\} \arrow[r, hook]
					 & \prod_v\{\textup{conics}/\Q_v\}
				\end{tikzcd}
			\]
			The intermediate set on the bottom row provides a construction of an intermediate set on the top row.
			\[
				\begin{tikzcd}
					X(\Q) \arrow[r, hook] \arrow[d] &
					\left\{(x_v) : \#\{v : \calC_{x_v}(\Q_v)=\emptyset\}\equiv 0 \bmod 2\right\}\arrow[r, hook]\ar[d]
					&  \prod_v X(\Q_v) \arrow[d] \\
					\{\textup{conics}/\Q\} \arrow[r, hook] 
					& 
					\left\{(C_v) : 
					\#\{v : C_v(\Q_v)=\emptyset\}\equiv 0 \bmod 2
					\right\} \arrow[r, hook]
					 & \prod_v\{\textup{conics}/\Q_v\}
				\end{tikzcd}
			\]
			And thus we obtain a refined obstruction set!

			To conclude this section, let us see how this can be phrased in terms of the Brauer group.  By Example~\ref{ex:Quaternion} and Exercise~\ref{exer:TrivialBrauerClass}, we have seen that we can associate a Brauer class to a conic.  Further, we saw in Example~\ref{ex:QR-Br} that quadratic reciprocity is encoded by the \(2\)-torsion in the fundamental exact sequence of global class field theory.  Thus, we may express the top row of the above diagram in the following, equivalent, way:
			\[
				\begin{tikzcd}
					X(k) \arrow[r, hook]  &
					\left\{(x_v) :\sum_v \inv_v[\calC_{x_v}]=0\in \Q/\Z\right\}\arrow[r, hook]
					& X(\A_k) = \prod_v X(k_v).
				\end{tikzcd}
			\]
			In fact, we can work with the Brauer group from the beginning.  Given a conic bundle \(\pi\colon \calC \to X\) where the morphism is smooth, one can associate a Brauer class \(\calA := [\calC]\in \Br  X\).  Further, given an extension \(F/k\) and a map \(x\colon \Spec F \to X\) (equivalently a point \(x\in X(F)\)), the Brauer class \(x^*\calA\) coming from Theorem~\ref{thm:BrProps}\eqref{part:functoriality} is exactly the class \([\calC_x]\) given by the fiber of \(\pi\) above \(x\).

		%%%%%%%%%%%%%%%%%%%%%%%%%%%%%%%%%%%%%%%%%%%%%%%%%%%%%%%%%%%%%%%%%%%%%%%%
		\subsubsection{The Brauer-Manin set}
		%%%%%%%%%%%%%%%%%%%%%%%%%%%%%%%%%%%%%%%%%%%%%%%%%%%%%%%%%%%%%%%%%%%%%%%%
			In the previous section, we saw how a conic bundle \(\pi\colon \calC\to X\) carves out a refined obstruction set and how this can be phrased in terms of the Brauer group.  In this section, we will show how an arbitrary element of the Brauer group of a variety \(X\) (i.e., not just those Brauer classes corresponding to conic bundles) carves out a subset of \(X(\mathbb{A}_k)\) that contains the \(k\)-rational points.  This subset was introduced by Manin~\cite{Manin-ICM} and is known as the \defi{Brauer-Manin set}.

			For any extension \(F/k\), the functoriality of the Brauer group (Theorem~\ref{thm:BrProps}\eqref{part:functoriality})
			gives a pairing
			\[
				\Br  X\times X(F) \to \Br  F, (\alpha, x) \mapsto \langle\alpha, x\rangle := x^*\alpha,	
			\]
			where we view the point \(x\in X(F)\) as a map \(x\colon \Spec F \to X\).

			If \(X\) is projective, then \(X(\mathbb{A}_k) = \prod_v X(k_v)\) and so we may apply these pairings componentwise to obtain a pairing
			\[
			\Br  X\times X(\mathbb{A}_k) \to \prod_v \Br  k_v.	
			\]
			Using integral models and properties of Brauer groups of local fields, one can show that the image of this pairing actually lands in \(\bigoplus_v\Br  k_v\), i.e., that for every \(\alpha\in \Br  X\) there exists a finite set \(S_{\alpha}\subset \Omega_k\) such that \(\langle\alpha, -\rangle\colon X(k_v)\to\Br  k_v\) is identically \(0\) for all \(v\notin S_{\alpha}\)~\cite{Poonen-Qpts}*{Prop. 8.2.1}.  Therefore, we have the following commutative diagram.
			\[
			    \begin{tikzcd}
					\Br  X\times X(k) \arrow[r, hook] \arrow[d] & \Br  X\times X(\A_k) \arrow[d] \\
					\Br  k \arrow[r]  & \oplus_v\Br  k_v
				\end{tikzcd}
			\]
			Recall from Theorem~\ref{thm:CFT}, the bottom vertical arrow fits into an exact sequence, so we may extend our diagram as follows.
			\[
			    \begin{tikzcd}
					& \Br  X\times X(k) \arrow[r, hook] \arrow[d] & \Br  X\times X(\A_k) \arrow[d] \arrow[rrd]\\
					0 \arrow[r]&\Br  k \arrow[r]  & \oplus_v\Br  k_v\arrow[rr, "\sum\inv_v"]&& \Q/\Z\arrow[r] & 0
				\end{tikzcd}
			\]
			Observe that since the bottom row is a complex and the diagram commutes, then the induced map \(\Br  X\times X(k)\to \Q/\Z\) is identically \(0\).  In particular, for all \(\alpha\in \Br  X\), the set \(X(k)\) is contained in the set of \defi{adelic points orthogonal to \(\alpha\)}, i.e.,
			\[
				X(k)\subset X(\A_k)^{\alpha} := \{(x_v)\in X(\A_k) : \langle\alpha, (x_v)\rangle = 0\}.
			\]
			Taking the intersection \(X(\A_k)^{\alpha}\) for all \(\alpha\), we obtain the \defi{Brauer-Manin set}
			\begin{equation}\label{eq:DefOfBM}
				X(\A_k)^{\Br} := \bigcap_{\alpha\in \Br  X}X(\A_k)^{\alpha} = \left\{(x_v)\in X(\A_k) : \langle\alpha, (x_v)\rangle = 0\,\forall\alpha\in\Br  X\right\}.
			\end{equation}

			\begin{myexer}\label{exer:BrmodBr0}
				 Let \(X\) be a smooth projective geometrically integral variety over a number field \(k\) and let \(\pi\) denote the structure morphism \(\pi\colon X\to \Spec k\).  
				\begin{enumerate}[itemsep=12pt]
					\item Let \(\alpha_0\in \Br  k\).  Show that \(X(\A_k)^{\pi^*\alpha_0} = X(\A_k)\).
					\item Let \(\alpha,\beta\in \Br  X\).  Show that
					\[
						X(\A_k)^{\alpha}\cap X(\A_k)^{\beta} = \bigcap_{\gamma \in \{\alpha^i\beta^j : i,j\in \Z\}} X(\A_k)^{\gamma}
					\]
				\end{enumerate}			
			\end{myexer}
			These exercises show that the Brauer-Manin set depends only on the quotient \(\Br  X/\im(\pi^*\colon \Br  k \to \Br  X)\).  We write \(\Br_0 X := \im(\pi^*\colon \Br  k \to \Br  X)\) and refer to elements in \(\Br_0 X\) as \defi{constant Brauer classes}. 

		%%%%%%%%%%%%%%%%%%%%%%%%%%%%%%%%%%%%%%%%%%%%%%%%%%%%%%%%%%%%%%%%%%%%%%%%
		\section{Computing the Brauer-Manin obstruction}
		%%%%%%%%%%%%%%%%%%%%%%%%%%%%%%%%%%%%%%%%%%%%%%%%%%%%%%%%%%%%%%%%%%%%%%%%

			We have successfully defined a refined obstruction set, the Brauer-Manin set
			\[
				X(k)\subset X(\A_k)^{\Br}\subset X(\A_k),	
			\]
			but we have yet to see whether this is \emph{useful}.  Utility has a theoretical component and a practical or computational component.
			\begin{enumerate}
				\item {[Theory]} Do there exist varieties where \(X(\A_k)^{\Br} = \emptyset\) yet \(X(\A_k)\neq \emptyset\)?
				\item {[Practice/Computation]} Can we compute the Brauer-Manin set?
			\end{enumerate}

			The answer to the first question is a resounding \emph{YES!} In fact, there are many such examples.  When Manin introduced this obstruction, he showed that several of the known failures of the local-to-global principle could be explained by the Brauer-Manin obstruction.  Since then, several more examples have been constructed, which together lead to the expectation that if a class of varieties can have nontrivial Brauer group, it is likely that there exists a variety in that class with a Brauer-Manin obstruction to the existence of rational points.\footnote{Unless there is an ``obvious'' reason why not, e.g., some classes of varieties always have a rational point, like in the case of del Pezzo surfaces of degree \(1\).}

			The answer to the second question is more mixed.  On the one hand, the Brauer-Manin set \emph{has} been computed in several examples, and for some classes of varieties, it is more or less standard to do so (e.g., conic bundles over \(\mathbb{P}^1\)~\cite{CTS-BrauerBook}*{Prop. 11.3.4}). On the other hand, there is no general effectivity result for the Brauer-Manin set, nor is there an approach that is known to work in full generality.  The best effectivity results to date are due to Kresch and Tschinkel~\cites{KT-effectivityAlg, KT-effectivitySurfaces}.  They prove that if \(\Pic \Xbar\) is torsion free, then the \defi{algebraic Brauer-Manin set} \(X(\A_k)^{\Br_1}\) is effectively computable.  The algebraic Brauer-Manin set is, by definition, the set of adelic points orthogonal to those Brauer classes that become trivial after passage to the algebraic closure; this can be (and often is) larger than the Brauer-Manin set. Kresch and Tschinkel can improve their results for surfaces. In that case (and still under the assumption that \(\Pic\Xbar\) is torsion-free), they prove that for any positive integer \(n\), there is an effectively computable set \(X_n\) such that
			\[
				X(\A_k)^{\Br}\subset X_n\subset X(\A_k)^{\Br  X[n]}.
			\]
			In particular, if there exists an effective bound on the exponent of \(\Br  X/\Br_0 X\), then this would imply that the Brauer-Manin set is effectively computable for surfaces with torsion-free geometric Picard group.

			Despite the lack of general algorithms for computing the Brauer-Manin obstruction, there is a general framework that is often helpful for at least computing the group structure of \(\Br  X/\Br_0 X\) as an abstract torsion abelian group.

		%%%%%%%%%%%%%%%%%%%%%%%%%%%%%%%%%%%%%%%%%%%%%%%%%%%%%%%%%%%%%%%%%%%%%%%%
		\subsection{The Hochschild-Serre spectral sequence and a filtration of the Brauer group}
		%%%%%%%%%%%%%%%%%%%%%%%%%%%%%%%%%%%%%%%%%%%%%%%%%%%%%%%%%%%%%%%%%%%%%%%%
		We leverage the Hochschild-Serre spectral sequence in \'etale cohomology (applied to the Galois cover \(\Xbar\to X\) and the sheaf \(\mathbb{G}_m\)):
			\begin{equation}\label{eq:HS}
				\HH^p\left(G_k, H^q_{\et}(\Xbar, \mathbb{G}_m)\right)\Longrightarrow
				H^{p+q}_{\et}(X, \mathbb{G}_m).
			\end{equation}
			to give a filtration of the Brauer group.  The exact sequence of low degree terms (see~\cite{Poonen-Qpts}*{Prop. 6.7.1}) is
			\[
				0 \to \Pic X \to (\Pic \Xbar)^{G_k}\to \Br  k\to\ker\left(\Br   X \to \Br  \Xbar\right) \to \HH^1(G_k, \Pic \Xbar) \to 0;
			\]
			(here we use the assumption that \(k\) is a number field and so \(\HH^3(G_k, \kbar^{\times}) = 0\)~\cite{Poonen-Qpts}*{Remark 6.7.10} to obtain the rightmost \(0\)).
			Using the definition of \(\Br_0X\) (see the paragraph after Exercise~\ref{exer:BrmodBr0}), we obtain a short exact sequence
			\begin{equation}\label{eq:Br1}
				0 \to \Br_0 X\to \Br_1 X := \ker\left(\Br   X \to \Br  \Xbar\right) \to \HH^1(G_k, \Pic \Xbar) \to 0.
			\end{equation}
			The subgroup \(\Br_1 X\) is called the \defi{algebraic Brauer group} of \(X\), and the quotient \(\Br  X/\Br_1 X\) is called the \defi{transcendental Brauer group} of \(X\).  

			The transcendental Brauer group can also be studied using the Hochschild-Serre spectral sequence.  Indeed, the higher degree terms yield the following exact sequence (again using the assumption that \(k\) is a number field and so \(\HH^3(G_k, \kbar^{\times}) = 0\))~\cite{CTS-BrauerBook}*{5.24}
			\begin{equation}\label{eq:BrmodBr1}
			0 \to \frac{\Br  X}{\Br_1 X} \to \left(\Br  \Xbar\right)^{G_k} \to \HH^2(G_k, \Pic \Xbar).	
			\end{equation}
			Thus, if we have a good enough understanding of \(\Pic \Xbar\) and \(\Br  \Xbar\) as Galois modules, we can leverage~\eqref{eq:Br1} and~\eqref{eq:BrmodBr1} to compute the Brauer group modulo constants.  See~\cite{CTS-BrauerBook}*{Section 5.4} for more details on these approaches.

		%%%%%%%%%%%%%%%%%%%%%%%%%%%%%%%%%%%%%%%%%%%%%%%%%%%%%%%%%%%%%%%%%%%%%%%%
		\subsection{Properties of local evaluation maps}\label{sec:LocalEvaluation}
		%%%%%%%%%%%%%%%%%%%%%%%%%%%%%%%%%%%%%%%%%%%%%%%%%%%%%%%%%%%%%%%%%%%%%%%%
			If we have successfully computed the Brauer group (or the quotient \(\Br  X/\Br_0X\)), then the task that remains is describing the set \(X(\mathbb{A}_k)^{\alpha}\) for \(\alpha\) a set of generators of \(\Br  X/\Br_0 X\).

			Given an \(\alpha\in \Br  X\), there are some theoretic results that simplify calculation of the map \(\langle\alpha_v, -\rangle \colon X(k_v) \to \Q/\Z\).
			\begin{myprop}\label{prop:LocalEvaluation}
				Let \(X\) be a smooth projective variety over a number field \(k\), let \(v\) be a place of \(k\) and let \(\alpha\in \Br  X\).  
				\begin{enumerate}
					\item The map \(\langle\alpha_v, -\rangle \colon X(k_v) \to \Q/\Z\) is locally constant.~\cite{Poonen-Qpts}*{Prop. 8.2.9(a)}
					\item If \(\alpha_v\in \Br_0 X_v\), then the map \(\langle\alpha_v, -\rangle \colon X(k_v) \to \Q/\Z\) is constant.
					\item If \(v\) is a nonarchimedean place of good reduction (i.e., there exists a smooth proper model \(\mathscr{X} \to \OO_v\)) and \(\alpha_{k_v^{nr}} = 0\), then the map \(\langle\alpha_v, -\rangle \colon X(k_v) \to \Q/\Z\) is constant.~\cite{CTS-GoodReduction}*{Lemma 2.2(ii)}\label{part:LocallyConstant}
				\end{enumerate}
			\end{myprop}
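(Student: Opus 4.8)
The plan is to treat the three parts separately, with essentially all of the work concentrated in~(3). For~(1), I would establish the stronger claim that the evaluation map \(X(k_v)\to\Br k_v\) is continuous, where \(X(k_v)\) carries its \(v\)-adic topology and \(\Br k_v\) is discrete (it injects into \(\Q/\Z\) by Theorem~\ref{thm:CFT}). Fix \(x_0\in X(k_v)\). Since \(X\) is smooth, there is a Zariski-open \(U\ni x_0\) on which \(\alpha|_U\) is the class of an Azumaya algebra \(\calA\). For \(x\) in a small \(v\)-adic neighborhood \(W\subset U(k_v)\) of \(x_0\), the fibers \(\calA_x\) are central simple \(k_v\)-algebras of fixed dimension whose structure constants vary continuously in \(x\); because \(\alpha\) is torsion (Theorem~\ref{thm:BrF}), the invariant \(\inv_v[\calA_x]\) takes values in a fixed finite subgroup \(\frac1n\Z/\Z\), and splitting is a \(v\)-adically open-and-closed condition (detected via Hensel's lemma), so \(\inv_v[\calA_x]\) is locally constant. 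Equivalently, one may lift \(\alpha\) through the Kummer sequence to \(\HH^2_{\et}(U,\mu_n)\) and invoke the standard continuity of the evaluation pairing with finite coefficients. Either way, \(\langle\alpha_v,-\rangle\) is constant on \(W\).

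Part~(2) is the local avatar of Exercise~\ref{exer:BrmodBr0}(1). Writing \(\alpha_v=\pi_v^*\beta\) with \(\beta\in\Br k_v\) and \(\pi_v\colon X_{k_v}\to\Spec k_v\), I regard each \(x\in X(k_v)\) as a section of \(\pi_v\); then functoriality (Theorem~\ref{thm:BrProps}\eqref{part:functoriality}) gives
\[
\langle\alpha_v,x\rangle=x^*\pi_v^*\beta=(\pi_v\circ x)^*\beta=\beta,
\]
since \(\pi_v\circ x=\mathrm{id}_{\Spec k_v}\). As this is independent of \(x\), the map is constant.

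For~(3) the plan is to use the smooth proper model \(\mathscr X\to\Spec\OO_v\) to convert the evaluation into a computation on the special fiber \(\mathscr X_s\). By properness together with smoothness and Hensel's lemma, every \(k_v\)-point extends uniquely to an \(\OO_v\)-point, so \(X(k_v)=\mathscr X(\OO_v)\), and reduction defines \(r\colon\mathscr X(\OO_v)\to\mathscr X_s(\F_v)\). I would then argue in two steps. First, purity for the Brauer group of the regular scheme \(\mathscr X\) along the smooth divisor \(\mathscr X_s\) provides a residue map \(\partial\colon\Br X\to\HH^1(\mathscr X_s,\Q/\Z)\) whose kernel is the image of \(\Br\mathscr X\); since a section of a smooth morphism meets \(\mathscr X_s\) transversally, residues commute with pullback along \(x\), giving
\[
\inv_v\langle\alpha_v,x\rangle=\partial_{k_v}(x^*\alpha)=(r(x))^*\partial(\alpha),
\]
where \(\partial_{k_v}\colon\Br k_v\xrightarrow{\ \sim\ }\HH^1(\F_v,\Q/\Z)\) is the local residue. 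Thus the evaluation factors through the reduction \(r(x)\).

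Second, I would show \(\partial(\alpha)\) is a \emph{constant} character. Passing to the strict henselization (residue field \(\overline{\F}_v\)) and using that residues commute with unramified base change, the hypothesis \(\alpha_{k_v^{nr}}=0\) forces the image of \(\partial(\alpha)\) in \(\HH^1(\mathscr X_{s,\overline{\F}_v},\Q/\Z)\) to vanish. As \(\mathscr X_s\) is smooth, proper, and geometrically connected over \(\F_v\), inflation--restriction identifies the kernel of \(\HH^1(\mathscr X_s,\Q/\Z)\to\HH^1(\mathscr X_{s,\overline{\F}_v},\Q/\Z)\) with \(\pi_s^*\HH^1(\F_v,\Q/\Z)\), where \(\pi_s\colon\mathscr X_s\to\Spec\F_v\); hence \(\partial(\alpha)=\pi_s^*\chi\) for a single \(\chi\in\HH^1(\F_v,\Q/\Z)\). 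Combining with the previous display, \((r(x))^*\pi_s^*\chi=(\pi_s\circ r(x))^*\chi=\chi\) for every \(x\) (as \(\pi_s\circ r(x)=\mathrm{id}\)), so \(\inv_v\langle\alpha_v,x\rangle=\chi\) is constant in \(x\) (and is nonzero precisely when \(\alpha\) fails to extend to \(\Br\mathscr X\), consistent with the constant classes of part~(2)). The main obstacle is the first step: making the residue--specialization compatibility precise requires invoking purity for Brauer groups of regular schemes and checking transversality of the section, and it demands care with the \(p\)-primary part for \(p=\Char\F_v\), where purity is subtler; restricting to the prime-to-\(p\) part and handling the \(p\)-part separately via the good-reduction hypothesis is the cleanest route.
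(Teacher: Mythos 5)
The paper does not actually prove this proposition; it delegates part~(1) to \cite{Poonen-Qpts}*{Prop.~8.2.9(a)} and part~(3) to \cite{CTS-GoodReduction}*{Lemma 2.2(ii)}, so any complete argument is necessarily ``different'' from what is on the page. Your treatments of (1) and (2) are correct and are essentially the standard proofs behind the citation: represent \(\alpha\) by an Azumaya algebra (or a class with \(\mu_n\)-coefficients) on a Zariski neighborhood and use rigidity of central simple algebras under small \(v\)-adic perturbation of structure constants for (1), and the identity \(x^*\pi_v^*\beta=(\pi_v\circ x)^*\beta=\beta\) for (2). For (3), your residue strategy is a legitimate route and is correct prime to \(p=\Char\F_v\): the compatibility \(\partial_{k_v}(x^*\alpha)=(r(x))^*\partial(\alpha)\) does hold because the section meets \(\mathscr X_s\) with multiplicity one, and the inflation--restriction step correctly identifies \(\ker\bigl(\HH^1(\mathscr X_s,\Q/\Z)\to\HH^1(\mathscr X_{s,\overline{\F}_v},\Q/\Z)\bigr)\) with the characters pulled back from \(\F_v\) (do note explicitly that the special fiber of a smooth proper model of a geometrically integral variety is geometrically connected, which is what makes \(\HH^0(\mathscr X_{s,\overline{\F}_v},\Q/\Z)=\Q/\Z\)).

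The genuine gap is the one you flag but do not close: your first step needs purity for the Brauer group of the regular scheme \(\mathscr X\) along \(\mathscr X_s\), which is classical only away from the residue characteristic, and ``handle the \(p\)-part separately via the good-reduction hypothesis'' is a placeholder rather than an argument. Since \(k_v\) has characteristic \(0\), the group \(\Br X\) certainly contains \(p\)-primary classes satisfying the hypotheses, so this case cannot be discarded. There are two standard ways to finish: invoke absolute purity for the Brauer group of the regular scheme \(\mathscr X\) (Gabber; see also \v{C}esnavi\v{c}ius), which supplies the residue sequence for the full torsion group; or follow the cited lemma of Colliot-Th\'el\`ene and Skorobogatov, whose argument shows directly that \(\ker\bigl(\Br X\to\Br X_{k_v^{nr}}\bigr)\) is contained in the subgroup generated by the image of \(\Br\mathscr X\) and the constant classes, after which the conclusion is immediate from \(\Br\OO_v=0\) (evaluation of a class from \(\Br\mathscr X\) at an \(\OO_v\)-point is zero) together with your part (2). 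Either repair makes your proof complete; as written, part (3) is only proved for the prime-to-\(p\) subgroup.
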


			The vast majority of examples in the literature with \(X(\A_k)^{\{\alpha_i\}}=\emptyset\) actually have \(\langle(\alpha_i)_v, -\rangle \colon X(k_v) \to \Q/\Z\) constant for all \(i\) and for all places \(v\).  Based on this, one might be hopeful that constant evaluation everywhere is common, which would greatly simplify computation of the Brauer-Manin set.  However, these examples are misleading!  The examples in the literature were typically constructed to ensure this happens exactly so that the authors could prove that \(X(\A_k)^{\{\alpha_i\}}=\emptyset\).  In fact, experimentation shows that the expectation should be the opposite, namely:
			\begin{center}
				\textit{Whenever possible, there should be a place \(v\) where the image of \(\langle\alpha_v, -\rangle \colon X(k_v) \to \Q/\Z\) is large.}
			\end{center}

            Note that large image of \(\langle\alpha_v, -\rangle\) has implications for the Brauer-Manin set.
            \begin{mylem}
                Let \(X\) be a smooth projective variety over a number field \(k\) that has \(X(\A_k)\neq\emptyset\).  Let \(\alpha\in \Br  X\) be a nontrivial element.
                \begin{enumerate}
                    \item If there is a place \(v\) where the image of \(\langle\alpha_v, -\rangle \colon X(k_v) \to \Q/\Z\) has at least \(2\) elements, then \(X(\A_k)^{\alpha}\subsetneq X(\A_k)\).\label{part:nonconstant}
                    \item If there is a place \(v\) where the image of \(\langle\alpha_v, -\rangle \colon X(k_v) \to \Q/\Z\) is equal to \(\frac1{\textup{ord}(\alpha)}\Z/\Z\), then \(X(\A_k)^{\alpha}\neq\emptyset\).\label{part:surjective}
                \end{enumerate}
            \end{mylem}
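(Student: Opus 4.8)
The plan is to prove both parts by starting from an arbitrary adelic point — which exists because $X(\A_k)\neq\emptyset$ — and then modifying it in a single local coordinate. Before doing so I would record two structural facts that make this strategy legitimate. First, since $X$ is projective we have $X(\A_k)=\prod_v X(k_v)$, so replacing one coordinate of an adelic point by any local point again produces an adelic point. Second, for the fixed class $\alpha$ the local pairing $\langle\alpha_v,-\rangle$ vanishes identically outside a finite set $S_\alpha\subset\Omega_k$, because the global pairing takes values in $\bigoplus_v\Br k_v$; hence for every $(x_v)\in X(\A_k)$ the quantity $\langle\alpha,(x_v)\rangle=\sum_v\langle\alpha_v,x_v\rangle$ is a well-defined finite sum in $\Q/\Z$, and the defining condition of $X(\A_k)^{\alpha}$ makes sense.

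For part~\eqref{part:nonconstant}, I would fix a place $v$ and points $y,y'\in X(k_v)$ with $\langle\alpha_v,y\rangle\neq\langle\alpha_v,y'\rangle$, which exist by hypothesis. Taking any $(x_w)\in X(\A_k)$ and forming the two adelic points that agree with $(x_w)$ away from $v$ but have $v$-coordinate $y$ and $y'$ respectively, their global pairings with $\alpha$ differ by exactly $\langle\alpha_v,y\rangle-\langle\alpha_v,y'\rangle\neq 0$, since all other coordinates cancel. Thus at least one of these two adelic points has nonzero pairing and therefore lies outside $X(\A_k)^{\alpha}$, which gives the strict containment.

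For part~\eqref{part:surjective}, I would first record the torsion constraint. Writing $n:=\ord(\alpha)$, each $x_w^*\alpha$ has order dividing $n$ in $\Br k_w$, so by Theorem~\ref{thm:CFT} the value $\langle\alpha_w,x_w\rangle$ lies in $\frac1n\Z/\Z$ for every $w$ and every local point; hence $\langle\alpha,(x_w)\rangle\in\frac1n\Z/\Z$ for every adelic point. Then, starting from any $(x_w)\in X(\A_k)$ with total pairing $s:=\langle\alpha,(x_w)\rangle\in\frac1n\Z/\Z$, the hypothesis that $\langle\alpha_v,-\rangle$ surjects onto $\frac1n\Z/\Z$ lets me choose $y\in X(k_v)$ with $\langle\alpha_v,y\rangle=\langle\alpha_v,x_v\rangle-s$. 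Replacing the $v$-coordinate of $(x_w)$ by $y$ changes the total pairing from $s$ to $0$, producing a point of $X(\A_k)^{\alpha}$, so that set is nonempty.

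The arguments are largely formal once the two structural facts are in hand, so I do not anticipate a serious obstacle. The only point requiring genuine care is the torsion computation in part~\eqref{part:surjective}: it is precisely the constraint $\langle\alpha_w,x_w\rangle\in\frac1n\Z/\Z$ that guarantees the total pairing $s$ lands in the subgroup $\frac1n\Z/\Z$ onto which $\langle\alpha_v,-\rangle$ surjects, so that a single well-chosen coordinate can cancel $s$ exactly. (The nontriviality of $\alpha$ is not invoked directly, but it is implied by either hypothesis, since a constant evaluation map at every place could only occur for $\alpha$ with everywhere-trivial local restrictions.)
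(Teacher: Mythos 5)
Your proof is correct and follows essentially the same approach as the paper: in both parts you take an arbitrary adelic point (which exists since $X(\A_k)\neq\emptyset$) and modify its coordinate at the distinguished place to change the total invariant sum. In part~\eqref{part:surjective} you are in fact slightly more careful than the paper, since you explicitly verify that the sum $\sum_{w\neq v}\inv_w\langle\alpha_w,x_w\rangle$ lands in $\frac{1}{\ord(\alpha)}\Z/\Z$ (so that a preimage under the local evaluation map actually exists), a point the paper's proof uses implicitly.
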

			\begin{proof}
				Let \(v_0\) be a place satisfying the assumptions of part~\ref{part:nonconstant} or part~\ref{part:surjective} and let \((P_v)_{v\in \Omega}\in X(\A_k)\).  
                
                If \(v_0\) satisfies the assumptions of part~\ref{part:nonconstant}, then there exists a point \(Q_{v_0}\in X(k_{v_0})\) such that \(\langle\alpha_{v_0},P_{v_0}\rangle\neq \langle\alpha_{v_0},Q_{v_0}\rangle\).  Define the adelic point \((Q_v)_{v\in \Omega}\) to be equal to \((P_v)\) at all places \(v\neq v_0\) and to be equal to \(Q_{v_0}\) at \(v=v_0\).  Since \(\inv_v\) is an isomorphism, the difference
                \[
                  \sum_v \inv_v\langle\alpha,P_v\rangle - \sum_v\inv_v\langle\alpha,Q_v\rangle = \inv_v\left(\langle\alpha_{v_0},P_{v_0}\rangle- \langle\alpha_{v_0},Q_{v_0}\rangle\right) 
                \]
                must be nonzero, so at least one of \((P_v)\), \((Q_v)\) must not be orthogonal to \(\alpha\).  Hence, \(X(\A_k)^{\alpha}\subsetneq X(\A_k)\).

                Now assume that \(v_0\) satisfies the assumptions of part~\ref{part:surjective}.  Then, there exists a point \(Q_{v_0}\in X(k_{v_0})\) such that \(\inv_{v_0}\langle\alpha_{v_0},Q_{v_0}\rangle = \sum_{v\neq v_0} \inv_v\langle\alpha_v,P_v\rangle\).  Define the adelic point \((Q_v)_{v\in \Omega}\), as above, i.e., so that \(Q_v\) is equal to \((P_v)\) at all places \(v\neq v_0\) and is equal to \(Q_{v_0}\) at \(v=v_0\).  Then, by construction, \( (Q_v)\in X(\A_k)^{\alpha}\).
			\end{proof}
			The experimental observation articulated above is corroborated by several theoretic results, the earliest of which is due to Harari in 1994.
			\begin{myprop}[\cite{Harari-Duke}*{Prop. 6.1.1}]
				Let \(\pi\colon \mathcal{V} \to \PP^1_k\) be a projective morphism of smooth varieties, and let \(V\) denote the generic fiber of \(\pi\).  Assume that \(\mathcal{V}(\A_k)\neq\emptyset\) and that \(\pi\) has a geometric section.  
				
				If \(\Br  \mathcal{V}=\Br_0 \mathcal{V}\) and \(\Br  V\neq \Br_0V\), then there are infinitely many rational points \(\theta\in \PP^1(k)\) such that \(\mathcal{V}_{\theta}(\A_k)^{\Br} \subsetneq \mathcal{V}_{\theta}(\A_k)\).
			\end{myprop}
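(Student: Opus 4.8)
The plan is to realize the nontrivial class on the generic fiber as a Brauer class on the preimage of a dense open $U\subseteq\PP^1_k$, locate a fiber along which this class ramifies, and then use a reciprocity/Chebotarev argument to produce infinitely many rational points $\theta$ for which some local evaluation map on $\mathcal{V}_\theta$ is nonconstant. The Lemma immediately preceding this Proposition then upgrades this nonconstancy into the desired strict inclusion.

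First I would choose $\alpha\in\Br V$ representing a nonzero element of $\Br V/\Br_0 V$ and spread it out. Since $V$ is the generic fiber of $\pi$ and the Brauer group commutes with the filtered colimit over dense opens of the base, $\alpha$ is the restriction of a class $\mathcal{A}\in\Br\,\pi^{-1}(U)$ for some dense open $U\subseteq\PP^1_k$. Shrinking $U$, I may assume $\pi$ is smooth over $U$ with geometrically integral fibers (the geometric section guarantees geometric integrality of the generic, hence of a generic, fiber) so that $\mathcal{V}_\theta$ is smooth projective geometrically integral for every $\theta\in U(k)$. Because $\Br\mathcal{V}=\Br_0\mathcal{V}$, if $\mathcal{A}$ extended to $\Br\mathcal{V}$ it would be constant, forcing $\alpha\in\Br_0 V$ and contradicting our choice; hence $\mathcal{A}$ does not extend across all fibers. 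By the Grothendieck--Faddeev residue (purity) sequence on the regular variety $\mathcal{V}$, this means $\mathcal{A}$ has a nonzero residue along some irreducible component of a fiber $\mathcal{V}_P$, where $P\in\PP^1_k\setminus U$ is a closed point; a priori this residue lives in $\HH^1(\kappa(\mathcal{V}_P),\Q/\Z)$.

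Next I would convert this fibral residue into a character of the residue field of the base, and here the geometric section $s$ is essential: pulling $\mathcal{A}$ back along a model of $s$ identifies the residue at $\mathcal{V}_P$ with a nonzero element $r_P\in\HH^1(\kappa(P),\Q/\Z)$, i.e.\ a nontrivial cyclic character of $G_{\kappa(P)}$. Applying Chebotarev to the corresponding cyclic extension produces infinitely many places $v$ of $k$ at which $r_P$ is locally nontrivial, together with rational points $\theta\in U(k)$ specializing to $P$ modulo $v$ (possible since $\PP^1(k)$ is dense in $\PP^1(k_v)$). For such a pair $(\theta,v)$, the local analysis of Brauer evaluation near a ramified fiber shows that the image of $\langle(\mathcal{A}_\theta)_v,-\rangle\colon\mathcal{V}_\theta(k_v)\to\Br k_v=\Q/\Z$, where $\mathcal{A}_\theta:=\mathcal{A}|_{\mathcal{V}_\theta}$, captures the nonzero value coming from $r_P$; in particular this evaluation map is nonconstant. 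Ranging over the infinitely many places furnished by Chebotarev yields infinitely many distinct such $\theta$.

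Finally I would use $\mathcal{V}(\A_k)\neq\emptyset$ together with the implicit function theorem (and the section, which supplies smooth local points on nearby fibers) to arrange that each chosen $\theta$ also has $\mathcal{V}_\theta(\A_k)\neq\emptyset$. For each such $\theta$, part~\ref{part:nonconstant} of the Lemma preceding this Proposition, applied to $\mathcal{A}_\theta$ at the witnessing place $v$, gives $\mathcal{V}_\theta(\A_k)^{\mathcal{A}_\theta}\subsetneq\mathcal{V}_\theta(\A_k)$, whence $\mathcal{V}_\theta(\A_k)^{\Br}\subsetneq\mathcal{V}_\theta(\A_k)$. The main obstacle is the third step: correctly transporting the fibral residue to the character $r_P$ of $\kappa(P)$ through the geometric section, and then proving that a $v$-adic point $\theta$ specializing to $P$ genuinely makes the local evaluation nonconstant. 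This local computation --- essentially Harari's ``formal lemma'' --- is where the interplay between the residue, the section, and Hensel's lemma must be handled carefully, and where the Chebotarev input guaranteeing infinitely many usable places $v$ is indispensable.
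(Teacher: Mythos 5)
First, a point of reference: the paper does not actually prove this proposition; it cites Harari and records only the one-line idea that a class $\alpha\in\Br V\setminus\Br_0 V$ must produce ``enough variation'' of the local evaluation maps on the fibers. Your overall architecture --- spread $\alpha$ out to $\mathcal{A}\in\Br\,\pi^{-1}(U)$, observe that $\Br\mathcal{V}=\Br_0\mathcal{V}$ forces $\mathcal{A}\notin\Br\mathcal{V}$, manufacture places $v$ and fibers $\mathcal{V}_\theta$ with nonconstant evaluation of $\mathcal{A}|_{\mathcal{V}_\theta}$, and finish with part (1) of the preceding Lemma --- is exactly this idea and is faithful to Harari's method. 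But your central step contains a genuine gap. You claim that pulling $\mathcal{A}$ back along (a model of) the section $s$ identifies the residue of $\mathcal{A}$ along a component of a bad fiber $\mathcal{V}_P$ with a \emph{nonzero} character $r_P\in\HH^1(\kappa(P),\Q/\Z)$. Two things go wrong here. The section is only \emph{geometric}, i.e.\ defined over $\kbar$, so $s^*\mathcal{A}$ and the restricted residue live over $\kappa(P)\otimes_k\kbar$, not over $\kappa(P)$; and even for an honest $k$-section, the residue is a class in $\HH^1$ of the \emph{function field} of the fibral component, whose specialization at the single closed point where the section meets that component can perfectly well vanish while the residue itself is nontrivial. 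So the character $r_P$ on which your entire Chebotarev step rests need not exist, and when it does it need not be nonzero. This is precisely the difficulty that Harari's ``formal lemma'' (Cor.~2.6.1 of the cited paper) is designed to avoid: one works directly with $\mathcal{A}\in\Br\,\pi^{-1}(U)\setminus\Br\mathcal{V}$ on the total space to produce infinitely many places $v$ and points $M_v\in\pi^{-1}(U)(k_v)$ with $\inv_v\mathcal{A}(M_v)\neq 0$, and only then slices by fibers through (points near) $M_v$. You relegate the formal lemma to a closing remark about a ``local computation to be handled carefully,'' but it is the main engine of the proof, and your residue-through-the-section shortcut does not substitute for it.

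Two smaller gaps in the same vein: to get $\mathcal{V}_\theta(\A_k)\neq\emptyset$ you handle finitely many places by the implicit function theorem, but the infinitely many remaining places require the fibers over $U$ to be geometrically integral (here the geometric section earns its keep) together with the Lang--Weil bounds and Hensel's lemma, as in the proof of the paper's Proposition on local solubility; the section does not ``supply smooth local points'' since it only provides $\kbar$-points. And to invoke part (1) of the Lemma you need the image of $\langle(\mathcal{A}_\theta)_v,-\rangle$ to contain \emph{two} values, so besides the point with nonzero evaluation you must also exhibit a point of $\mathcal{V}_\theta(k_v)$ where the evaluation is $0$ (e.g.\ one reducing away from the ramification locus); this is true but nowhere argued. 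With the formal lemma put in its proper place and these points repaired, your outline does become Harari's proof.
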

			The idea of the proof is to essentially show that for \(\alpha\in \Br  V - \Br  \calV\), there must be enough variation of the images of the local pairing with \(\alpha\) to obtain the result.

            Later work of Bright~\cite{Bright-BadReduction}*{Thm. 5.16} shows that if the residue field of \(k_v\) is sufficiently large, and the special fiber of a model \(\mathscr{X}\) over \(\OO_v\) of \(X/k_v\) has rich enough geometry, then there exists a Brauer class \(\alpha\in \Br X_v\) whose image is as large as possible.

            More recently, Pagano~\cite{Pagano}, following work of Bright and Newton~\cite{BN-OrderpElements}, constructed an example with surjective evaluation map at a place of good reduction, thereby showing that any generalization of Proposition~\ref{prop:LocalEvaluation}\eqref{part:LocallyConstant} must retain some assumption on \(\alpha\).

        \subsection{Extended exercise computing a Brauer-Manin obstruction}
        \begin{myexer}

		Let \(X\subset \PP^4\) be given by the vanishing of the following two quadrics
		\[
			st - x^2 + 5y^2, \quad (s+t)(s + 2t) - x^2 + 5z^2.
		\]
		This variety was first studied by Birch and Swinnerton-Dyer~\cite{BSD-dp4}.\label{BSD}
		\begin{enumerate}[itemsep=12pt]
			\item  Use the Hasse-Weil bounds to show that any smooth genus \(1\) curve over a finite field \(\F\) has an \(\F\)-point.  
            
            \item Show that an intersection of quadrics in \(\PP^3\) is a genus \(1\) curve, and prove that \(X\cap V(z)\) is smooth modulo \(p\) for all \(p\neq 2, 5\).  Use this to deduce that \(X(\Q_p)\neq \emptyset\) for all \(p\neq 2, 5\).
			\item Use the special case of Hensel's Lemma given in Exercise~\ref{exer:ConicLocal} to show that \(X(\Q_5)\neq\emptyset\) and that \(X(\Q_2)\neq\emptyset\).  Then prove that \(X(\A_{\Q})\neq \emptyset\).
			\item Using Exercise~\ref{exer:TrivialBrauerClass}, show that \((5, \frac{s}{t})\) and \((5, \frac{s+t}{s+2t})\) are trivial in \(\Br  \kk(X)\). 
			
			\item Using the previous part and Remark~\ref{rmk:ProductsOfQuaternions} show that, in \(\Br  \kk(X)\)
			\[
				\calA := \left(5, \frac{s+t}{s}\right) =  \left(5, \frac{s+2t}{s}\right) = 
				\left(5, \frac{s+t}{t}\right) =  \left(5, \frac{s+2t}{t}\right).
			\]
			Additionally show that for every point \(P \in X - V(s,t)\), there is an open set \(P\in U_P \subset X - V(s,t)\) such that at least one of \(\frac{s+t}{s}, \frac{s+t}{t}, \frac{s+2t}{s}, \frac{s+2t}{t}\) is regular and invertible on \(U\).  This proves that \(\alpha\in \Br U\).  Now use Theorem~\ref{thm:BrProps}\eqref{part:codim2} to prove that \(\alpha\in \Br X\).\label{Purity}
	
			\item Show that \(X(\A_{\Q})^{\calA} = \emptyset\) and hence \(X(\A_{\Q})^{\Br} = \emptyset\).  (\textit{Sketch}: First show that for all \(p\neq 5\) and \(P_p\in X(\Q_p)\), at least one of \(\frac{s+t}{s}, \frac{s+t}{t}, \frac{s+2t}{s}, \frac{s+2t}{t}\) is a \(p\)-adic unit at \(P_p\).  Then, noting that \(\Q_p(\sqrt{5})/\Q_p\) is unramified for \(p\neq 5\), use Exercise~\ref{exer:TrivialBrauerClass} to deduce that \(\calA(P_p)=0\in \Br  \Q_p\). Lastly, show that \(\calA(P_5)\neq 0\in \Br  \Q_5\) for all \(P_5\in X(\Q_5)\).)
		\end{enumerate}
    \end{myexer}
	%%%%%%%%%%%%%%%%%%%%%%%%%%%%%%%%%%%%%%%%%%%%%%%%%%%%%%%%%%%%%%%%%%%%%%%%
	\section{Capturing the Brauer-Manin obstruction to the existence of points}\label{sec:Capturing}
	%%%%%%%%%%%%%%%%%%%%%%%%%%%%%%%%%%%%%%%%%%%%%%%%%%%%%%%%%%%%%%%%%%%%%%%%

		We saw in Section~\ref{sec:LocalEvaluation} that ``unless forced otherwise'' we should typically expect \(X(\A_k)^{\alpha} \subsetneq X(\A_k)\) and in fact, that Brauer elements should not only cause obstructions, but cause \emph{independent} obstructions as much as they are able.  This shows that we cannot hope to meaningfully decrease the work involved to compute \(X(\A_k)^{\Br}\), that to compute the Brauer-Manin set, we really need to determine all the local pairings for a complete set of generators of \(\Br  X/\Br_0 X\).

		However, if we are merely interested in the existence of rational points (rather than finer information about them), we can make do with answering the possibly weaker decision problem: determining whether \(X(\A_k)^{\Br}\neq\emptyset\).

		Why may this problem be easier?  One can show that a consequence of the topological properties of the Brauer-Manin pairing (Proposition~\ref{prop:LocalEvaluation}) and the compactness of \(X(\A_k)\)\footnote{Compactness of \(X(\A_k)\) follows from the compactness of \(X(k_v)\) for local fields~\cite{Poonen-Qpts}*{Prop. 2.6.1}, the definition of the topology on \(X(\A_k)\)~\cite{Poonen-Qpts}*{Section 2.6.3}, and Tychonoff's theorem.} is that if \(X(\A_k)^{\Br} = \emptyset\) then this is witnessed by a finite collection of Brauer classes.

		\begin{myexer}\hfill

		\begin{enumerate}
			\item Using Proposition~\ref{prop:LocalEvaluation}\eqref{part:LocallyConstant}, show that \(\textup{ev}_{\alpha}\colon X(\A_k)\to \Q/\Z\) is locally constant.
			\item Let \(\alpha\in \Br  X\).  Show that \(X(\A_k)^{\alpha}\) is open and closed in the adelic topology.  Conclude that \(X(\A_k)^{\Br}\) is closed.
			\item Assume that \(X(\A_k)^{\Br} = \emptyset\).  Show that there exists a finite set \(B\subset \Br  X\) such that \(X(\A_k)^B = \emptyset.\)
		\end{enumerate}
	\end{myexer}

	In this section we consider the question of whether any properties of this finite subset of Brauer classes can be determined \textit{a priori}, i.e., without computing \(X(\A_k)^{\Br}\).  With this goal in mind, we make the following definitions.
	\begin{mydef} Let \(X\) be a smooth projective variety over a number field \(k\) and let \(\mathcal{B}\subset \Br  X\).
		\begin{enumerate}
			\item The set \(\mathcal{B}\) \defi{captures the Brauer-Manin obstruction} (to the existence of rational points) if \(X(\A_k)^{\Br} = \emptyset\Rightarrow X(\A_k)^{\mathcal{B}} = \emptyset.\)
			\item The set \(\mathcal{B}\) \defi{completely captures the Brauer-Manin obstruction} (to the existence of rational points) if, for all \(\mathcal{B}'\subset \Br  X\), we have 
            \[
                X(\A_k)^{\mathcal{B}'} = \emptyset\Rightarrow X(\A_k)^{\mathcal{B}\cap\mathcal{B}'} = \emptyset.\footnote{This terminology came out of discussions with Brendan Creutz.  We use `capture' in the sense of ``record or express accurately''}
            \]
		\end{enumerate}
	\end{mydef}
	Although these definitions are relatively recent, prior results can be phrased in this language.
	\begin{mythm}\label{thm:ClassicalCapturing}\hfill
		\begin{enumerate}
			\item If \(C\) is a smooth projective degree d genus \(1\) curve, then \(\Br  C[d^{\infty}]\) completely captures the Brauer-Manin obstruction.  (Follows from~\cite{Manin-ICM})
			\item If \(X\) is a smooth projective cubic surface (equivalently, a degree \(3\) del Pezzo) then \(\Br  X[3]\) completely captures the Brauer-Manin obstruction.~\cite{SD-Cubic}*{Corollary 1}
			\item If \(X\) is a smooth projective del Pezzo surface of degree \(3\) or \(4\), then there exists an \(\alpha\in \Br  X\) such that \(\{\alpha\}\) captures the Brauer-Manin obstruction.~\cite{CTP}*{Lemma 3.4 and the following Remark 2}
		\end{enumerate}
	\end{mythm}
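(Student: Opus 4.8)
The three statements share a common first move, so the plan is to reduce all of them to Galois cohomology. Curves and geometrically rational surfaces have trivial geometric Brauer group (\(\Br \overline C = 0\) and \(\Br \Xbar = 0\) in characteristic \(0\)), so the low-degree sequence~\eqref{eq:Br1} collapses to an isomorphism: for the curve we get \(\Br C/\Br_0 C \cong \HH^1(G_k,\Pic\overline C)\), and for a del Pezzo surface \(X\) we get \(\Br X/\Br_0 X \cong \HH^1(G_k,\Pic\Xbar)\). In particular every Brauer class is algebraic, and the whole problem becomes identifying which torsion subgroup of this cohomology group carries the emptiness of the Brauer--Manin set.

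For part (1), let \(J\) denote the Jacobian of \(C\). From \(0 \to \Pic^0\overline C \to \Pic\overline C \to \Z \to 0\), together with \(\HH^1(G_k,\Z)=0\) and the connecting map \(\HH^0(G_k,\Z)=\Z \to \HH^1(G_k,J)\) sending \(1\mapsto [C]\), the long exact sequence yields \(\HH^1(G_k,\Pic\overline C) \cong \HH^1(G_k,J)/\langle [C]\rangle\), where \([C]\) is the class of \(C\) as a torsor under \(J\). The key numerical input is that the order of \([C]\) (the period of \(C\)) divides the index, which divides \(d\), so \([C]\) is \(d\)-primary torsion. I would then invoke the duality underlying Manin's original computation~\cite{Manin-ICM}: once \(C(\A_k)\neq\emptyset\), so that \([C]\) lies in the Tate--Shafarevich group \(\mathrm{Sha}(J)\), the Brauer--Manin pairing on \(C(\A_k)\) is computed by the Cassels--Tate pairing against \([C]\). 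Since \([C]\) is \(d\)-primary, it is orthogonal to the prime-to-\(d\) part of \(\mathrm{Sha}(J)\), so any obstruction is already visible on \(\Br C[d^{\infty}]\); because the Cassels--Tate pairing is the orthogonal sum of its primary components, this upgrades to the \emph{completely captures} statement, as intersecting any obstructing family with \(\Br C[d^{\infty}]\) cannot destroy emptiness. The delicate step is matching the Brauer--Manin pairing with the Cassels--Tate pairing and the bookkeeping of primary components.

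For part (2), I would run through the finite list of conjugacy classes of subgroups \(G \subseteq W(E_6)\) (the group of automorphisms of \(\Pic\Xbar\) preserving the intersection form and \(K_X\)) and compute \(\HH^1(G,\Pic\Xbar)\) in each case. The remaining content---the actual theorem of~\cite{SD-Cubic}---is to show that any Brauer--Manin obstruction on a cubic surface is already produced by \(\Br X[3]\): classes of order prime to \(3\) must be shown not to yield an obstruction not already seen on the \(3\)-torsion. I would attack this through the local evaluation maps of Proposition~\ref{prop:LocalEvaluation}, exploiting that the genuine obstructions on a cubic surface arise from its degree-\(3\) (cyclic cubic norm-form) structure, so that only the \(3\)-primary part can contribute a nonzero reciprocity sum.

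Part (3) asks only for a single class \(\alpha\) that captures (not complete capture), but is subtler because \(\Br X/\Br_0 X\) can have rank \(2\) (at most \((\Z/3)^2\) in degree \(3\), and at most \((\Z/2)^2\) in degree \(4\), the latter governed by \(W(D_5)\)), and a rank-\(2\) obstruction need not in general be witnessed by any single class. Assuming \(X(\A_k)^{\Br}=\emptyset\), I would fix a pair of generators, analyze the finitely many local evaluation patterns via Proposition~\ref{prop:LocalEvaluation}, and show---using the specific geometry of degree \(3\) and \(4\) del Pezzo surfaces---that one element in their span already satisfies \(X(\A_k)^{\alpha}=\emptyset\). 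This last point, that the two generators cannot produce genuinely independent obstructions requiring both classes simultaneously, is the crux of~\cite{CTP}*{Lemma 3.4}, and I expect it to be the main obstacle of the whole theorem.
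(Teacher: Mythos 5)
The paper offers no proof of this theorem --- it is a literature summary, with all content deferred to \cite{Manin-ICM}, \cite{SD-Cubic}, and \cite{CTP} --- so there is no internal argument to compare against; I can only judge your sketch against those sources. Your part (1) is essentially the right argument: \(\Br\overline C=0\) collapses \eqref{eq:Br1}, \(\HH^1(G_k,\Pic\overline C)\cong\HH^1(G_k,J)/\langle[C]\rangle\), the period divides the index divides \(d\), Manin's theorem identifies the (constant) adelic evaluation of a class with the Cassels--Tate pairing of \([C]\) against a lift, and orthogonality of distinct primary components kills everything of order prime to \(d\). One point you should make explicit for the ``completely captures'' form: if \(\mathcal{B}'\) is an arbitrary subset, then \(\Br C[d^\infty]\cap\mathcal{B}'\) can be empty even when some \(\alpha\in\mathcal{B}'\) obstructs (e.g.\ \(\mathcal{B}'=\{\alpha\}\) with \(\alpha\) of mixed order); the statement is rescued because the \(d\)-primary component of \(\alpha\) is a multiple of \(\alpha\), so one should work with the subgroups generated, in the spirit of Exercise~\ref{exer:BrmodBr0}.

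Parts (2) and (3) are where the real content lies, and there you name the difficulty without overcoming it. For (2), enumerating \(\HH^1(G,\Pic\Xbar)\) over subgroups of \(W(E_6)\) only yields the possible group structures \(0,\Z/2,(\Z/2)^2,\Z/3,(\Z/3)^2\); the theorem is precisely that order-\(2\) classes never obstruct, and ``the genuine obstructions arise from the degree-\(3\) structure'' is a restatement of that conclusion rather than an argument --- Swinnerton-Dyer's proof requires an actual analysis of the evaluation maps of an order-\(2\) class, via the line configurations that realize it, to show the reciprocity sum vanishes. For (3), the crux of \cite{CTP}*{Lemma 3.4} is a concrete combinatorial fact: the total evaluation map sends \(X(\A_k)=\prod_v X(k_v)\) to the sumset of the local images inside \(\bigl(\tfrac12\Z/\Z\bigr)^2\), this sumset is a coset of the subgroup generated by the local differences, and if it misses the origin then some single character already fails to vanish on it. Your sketch asserts the conclusion (``the two generators cannot produce genuinely independent obstructions'') but supplies no such lemma. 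In summary: (1) is a faithful outline of the cited proof; (2) and (3) are correctly framed but their key lemmas are invoked, not proved.
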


    Given a class of varieties, one could hope to find a \(\mathcal{B}\subset \Br  X\) that captures or completely captures the Brauer-Manin obstruction where \(X(\A_k)^{\mathcal{B}}\) is (more) easily computable.  Recent work of Skorobogatov--Zarhin, Creutz and myself, and Nakahara indicates this is possible for varieties closely related to abelian varieties.

    \begin{mythm}\label{thm:RecentCapturing}\hfill
        \begin{enumerate}
            \item {\cite{CV-Capturing}*{Thm. 1.2}} Let \(V/k\) be a torsor under an abelian variety.  Then \(\Br  V[\textup{per}(V)^{\infty}]\) completely captures the Brauer-Manin obstruction.\label{part:CV}
            \item \cite{CV-Capturing}*{Thm. A.1} (building on~\cites{SZ-Capturing, CV-Capturing}) Let \(Y/k\) be a Kummer variety.  Then \(\Br  V[2^{\infty}]\) completely captures the Brauer-Manin obstruction.\label{part:Kummer}
            \item \cite{Nakahara-Capturing}*{Thm. 1.5} Let \(\pi\colon X \to \PP^n\) be a fibration of torsors under abelian varieties and let \(P\) denote the period of the generic fiber \(X_{\eta}\) (considered as a torsor under an abelian variety over \(\kk(\eta)\)).  Then \(\Br  X[P^{\infty}]\) completely captures the Brauer-Manin obstruction.\label{part:Nakahara}
            \item \cite{Creutz-Capturing}*{Thm. 1} Let \(V/k\) be a torsor under an abelian variety.  Then the subgroup of locally constant Brauer classes completely captures the Brauer-Manin obstruction.\label{part:Creutz}
        \end{enumerate}
    \end{mythm}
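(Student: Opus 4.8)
The plan is to handle all four parts by a single mechanism, since each asserts that a distinguished \emph{subgroup} $\mathcal{B}\subset\Br V$ completely captures the obstruction for a torsor (or, in~\eqref{part:Nakahara}, a fibration of torsors) under an abelian variety. Two reductions come first. The assertion compares Brauer–Manin sets and never mentions $V(k)$, so it is unconditional; and when $V(\A_k)=\emptyset$ every hypothesis and conclusion holds vacuously, so I may assume $V(\A_k)\neq\emptyset$. As $V$ is projective, $V(\A_k)=\prod_v V(k_v)$, and nonemptiness forces $[V]$ to be locally trivial at every place, i.e.\ $[V]\in\textup{Sha}(A)\subset\HH^1(k,A)$, an element whose order is the period $n=\textup{per}(V)$. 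Finally, using Exercise~\ref{exer:BrmodBr0} I pass from subsets to the subgroups they generate, so that throughout $\mathcal{B}$ and $\mathcal{B}'$ are subgroups and $\mathcal{B}\cap\mathcal{B}'=\mathcal{B}'\cap\Br V[n^\infty]$ is the $n$-primary part of $\mathcal{B}'$.

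The heart of the argument is a pairing computation. For a fixed $(P_v)\in V(\A_k)$ consider the homomorphism $f_{(P_v)}\colon\Br V\to\Q/\Z$ sending $\beta\mapsto\sum_v\inv_v\langle\beta,P_v\rangle$, which is well defined and additive by Theorem~\ref{thm:CFT}; by definition $(P_v)\in V(\A_k)^{\mathcal{B}'}$ iff $f_{(P_v)}$ vanishes on $\mathcal{B}'$. For an algebraic class $\alpha\in\Br_1 V$, the exact sequence~\eqref{eq:Br1} together with $\Pic^0\bar V\cong\hat A$ attaches to $\alpha$ a class $b_\alpha\in\HH^1(k,\hat A)$, and the classical computation going back to Manin identifies $f_{(P_v)}(\alpha)$ with the Cassels–Tate pairing $\langle[V],b_\alpha\rangle$. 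Because $[V]$ is $n$-primary of order $n$, the orthogonality of primary components of the Cassels–Tate pairing shows that $f_{(P_v)}$ annihilates the prime-to-$n$ part of $\Br_1 V$ and depends only on $n$-primary components. It follows that, for each $(P_v)$ and each subgroup $\mathcal{B}'$, the map $f_{(P_v)}$ vanishes on $\mathcal{B}'$ iff it vanishes on $\mathcal{B}'\cap\Br V[n^\infty]=\mathcal{B}\cap\mathcal{B}'$; hence $V(\A_k)^{\mathcal{B}'}=V(\A_k)^{\mathcal{B}\cap\mathcal{B}'}$, which is far stronger than complete capture. This proves~\eqref{part:CV} once the algebraic restriction is removed; part~\eqref{part:Kummer} is the case where the governing period is $2$, and part~\eqref{part:Nakahara} follows by applying the period bound to the generic fiber and combining it with a fibration argument over $\PP^n$.

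Two points carry the real difficulty. First, $\Br V[\textup{per}(V)^\infty]$ contains transcendental classes invisible to the $\HH^1(k,\hat A)$ description; bounding the order of their contribution by the period requires the extra Creutz–Viray input that analyzes how the group law $V\times A\to V$ and the translations $t_a$ act on $\Br V$, constraining any obstruction to be period-primary. Second, and this is the main obstacle, part~\eqref{part:Creutz} replaces ``$n$-primary'' by ``locally constant.'' Here I would study each local map $\langle\alpha_v,-\rangle\colon V(k_v)\to\Br k_v$ through the $A(k_v)$-action and Proposition~\ref{prop:LocalEvaluation}, showing that its non-constant variation can be neutralized by modifying the adelic point at a single place; a global reciprocity argument via the sequence of Theorem~\ref{thm:CFT} then exhibits an obstructing family consisting of locally constant classes still lying in $\mathcal{B}'$. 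Keeping this replacement inside the prescribed subgroup $\mathcal{B}\cap\mathcal{B}'$ is exactly the delicate bookkeeping, and it is where the torsor hypothesis—rather than arbitrary geometry—is indispensable.
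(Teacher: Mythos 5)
The paper gives no proof of this theorem: it is a survey statement, and each part is attributed to its source (\cite{CV-Capturing}, \cite{Nakahara-Capturing}, \cite{Creutz-Capturing}), so the only ``proof'' in the text is the citation. Measured against the arguments in those references, your sketch contains one genuine error at its core and otherwise names the hard steps rather than carrying them out.

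The error is the central claim that \(f_{(P_v)}\) annihilates the prime-to-\(n\) part of \(\Br_1 V\) for \emph{every} adelic point, hence that \(V(\A_k)^{\mathcal{B}'}=V(\A_k)^{\mathcal{B}\cap\mathcal{B}'}\). Manin's identification of \(\sum_v \inv_v\langle\alpha,P_v\rangle\) with the Cassels--Tate pairing \(\langle [V], b_\alpha\rangle\) is valid only when \(b_\alpha\) lies in \(\textup{Sha}(\hat A)\); equivalently, only for the locally constant classes appearing in part~\eqref{part:Creutz}. For general \(\alpha\in\Br_1 V\) the sum genuinely depends on \((P_v)\) --- this is exactly the phenomenon emphasized in Section~\ref{sec:LocalEvaluation}, that evaluation maps tend to have large image --- and classes of order prime to \(\textup{per}(V)\) can and do cut out proper subsets of \(V(\A_k)\) (for instance, odd-order obstructions to weak approximation on abelian and Kummer surfaces). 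The cited theorems assert only that such classes cannot make the obstruction set \emph{empty} when the \(\textup{per}(V)\)-primary part fails to; your ``far stronger'' set-theoretic equality is false, and the proof in \cite{CV-Capturing} instead works with the emptiness statement directly, relating the \(d\)-primary obstruction to divisibility of \([V]\) in \(\textup{Sha}(A)\) and treating the transcendental part by a separate argument rather than by the pairing computation.

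The remaining parts are restated rather than proved: for~\eqref{part:CV} the transcendental classes are deferred to ``the extra Creutz--Viray input''; for~\eqref{part:Nakahara} ``a fibration argument over \(\PP^n\)'' is the entire content of Nakahara's theorem; and for~\eqref{part:Creutz} the passage from non-constant local variation to an obstructing locally constant class still lying in \(\mathcal{B}'\) is asserted, not constructed. These are precisely the points where the cited papers do their work, so the proposal should be regarded as an (partly inaccurate) heuristic for why period-primary torsion is the relevant invariant, not as a proof.
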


    However, it seems unlikely that there exists an easily describable proper subgroup \(\mathcal{B}\) that captures or completely captures the Brauer-Manin obstruction for arbritrary varieties.  In joint work with Creutz and Voloch, we pursued this line of inquiry for curves~\cite{CVV-CapturingCurves}.  While our results do not rule out the possibility of a capturing or completely capturing proper subgroup, they do suggest that any such subgroup likely does not depend only on the genus and the degree.  

    \subsection{Open questions}

    \subsubsection{del Pezzo surfaces}  The older results summarized in Theorem~\ref{thm:ClassicalCapturing} show that for cubic and quartic del Pezzo surfaces, there is always a single Brauer class that captures the Brauer-Manin obstruction.  For degree \(2\) del Pezzo surfaces (the only other case that can have empty Brauer set and nonempty adelic points), Nakahara's result from Theorem~\ref{thm:RecentCapturing}\eqref{part:Nakahara} implies that the \(2\)-primary subgroup of the Brauer group completely captures the Brauer-Manin obstruction.  However, it is not known how many elements are needed to detect a Brauer-Manin obstruction.  Corn constructed an example of a degree \(2\) del Pezzo surfaces where two Brauer classes are required to detect the obstruction~\cite{Corn-dp2}*{Example 9.4}.  However, the \(\F_2\)-dimension of \(\Br  X/(\Br_0 X + 2\Br  X)\) can be as large as \(6\)~\cite{Corn-dp2}*{Thm. 4.1}.  Are there degree \(2\) del Pezzo surfaces that require \(3\) Brauer classes to detect the obstruction? \(4\) Brauer classes? \(6\) Brauer classes?

    \subsubsection{K3 surfaces} As mentioned in Theorem~\ref{thm:RecentCapturing}\eqref{part:Kummer}, Skorobogatov proved that the \(2\)-primary Brauer-Manin obstruction completely captures the Brauer-Manin obstruction on Kummer surfaces.  Since Kummer surfaces are a particular class of K3 surfaces, it is natural to ask whether the \(2\)-primary subgroup captures the Brauer-Manin obstruction for all K3 surfaces.  Unfortunately, this is false, as work of Gvirtz, Loughran, and Nakahara shows~\cite{GLN}*{Thm. 1.5}.  In addition, Berg and V\'arilly-Alvarado construct a K3 surface of Picard rank \(1\) where the \(2\)-primary subgroup fails to completely capture the Brauer-Manin obstruction~\cite{BVA}*{Thm. 1.1}.  In both of these papers, the authors show that the obstruction can be detected by \(3\)-torsion.  Thus, one can ask whether the \(6\)-primary subgroup captures (or completely captures) the Brauer-Manin obstruction on K3 surfaces or whether classes of order prime to \(6\) can also obstruct the existence of rational points.

	%%%%%%%%%%%%%%%%%%%%%%%%%%%%%%%%%%%%%%%%%%%%%%%%%%%%%%%%%%%%%%%%%%%%%%%%
	\section{Behavior of the Brauer-Manin obstruction over extensions}\label{sec:Persistence}
	%%%%%%%%%%%%%%%%%%%%%%%%%%%%%%%%%%%%%%%%%%%%%%%%%%%%%%%%%%%%%%%%%%%%%%%%

		Thus far, we have considered the question of determining whether \(X(k)\neq\emptyset\) or whether \(X(\A_k)^{\Br}\neq \emptyset\) over a fixed ground field \(k\).  However, the arithmetic content of \(X\) is much richer than that.  In some sense, the arithmetic of \(X\) is the set of all algebraic points \(X(\kbar)\) together with its action of the absolute Galois group \(G_k := \Gal(\kbar/k)\).

		Thus, we can ask: what can we say about the sets \(X(\A_L)^{\Br}\) as \(L\) ranges over extensions of \(k\)?  Is knowledge of \(X(\A_k)^{\Br}\) enough to give us some information about what happens when we range over algebraic extensions?

		As we saw in Section~\ref{sec:LocalEvaluation}, asking to completely describe the Brauer sets \(X(\A_L)^{\Br}\) is likely too fine of a question since ``unless there is a reason otherwise'' we expect Brauer classes to carve out a proper subset of adelic points.  This is true even when \(X\) stays fixed and only the field varies, as the following results of Liang and Nakahara--Roven demonstrate.

		\begin{mythm}[\cite{Liang-WA}*{Thm. 3.1}]
			For all number fields \(k\), there exists a Ch\^atelet surface \(V/k\) and a finite extension \(L/k\) such that 
			\[
				V(\A_k)^{\Br} = V(\A_k)\neq\emptyset, \quad\textup{and}\quad
				V(\A_L)^{\Br}\subsetneq V(\A_L).
			\]
		\end{mythm}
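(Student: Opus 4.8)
The plan is to realize this phenomenon on a Ch\^atelet surface, exploiting the fact that the algebraic Brauer group of such a surface can \emph{grow} under a finite extension. Recall that a Ch\^atelet surface $V \colon y^2 - a z^2 = P(x)$ (with $a \in k^\times$ and $P \in k[x]$ separable of degree $4$) is a conic bundle over $\PP^1_k$ whose singular fibres lie over the roots of $P$; it is geometrically rational, so it carries no transcendental Brauer classes and $\Br V / \Br_0 V \cong \HH^1(G_k, \Pic \overline{V})$, the $G_k$-action on $\Pic \overline{V}$ factoring through the action on $\sqrt{a}$ and on the roots of $P$. The nonconstant classes are represented by quaternion algebras $(a, R(x))$ assembled from the factors of $P$, and the local evaluation of such a class at a place $v$ is computed by the Hilbert symbol $(a, R(x(P_v)))_v$.

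First I would choose $a$ and $P$ so that $P$ is \emph{irreducible} over $k$; then $G_k$ permutes the four singular fibres transitively, and the standard computation for Ch\^atelet surfaces (as in the work of Colliot-Th\'el\`ene, Sansuc and Swinnerton-Dyer) gives $\HH^1(G_k, \Pic \overline{V}) = 0$, i.e. $\Br V = \Br_0 V$. By Proposition~\ref{prop:LocalEvaluation}(2), every evaluation map $\langle \alpha_v, - \rangle$ with $\alpha \in \Br V$ is then constant, so $V(\A_k)^\alpha = V(\A_k)$ for all $\alpha$ and hence $V(\A_k)^{\Br} = V(\A_k)$. Simultaneously I would impose that $V$ is everywhere locally soluble, so that $V(\A_k) \neq \emptyset$: at the finitely many bad and archimedean places this is a finite list of conditions on $a$ and the coefficients of $P$, and at the remaining places it is automatic from the Hasse--Weil bounds and Hensel's lemma applied to the fibres, exactly as in Proposition~\ref{prop:LocalSolubilityFiniteComputation}.

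Next I would select the finite extension $L/k$ so that $P$ acquires a factorisation $P = P_1 P_2$ into two irreducible quadratics over $L$ while $a \notin L^{\times 2}$, chosen so that the $G_L$-module $\Pic \overline{V}$ now has $\HH^1(G_L, \Pic \overline{V}) \neq 0$. Concretely, passing from $G_k$ to the finite-index subgroup $G_L$ breaks the transitive action on the singular fibres into two orbits, and this is precisely the configuration in which the Ch\^atelet-surface computation produces a nonzero algebraic class, namely $\alpha := (a, P_1(x)) \in \Br V_L \setminus \Br_0 V_L$; note that $(a,P_1) + (a,P_2) = (a,P) = 0$ on $V_L$, since $P(x)$ is a norm from $L(\sqrt{a})$, so there is essentially one new class. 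Since local points only propagate under base change ($k_v \hookrightarrow L_w$ for $w \mid v$), we also retain $V(\A_L) \neq \emptyset$.

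Finally, and this is where the real work lies, I would verify that $\alpha$ genuinely obstructs at the adelic level over $L$, i.e. that $V(\A_L)^{\Br} \subsetneq V(\A_L)$. By part~\eqref{part:nonconstant} of the Lemma preceding this statement, it suffices to produce a single place $w$ of $L$ at which $\langle \alpha_w, - \rangle \colon V(L_w) \to \Q/\Z$ is \emph{nonconstant}. I would take $w$ to be a place where $a$ is a nonsquare unit and where $P_1$ has a simple root in the residue field; then the symbol $(a, P_1(x))_w$ detects the parity of the $w$-adic valuation of $P_1(x)$, and as $x$ ranges over $L_w$-points lifting to $V(L_w)$ the value $P_1(x)$ realises both the unit and the uniformiser square classes, furnishing local points $P_w, Q_w$ with $\langle \alpha_w, P_w \rangle \neq \langle \alpha_w, Q_w \rangle$. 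The main obstacle is exactly this balancing act: the extension $L$ must be engineered so that the invariant $\HH^1(G_L, \Pic \overline{V})$ jumps from zero over $k$ to nonzero over $L$, \emph{and} so that the resulting class has nonconstant local evaluation somewhere, all while everywhere-local-solubility over $k$ is preserved. Pinning down an explicit triple $(a, P, L)$ meeting all three requirements at once is the crux of the construction.
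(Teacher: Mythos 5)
The paper itself does not prove this statement---it quotes it from Liang---so I am comparing your proposal against the cited construction and against the surrounding results in these notes. Your overall architecture is the right one and is essentially Liang's: take $y^2-az^2=P(x)$ with $P$ irreducible over $k$ so that $\HH^1(G_k,\Pic\overline{V})=0$ and hence $\Br V=\Br_0 V$ and $V(\A_k)^{\Br}=V(\A_k)$; arrange everywhere local solubility; then pass to an extension $L$ over which $P=P_1P_2$ with $a$ still a nonsquare, so that $(a,P_1(x))$ becomes a nonconstant class in $\Br V_L$. All of that is correct and is the standard Ch\^atelet-surface calculus.

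The genuine gap is in the last step, where you claim nonconstancy of the local evaluation at a place $w$ where $a$ is a nonsquare unit and $P_1$ has a simple root in the residue field. This cannot work. At such a place $L_w(\sqrt a)/L_w$ is unramified, so $y^2-az^2$ is a norm form for the unramified quadratic extension and $v_w(P_1(x)P_2(x))=v_w(y^2-az^2)$ is forced to be \emph{even} at every $L_w$-point; if $x$ reduces to a simple root of $P_1$ with $P_2$ a unit there, then $v_w(P_1(x))$ itself is even, and since $\inv_w(a,t)=\tfrac{1}{2}v_w(t)$ for $a$ a nonsquare unit, the evaluation is identically $0$. Indeed your proposed $w$ is typically a place of good reduction at which $\alpha$ is split by the unramified extension $L_w(\sqrt a)$, so Proposition~\ref{prop:LocalEvaluation}(3) of these notes already tells you the evaluation map is constant there. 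To get odd valuations of $P_1(x)$ you need the two ``branches'' to interact: either a real place with $a<0$ (where $P_1$ and $P_2$ must have the same sign but that common sign can vary), or a finite place dividing the resultant of $P_1$ and $P_2$ (so that $P\bmod w$ is inseparable and $v_w(P_1(x))$, $v_w(P_2(x))$ can both be odd) with $a\notin L_w^{\times 2}$. These are exactly the hypotheses appearing in the Nakahara--Roven theorem quoted immediately after this statement, and building such a place into the choice of $(a,P,L)$ is the actual crux of the construction; as written, your argument produces no place with nonconstant evaluation and therefore does not establish $V(\A_L)^{\Br}\subsetneq V(\A_L)$.
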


		\begin{mythm}[\cite{NR-WA}*{Thm. 1.1}]
			Let \(k\) be a number field, let \(a\in k^{\times}\) and let \(p(x)\) be a degree \(4\) separable polynomial over \(k\).  Let \(X\) be the Ch\^atelet surface given by \(y^2 - az^2 = p(x)\) and let\(L\) be an extension where \(p(x)\) splits completely and \(X(\A_L)\neq\emptyset\).  Then \(X(\A_L)^{\Br}\subsetneq X(\A_L)\) if \(a\) is not totally positive or if there exists a finite place \(v\) such that \(p(x)\bmod v\) is not separable and \(a\notin k_v^{\times2}\).
		\end{mythm}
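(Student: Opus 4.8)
The plan is to produce, over \(L\), a single Brauer class \(\alpha\in\Br X_L\) together with one place \(w\) of \(L\) at which the local evaluation \(\langle\alpha_w,-\rangle\colon X(L_w)\to\Q/\Z\) is non-constant; granting \(X(\A_L)\neq\emptyset\), part~\ref{part:nonconstant} of the Lemma above then yields \(X(\A_L)^{\alpha}\subsetneq X(\A_L)\), and since \(X(\A_L)^{\Br}\subseteq X(\A_L)^{\alpha}\) we are done. Throughout I would use that \(X\) is a conic bundle over \(\PP^1_x\) via the \(x\)-coordinate, with degenerate fibres over the roots of \(p\), and that on \(X\) the function \(p(x)=y^2-az^2\) is a norm from \(L(\sqrt a)\), so \((a,p(x))=0\) in \(\Br\kk(X_L)\). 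Since \(p\) splits over \(L\), write \(p(x)=c\prod_{i=1}^4(x-r_i)\) with \(r_i\in L\); for any pair \(\mu\neq\nu\) the quaternion class \(\alpha_{\mu\nu}:=(a,(x-r_\mu)(x-r_\nu))\) extends to an element of \(\Br X_L\) by the purity argument of Theorem~\ref{thm:BrProps}\eqref{part:codim2} (exactly as in the Birch--Swinnerton-Dyer computation, Exercise~\ref{BSD} part~\ref{Purity}), where the relation \((a,p)=0\) is what controls the behaviour at \(x=\infty\) once the factors are paired.

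For the archimedean case, I would suppose \(a\) is not totally positive and pick a real place \(w\) of \(L\) with \(a<0\). The four roots are then real; order them \(r_1<r_2<r_3<r_4\) at \(w\), and note that the fibre conic \(Y^2-aZ^2=p(x_0)W^2=Y^2+|a|Z^2\) has an \(L_w\)-point exactly when \(p(x_0)>0\). Taking \(\alpha=\alpha_{13}\) or \(\alpha=\alpha_{24}\) according to the sign of \(c\), the local invariant \(\langle\alpha_w,P\rangle=(a,(x_0-r_\mu)(x_0-r_\nu))_w\) equals \(\tfrac12\) precisely when \((x_0-r_\mu)(x_0-r_\nu)<0\). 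A direct inspection of the sign pattern of \((x_0-r_\mu)(x_0-r_\nu)\) on the two components of \(\{p>0\}\) straddled by the chosen pair shows the invariant takes both values \(0\) and \(\tfrac12\) on \(X(L_w)\); hence the evaluation is non-constant and we conclude.

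For the nonarchimedean case, I would use the place \(v\) with \(p\bmod v\) inseparable and \(a\notin k_v^{\times2}\), and choose \(w\mid v\) in \(L\) with \(a\notin L_w^{\times2}\). Inseparability means two roots collide modulo \(w\), say \(r_1\equiv r_2\pmod{\mm_w}\) with \(m:=w(r_1-r_2)\geq1\), while \(r_3,r_4\) have distinct reductions; take \(\alpha=\alpha_{13}=(a,(x-r_1)(x-r_3))\), pairing one colliding root with a separated one. The mechanism is that the \emph{unused} colliding root \(r_2\) supplies an extra factor to \(p\): choosing \(x_0\) with \(0<s:=w(x_0-r_1)<m\) forces \(w(x_0-r_2)=s\) as well, so \(w(p(x_0))=2s+w(c)\) (the factors \(x_0-r_3,x_0-r_4\) being units) has parity independent of \(s\) and the fibre is solvable when this is even, whereas \(w((x_0-r_1)(x_0-r_3))=s\) can be taken odd. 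Since a uniformizer is not a norm from \(L_w(\sqrt a)\), this gives \(\langle\alpha_w,P\rangle\neq0\), while a generic unit value of \(x_0\) gives \(\langle\alpha_w,P'\rangle=0\); thus the evaluation is again non-constant.

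The main obstacle is this last (nonarchimedean) local computation, where three points need care. First, one must guarantee the existence of \(w\mid v\) with \(a\notin L_w^{\times2}\), since the hypothesis only constrains \(k_v\). Second, one must handle the parity bookkeeping that links solubility of the fibre conic \(Y^2-aZ^2=p(x_0)W^2\) (namely, that \(p(x_0)\) be a norm from \(L_w(\sqrt a)\)) to the valuation of \((x_0-r_1)(x_0-r_3)\); this splits according to the parity of \(w(c)\) and to whether \(L_w(\sqrt a)/L_w\) is ramified or unramified, and in the ramified case the invariant depends on residues, so one needs a quadratic-residue counting argument on \(\F_w\) of the type in Exercise~\ref{exer:ConicLocal} to separate the two values. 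Third, one must check that the chosen local solutions lift to \(L_w\)-points of the smooth projective model rather than landing on the singular points of the degenerate fibres. By contrast, the archimedean case is essentially immediate once the sign pattern is written out.
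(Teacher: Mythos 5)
First, note that these notes do not contain a proof of this statement: it is quoted verbatim from Nakahara--Roven \cite{NR-WA}, so there is no internal argument to compare yours against. That said, your overall strategy is certainly the intended one: realize \(\Br X_L/\Br_0 X_L\) by quaternion classes \(\alpha_{\mu\nu}=(a,(x-r_\mu)(x-r_\nu))\), exhibit a single place \(w\) of \(L\) where the evaluation map is non-constant, and invoke part~\eqref{part:nonconstant} of the lemma in Section~\ref{sec:LocalEvaluation}. Your archimedean computation is essentially complete (and in fact \(\alpha_{13}\) works for either sign of \(c\), so the case split on \(c\) is unnecessary).

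The finite-place half, however, is not yet a proof, and the obstacles you list are genuine gaps rather than bookkeeping. Most seriously: the hypothesis \(a\notin k_v^{\times 2}\) does \emph{not} supply a place \(w\mid v\) of \(L\) with \(a\notin L_w^{\times 2}\) --- if \(k_v(\sqrt{a})\subset L_w\) for every \(w\mid v\) (e.g.\ if \(\sqrt{a}\in L\)), then every class of the form \((a,f)\), hence all of \(\Br X_L/\Br_0 X_L\), evaluates trivially at every such \(w\), and no choice of \(\alpha\) can rescue the method. The same issue afflicts the real case: ``not totally positive'' must be read in \(L\), since a totally imaginary \(L\) has no real place at which to evaluate. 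So the conditions in the statement have to be imposed on \(L\) (which is how the source phrases them); your proof silently does this for the archimedean case and cannot do it for the nonarchimedean one. Separately, your valuation count collapses in the first nontrivial case: if \(m=w(r_1-r_2)=1\) there is no integer \(s\) with \(0<s<m\), and when \(w(c)\) is odd, or \(L_w(\sqrt{a})/L_w\) is ramified, or three or more roots of \(p\) collide modulo \(w\), the solubility of the fibre and the value of \((a,(x_0-r_1)(x_0-r_3))_w\) are no longer governed by the single parameter \(s\); one must redo the analysis with residues in \(\F_w\) and with the other pairings \(\alpha_{\mu\nu}\). Until that case analysis is closed (and the smoothness of the chosen local points on the projective model is checked, as you note), the nonarchimedean case remains a plan rather than an argument.
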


		In light of these results, we must modify our question of interest.  Instead of asking to describe the Brauer-Manin sets over all extensions, we can instead ask whether we can say anything about the set of field extensions
		\[
		\{L/k \textup{ finite}:X(\A_L)^{\Br} = \emptyset\}	
		\]
		or its complement 
		\[
		\{L/k \textup{ finite}:X(\A_L)^{\Br} \neq \emptyset\}	
		\]
		based on information about \(X\) over \(k\).

        This is a relatively new research direction, and, as we will see, the results thus far are mainly limited to geometrically rational surfaces.  More exploration is needed to determine whether the phenomena we will see manifests in greater generality.
		%%%%%%%%%%%%%%%%%%%%%%%%%%%%%%%%%%%%%%%%%%%%%%%%%%%%%%%%%%%%%%%%%%
		\subsection{Compatibility of the Brauer-Manin pairing with corestriction}
		%%%%%%%%%%%%%%%%%%%%%%%%%%%%%%%%%%%%%%%%%%%%%%%%%%%%%%%%%%%%%%%%%%

			The functoriality of the Brauer group and compatibility of the Brauer-Manin pairing with corestriction leads to the following containments of Brauer sets under extensions.
			\begin{mylem}[\cite{CV-QuadraticPoints}*{Lemma 3.1}]\label{lem:FunctorialityCor}
				Let \(K/k\) be an extension of number fields, let \(Y/k\) be a smooth projective geometryically integral variety over \(k\), and let \(B\subset \Br  Y_K\).  Then \(Y(\A_k)^{\textup{Cor}_{Y_K/Y_k}(B)}\subset Y(\A_K)^{B}\).  In particular,
				\begin{enumerate}
					\item if \(Y(\A_k)^{\Br}\neq\emptyset\) then \(Y(\A_K)^{\Br}\neq\emptyset\), and\label{part:BrSetContainment}
					\item for any \(d|[K:k]\), we have \(Y(\A_k)\subset Y(\A_K)^{(\Br  Y)[d]}\).\label{part:AdelicPointsContainment}
				\end{enumerate}
			\end{mylem}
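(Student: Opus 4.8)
The plan is to reduce both displayed consequences to a single compatibility statement, namely that the Brauer--Manin pairing intertwines corestriction with the base-change map $Y(\A_k)\to Y(\A_K)$, $P\mapsto P_K$, induced by $\A_k\hookrightarrow\A_K$. Write $\nu\colon Y_K\to Y$ for the finite \'etale degree-$[K:k]$ morphism coming from $K/k$, with restriction $\nu^*\colon \Br Y\to \Br Y_K$ and corestriction $\textup{Cor}\colon \Br Y_K\to \Br Y$. The heart of the argument is the identity
\[
  \langle \textup{Cor}(\beta), P\rangle_k = \langle \beta, P_K\rangle_K \qquad (\beta\in\Br Y_K,\ P\in Y(\A_k)),
\]
after which everything drops out formally.

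To establish the identity I would assemble one local and one geometric compatibility. Locally, for a place $w\mid v$ of $K$ over $v$ of $k$, class field theory gives $\inv_v\circ\textup{Cor}_{K_w/k_v} = \inv_w$ on $\Br K_w$ (Theorem~\ref{thm:CFT} fixes all these invariants compatibly). Geometrically, fix $P_v\in Y(k_v)$, regarded as $P_v\colon\Spec k_v\to Y$; pulling the cover $\nu$ back along $P_v$ produces the Cartesian square whose upper-left corner is $\Spec(k_v\otimes_k K)=\coprod_{w\mid v}\Spec K_w$, and base-change compatibility of the transfer for finite \'etale morphisms yields
\[
  P_v^*\,\textup{Cor}(\beta) = \sum_{w\mid v}\textup{Cor}_{K_w/k_v}\bigl(P_w^*\beta\bigr)\in\Br k_v,
\]
where $P_w\in Y(K_w)$ is the image of $P_v$. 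Applying $\inv_v$, invoking the local compatibility, and summing over all $v$ (so that $\sum_v\sum_{w\mid v}=\sum_w$) produces the displayed identity. I expect this geometric base-change step to be the main obstacle: it is the one place where one must use the functorial behavior of corestriction under pullback rather than a purely formal manipulation with invariants.

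Granting the identity, the inclusion $Y(\A_k)^{\textup{Cor}(B)}\subset Y(\A_K)^{B}$ is immediate, since if $P$ is orthogonal to every $\textup{Cor}(\beta)$ with $\beta\in B$, then $\langle\beta,P_K\rangle_K=\langle\textup{Cor}(\beta),P\rangle_k=0$ for all $\beta\in B$, so $P_K\in Y(\A_K)^{B}$. For part~\eqref{part:BrSetContainment} I would take $B=\Br Y_K$; because $\textup{Cor}(\Br Y_K)\subset\Br Y$ we have $Y(\A_k)^{\Br}\subset Y(\A_k)^{\textup{Cor}(\Br Y_K)}$, and the inclusion then sends a point of $Y(\A_k)^{\Br}$ to a point of $Y(\A_K)^{\Br}$, establishing nonemptiness. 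For part~\eqref{part:AdelicPointsContainment} I would take $B=\nu^*((\Br Y)[d])$ and use that $\textup{Cor}\circ\nu^*=[K:k]$ on $\Br Y$: when $d\mid[K:k]$ every class in $(\Br Y)[d]$ is annihilated by $[K:k]$, so $\textup{Cor}(B)=\{0\}$ and hence $Y(\A_k)=Y(\A_k)^{\{0\}}\subset Y(\A_K)^{\nu^*((\Br Y)[d])}=Y(\A_K)^{(\Br Y)[d]}$, the last equality being the definition of the right-hand superscript as orthogonality to the restricted classes.
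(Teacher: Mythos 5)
Your argument is correct, and it is the standard (and surely the intended) proof: the paper itself gives no proof of this lemma, deferring to \cite{CV-QuadraticPoints}*{Lemma 3.1}. The two ingredients you isolate --- the local compatibility \(\inv_v\circ\textup{Cor}_{K_w/k_v}=\inv_w\) and the base-change formula \(P_v^*\,\textup{Cor}(\beta)=\sum_{w\mid v}\textup{Cor}_{K_w/k_v}(P_w^*\beta)\) coming from \(\Spec(K\otimes_k k_v)=\coprod_{w\mid v}\Spec K_w\) --- are exactly what is needed, and the deductions of parts (1) and (2) from the main inclusion (taking \(B=\Br Y_K\), respectively \(B=\nu^*((\Br Y)[d])\) together with \(\textup{Cor}\circ\nu^*=[K:k]\)) are the expected formal consequences.
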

			\begin{myrmk}
				In an early draft of~\cite{CV-QuadraticPoints}, Creutz and I proved parts~\eqref{part:BrSetContainment} and~\eqref{part:AdelicPointsContainment} in our case of interest, quartic del Pezzo surfaces.  Upon review of that draft, Wittenberg observed that the lemma held more generally and outlined a proof of the version given here.
			\end{myrmk}
			From Lemma~\ref{lem:FunctorialityCor}, we can immediately deduce that if \(X(\A_k)^{\Br}\neq\emptyset\), then 
			\[
				\{L/k \textup{ finite}:X(\A_L)^{\Br} = \emptyset\} = \emptyset \quad\textup{and}\quad
				\{L/k \textup{ finite}:X(\A_L)^{\Br} \neq \emptyset\} = \{L/k \textup{ finite}\}.
			\]
			Thus, the main case of interest for this question is when \(X(\A_k)^{\Br} = \emptyset\).

			\subsection{Vanishing of the Brauer-Manin obstruction}
				In this section we retain the assumption that \(X(\A_k)^{\Br} = \emptyset\) and consider sufficient conditions for the Brauer-Manin obstruction to \emph{vanish} over the extension \(L/k\), i.e., such that \(X(\A_L)^{\Br}\neq\emptyset\).

				From Lemma~\ref{lem:FunctorialityCor}, we can deduce that if \(X(\A_k)\neq\emptyset\) \emph{and} the \([L:k]\)-torsion subgroup of \(\Br  X_L\) captures the Brauer-Manin obstruction, then the Brauer-Manin obstruction vanishes over \(L\).  Here are some example cases where these properties are known.

				\begin{myprop}[Corollary of Theorem~\ref{thm:ClassicalCapturing} and Lemma~\ref{lem:FunctorialityCor}]\label{prop:VanishingBM}\hfill
					\begin{enumerate}
						\item Let \(\pi\colon X \to \PP^1_k\) be an everywhere locally soluble conic bundle.  Then there exists a finite extension \(K/k\) such that for all even degree extensions \(L/k\) with \(L\) linearly disjoint from \(K\), we have \(X(\A_L)^{\Br}\neq\emptyset\).
						\item Let \(X \subset \PP^3_k\) be an everywhere locally soluble cubic surface.  Then there exists a finite extension \(K/k\) such that for all extensions \(L/k\) with \(L\) linearly disjoint from \(K\) and \([L:k]\equiv 0 \bmod 3\), we have \(X(\A_L)^{\Br}\neq\emptyset\).
						\item  Let \(X \subset \PP^4_k\) be an everywhere locally soluble quartic del Pezzo surface.  Then there exists a finite extension \(K/k\) such that for all even degree extensions \(L/k\) with \(L\) linearly disjoint from \(K\), we have \(X(\A_L)^{\Br}\neq\emptyset\).
					\end{enumerate}
				\end{myprop}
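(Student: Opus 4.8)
The plan is to derive all three parts from a single mechanism, namely part~\eqref{part:AdelicPointsContainment} of Lemma~\ref{lem:FunctorialityCor} together with the fact that, for these geometrically rational surfaces, $\Br X/\Br_0 X$ is controlled by $\HH^1(G_k,\Pic\Xbar)$ and is annihilated by a small integer $e$. First I would fix $K$ to be the \emph{splitting field of the Galois action on $\Pic\Xbar$}, i.e. the finite extension of $k$ cut out by the kernel of $G_k\to\operatorname{Aut}(\Pic\Xbar)$. The reason for this choice, combined with the linear disjointness hypothesis, is the following rigidity statement, which I expect to be the technical heart of the argument: \emph{if $L/k$ is linearly disjoint from $K$, then base change induces an isomorphism $\Br X/\Br_0X\xrightarrow{\ \sim\ }\Br X_L/\Br_0 X_L$.}

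I would prove this rigidity statement by inflation--restriction applied to $1\to G_K\to G_k\to\operatorname{Gal}(K/k)\to 1$. Because $X$ is geometrically rational, $\Br\Xbar=0$, so the sequence~\eqref{eq:Br1} identifies $\Br X/\Br_0 X$ with $\HH^1(G_k,\Pic\Xbar)$; and because $\Pic\Xbar$ is a torsion-free module on which $G_K$ acts trivially, $\HH^1(G_K,\Pic\Xbar)=\operatorname{Hom}(G_K,\Pic\Xbar)=0$, so inflation gives $\HH^1(\operatorname{Gal}(K/k),\Pic\Xbar)\xrightarrow{\ \sim\ }\HH^1(G_k,\Pic\Xbar)$. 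Linear disjointness yields $\operatorname{Gal}(LK/L)\cong\operatorname{Gal}(K/k)$, so the identical computation over $L$ gives $\HH^1(\operatorname{Gal}(LK/L),\Pic\Xbar)\xrightarrow{\ \sim\ }\HH^1(G_L,\Pic\Xbar)$, and under these identifications restriction becomes the identity; hence it is an isomorphism. Next I would read off $e$ from Theorem~\ref{thm:ClassicalCapturing}: a cubic surface has $\Br X[3]$ capturing, reflecting $e=3$; a quartic del Pezzo surface has $\Br X/\Br_0X$ of exponent $2$; and a conic bundle over $\PP^1$ has $\Br X/\Br_0 X$ of exponent $2$ as well, since its algebraic Brauer classes are quaternionic. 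In each part the degree hypothesis is precisely the assertion that $e\mid d$, where $d:=[L:k]$.

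Finally I would assemble the pieces. By Lemma~\ref{lem:FunctorialityCor}\eqref{part:AdelicPointsContainment}, the image of $X(\A_k)$ in $X(\A_L)$ is orthogonal to the restriction of $(\Br X)[d]$. Since $e\mid d$ and $\Br X/\Br_0 X$ has exponent $e$, every class of $\Br X/\Br_0 X$ is represented by a $d$-torsion class (here one uses that $\Br k$ is divisible), so by the rigidity isomorphism the restriction of $(\Br X)[d]$ generates $\Br X_L/\Br_0 X_L$. Because Brauer--Manin orthogonality depends only on Brauer classes modulo constants (Exercise~\ref{exer:BrmodBr0}), the image of $X(\A_k)$ therefore lies in $X(\A_L)^{\Br}$. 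As $X$ is everywhere locally soluble over $k$ we have $X(\A_k)\neq\emptyset$, and its image supplies the desired adelic point, so $X(\A_L)^{\Br}\neq\emptyset$. The one genuinely substantive step is the rigidity isomorphism of the second paragraph, which is where geometric rationality ($\Br\Xbar=0$) and the torsion-freeness of $\Pic\Xbar$ enter; once it is established, matching $e$ to the divisibility hypothesis on $[L:k]$ and invoking part~\eqref{part:AdelicPointsContainment} of Lemma~\ref{lem:FunctorialityCor} is routine.
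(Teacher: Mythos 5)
Your overall architecture is the intended one, and your ``rigidity'' step --- taking $K$ to be the splitting field of $\Pic\Xbar$ and using inflation--restriction with $\HH^1(G_K,\Pic\Xbar)=0$ (torsion-freeness) to show that $\Br X/\Br_0 X\to\Br X_L/\Br_0 X_L$ is an isomorphism for $L$ linearly disjoint from $K$ --- is exactly the role the hypothesis on $K$ plays; it is carried out correctly, and combined with Lemma~\ref{lem:FunctorialityCor}\eqref{part:AdelicPointsContainment} it does settle parts (1) and (3), where $\Br X/\Br_0 X$ genuinely has exponent $2$ and $2\mid[L:k]$.

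The gap is in part (2). You ``read off $e=3$'' for cubic surfaces from the fact that $\Br X[3]$ captures, but capturing is strictly weaker than ``$\Br X/\Br_0 X$ has exponent $3$'': for a cubic surface $\Br X/\Br_0 X$ can be $\Z/2$ or $(\Z/2)^2$ as well as $\Z/3$ or $(\Z/3)^2$, and the content of Swinnerton-Dyer's result (part (2) of Theorem~\ref{thm:ClassicalCapturing}) is precisely that the $2$-torsion classes never obstruct --- not that they are absent. Since the hypothesis is only $[L:k]\equiv 0\bmod 3$ (so $[L:k]$ may be odd), your claim that the restriction of $(\Br X)[d]$ generates all of $\Br X_L/\Br_0 X_L$ fails whenever $2$-torsion is present, and your final step then does not show that the image of $X(\A_k)$ is orthogonal to the whole Brauer group. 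The repair is to use the capturing statement itself rather than an exponent count: Lemma~\ref{lem:FunctorialityCor}\eqref{part:AdelicPointsContainment} together with your rigidity isomorphism (restricted to $3$-torsion) places the image of $X(\A_k)$ inside $X(\A_L)^{\Br X_L[3]}$, and part (2) of Theorem~\ref{thm:ClassicalCapturing} applied over $L$ then converts the nonemptiness of that set into $X(\A_L)^{\Br}\neq\emptyset$. A smaller point: $\Br k$ is \emph{not} divisible (the real places contribute $\Z/2$), so lifting a class of $\Br X/\Br_0 X$ killed by $e$ to an $e$-torsion element of $\Br X$ requires an argument; the everywhere local solubility of $X$ is what makes that lift possible.
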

				More generally, we can obtain a result of this flavor whenever \(X\) is a variety whose geometric Picard group is torsion free and whose geometric Brauer group is trivial (see also~\cite{Kanevsky}).

				For the varieties in Proposition~\ref{prop:VanishingBM}, the Brauer-Manin obstruction is conjecturally the only obstruction to the existence of rational points.  Thus, Proposition~\ref{prop:VanishingBM} suggests the existence of points over numerous field extensions.  What can we prove unconditionally?

				\begin{myprop}
					Let \(\pi\colon X\to \PP^1\) be an everywhere locally soluble conic bundle.  Then, for any finite set of places \(S\subset \Omega_k\) and any collection of quadratic etale algebras \((F_v)_{v\in S}\), there exists a quadratic extension \(L/k\) such that \(L\otimes_{k}k_v \simeq F_v\) and \(X(L)\neq\emptyset\).
				\end{myprop}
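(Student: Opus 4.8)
The plan is to exhibit the desired $L$-point inside a single smooth fibre of $\pi$. First I would locate a rational point $\theta\in\PP^1(k)$ whose fibre $C:=\pi^{-1}(\theta)$ is a smooth conic over $k$ with controlled local solubility relative to $S$, and then choose a quadratic extension $L=k(\sqrt d)$ that simultaneously splits $C$ and realises the prescribed algebras $F_v$ at the places of $S$. Since a smooth conic over a field $L$ acquires an $L$-point as soon as its class in $\Br L$ is trivial, and since $C=\pi^{-1}(\theta)\subset X$ gives $X(L)\supseteq C(L)$, the whole problem reduces to two approximation statements: one on the base $\PP^1$ to pin down $\theta$, and one on $k^{\times}$ to pin down the square class of $d$.

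For the first step I would use weak approximation on $\PP^1$ together with the openness of smooth local solubility. At each $v\in S$ for which $F_v\cong k_v\times k_v$ is split, everywhere local solubility provides a point $P_v\in X(k_v)$; since $X$ is smooth of dimension $2$ while $\pi$ has singular fibres only over a finite subset $T\subset\PP^1$, I may take $P_v$ to lie over the smooth locus, so that $\theta_v:=\pi(P_v)$ has smooth soluble fibre. Because smooth solubility is a $v$-adically open condition on the base, weak approximation lets me choose a single $\theta\in\PP^1(k)\setminus T$ close to all these $\theta_v$, so that $C:=\pi^{-1}(\theta)$ is a smooth conic over $k$ with $C(k_v)\neq\emptyset$ for every $v\in S$ at which $F_v$ is split.

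Having fixed $\theta$, I set $\Sigma:=\{v : C(k_v)=\emptyset\}$, which is finite by Proposition~\ref{prop:LocalSolubilityFiniteComputation} and, by construction, disjoint from the split places of $S$. Writing $F_v\cong k_v(\sqrt{e_v})$ for suitable $e_v\in k_v^{\times}$ (with $e_v$ a square exactly when $F_v$ is split), I would use weak approximation on $k^{\times}$ and the openness of the squares in each $k_v^{\times}$ to produce $d\in k^{\times}$ whose square class agrees with that of $e_v$ at each $v\in S$, which is a nonsquare at each $v\in\Sigma$, and which is a nonsquare at one auxiliary nonarchimedean place $v_0\notin S\cup\Sigma$ (the last condition forcing $L:=k(\sqrt d)$ to be a genuine field). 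These requirements are mutually consistent, since any overlap in $S\cap\Sigma$ occurs only at nonsplit $F_v$, where $e_v$ is already a nonsquare; and by construction $L\otimes_k k_v\cong k_v(\sqrt d)\cong F_v$ for all $v\in S$.

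Finally I would verify $C(L)\neq\emptyset$ through the Hasse--Minkowski theorem (Examples~\ref{ex:LGP}) applied over $L$, reducing to checking $C(L_w)\neq\emptyset$ for each place $w$ of $L$ lying over a place $v$ of $k$. If $v\notin\Sigma$ then the local class $[C]_v$ is already trivial, and if $v\in\Sigma$ then $d$ is a nonsquare at $v$, so $L_w/k_v$ is a quadratic field extension, which splits the nonsplit quaternion class $[C]_v$ because any quadratic field extension of a local field kills its quaternion division algebra (restriction multiplies local invariants by $2$). Thus $C(L_w)\neq\emptyset$ for every $w$, whence $C(L)\neq\emptyset$ and $X(L)\neq\emptyset$. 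The main obstacle is precisely the compatibility bookkeeping of this last choice of $d$: one must guarantee that the square conditions imposed by the split algebras $F_v$ never collide with the nonsquare conditions forced by $\Sigma$, which is exactly why the fibre $C$ is selected to be soluble at the split places of $S$ \emph{before} $d$ is chosen.
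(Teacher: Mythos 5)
Your proof is correct and follows essentially the same route as the paper: use weak approximation on \(\PP^1\) to select a fibre that is a conic soluble at the relevant places of \(S\), note that its set of insoluble places is finite, choose a quadratic extension realising the \(F_v\) and nonsplit at the insoluble places (where the local invariant is killed because restriction to a quadratic extension of a local field multiplies invariants by \(2\)), and conclude by the local-to-global principle for conics. The only cosmetic difference is that the paper arranges solubility of the fibre at \emph{all} of \(S\) so that the two sets of splitting conditions are supported on disjoint places, whereas you allow an overlap at nonsplit \(F_v\) and check consistency there directly.
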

				\begin{proof}
					By the implicit function theorem, for any \(x_v\in X(k_v)\) there exists an open set \(U_v\subset \PP^1(k_v)\) such that \(\pi(x_v)\subset U_v \subset \pi(X(k_v))\).  By weak approximation on \(\PP^1\), there exists a \(t\in \PP^1(k)\) such that \(t\in U_v\) for all \(v\in S\).  In particular, the fiber \(X_t\) is a conic that has \(k_v\)-points for all \(v\in k\).  Let \(T:=\{v\in \Omega_k :  X_t(k_v) = \emptyset\}\); note that \(T\) is finite (see Proposition~\ref{prop:LocalSolubilityFiniteComputation}) and disjoint from \(S\).  By~\cite{Neukirch-BonnLectures}*{Part II, Def. 1.3 and Thm. 5.6}, \(\inv_F\circ \textup{Res}_{F/k_v} = [F:k_v] \circ \inv_{k_v}\) for any extension of local fields \(F/k_v\).  Therefore, for any quadratic extenion \(K/k\), \(X_t(\A_K) \neq \emptyset\) if and only if \(v\) does not split in \(K\) for all \(v\in T\).  Since \(S\) and \(T\) are disjoint and finite, there exists a quadratic extension \(L/k\) that has the desired splitting behavior at all \(v\in S\cup T\).   Since conics satisfy the local-to-global principle and \(X_t(\A_L)\neq\emptyset\), this proves that \(X_t(L)\neq\emptyset\), as desired. 
				\end{proof}

				Analogous arguments can prove an analogous proposition for cubic surfaces (where quadratic extensions are replaced by cubic extensions).  For locally soluble quartic del Pezzo surfaces, we can unconditionally prove the existence of a quadratic point~\cite{CV-QuadraticPoints}*{Proposition 4.7 and Remark 4.8}; however, it is a nonexplicit construction and doesn't easily give control of local behavior.  Thus, our unconditional results over number fields fall far short of what is predicted by Proposition~\ref{prop:VanishingBM}.

				If \(X(\A_k) = \emptyset\), then Lemma~\ref{lem:FunctorialityCor} no longer can be leveraged to show that the Brauer-Manin obstruction vanishes over an extension.  Indeed, the lack of adelic points over \(k\) limits the construction of adelic points over the extension, and so it is intrinsically more difficult to prove that the obstruction vanishes.  Despite this added difficulty, there are some positive results in this direction.

			\begin{mythm}[\cite{Roven}]
				Let \(\pi\colon X \to \PP^1_k\) be a conic bundle.  Then there exists a finite extension \(K/k\) such that for all quadratic \(L/k\) linearly disjoint from \(K\), we have
				\[
					X(\A_L) \neq\emptyset\; \Leftrightarrow\; X(\A_L)^{\Br}\neq\emptyset.	
				\]
			\end{mythm}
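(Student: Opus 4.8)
The plan is to prove the nontrivial implication \(X(\A_L)\neq\emptyset\Rightarrow X(\A_L)^{\Br}\neq\emptyset\); the reverse is immediate since \(X(\A_L)^{\Br}\subset X(\A_L)\), and when \(X(\A_L)=\emptyset\) the equivalence is vacuous. Two structural inputs govern the choice of \(K\). First, a conic bundle over \(\PP^1_k\) is a geometrically rational surface, so by birational invariance (Theorem~\ref{thm:BrProps}) \(\Br\Xbar=0\) and hence \(\Br X=\Br_1 X\); moreover every class of \(\Br X/\Br_0 X\) is \emph{vertical}, i.e.\ pulled back from \(\Br\kk(t)\) along \(\pi\), and is \(2\)-torsion. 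These are standard features of conic bundles, reflecting that the generic fibre of \(\pi\) is a conic. Second, by~\eqref{eq:Br1} the group \(\Br X/\Br_0 X\cong\HH^1(G_k,\Pic\Xbar)\) is controlled by the finite Galois module \(\Pic\Xbar\). I therefore take \(K\) to contain the splitting field of \(\Pic\Xbar\). For \(L\) linearly disjoint from \(K\) one has \(\Gal(KL/L)\xrightarrow{\sim}\Gal(K/k)\), so \(\HH^1(G_L,\Pic\Xbar)\cong\HH^1(G_k,\Pic\Xbar)\) and restriction induces an isomorphism \(\Br X/\Br_0 X\xrightarrow{\sim}\Br X_L/\Br_0 X_L\). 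In particular \(\Br X_L/\Br_0 X_L\) is again \(2\)-torsion and is generated by restrictions of classes from \(X\).

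I would then split into two cases according to whether \(X(\A_k)\) is empty. Suppose first that \(X(\A_k)\neq\emptyset\). Because \(\Br X/\Br_0 X\) is \(2\)-torsion and \(\Br k\) is \(2\)-divisible (choose local square roots of invariants summing to \(0\)), every class of \(\Br X/\Br_0 X\) is represented by an element of \((\Br X)[2]\); hence the restrictions of \((\Br X)[2]\) generate \(\Br X_L\) modulo constants, and orthogonality to them coincides with orthogonality to all of \(\Br X_L\) (constant classes imposing no condition, by Exercise~\ref{exer:BrmodBr0}). Applying Lemma~\ref{lem:FunctorialityCor}\eqref{part:AdelicPointsContainment} with \(d=2\), which divides \([L:k]\), gives
\[
\emptyset\neq X(\A_k)\subset X(\A_L)^{(\Br X)[2]}=X(\A_L)^{\Br},
\]
so the obstruction vanishes. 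This step is exactly where the quadratic hypothesis and the \(2\)-torsion of the Brauer group of a conic bundle are used in tandem.

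The substantive case is \(X(\A_k)=\emptyset\) while \(X(\A_L)\neq\emptyset\). Let \(T=\{v:X(k_v)=\emptyset\}\), a finite set by Proposition~\ref{prop:LocalSolubilityFiniteComputation}; since local solubility can only improve over \(L\), the hypothesis \(X(\A_L)\neq\emptyset\) forces every \(v\in T\) to be nonsplit in \(L\). Starting from a given \((P_w)\in X(\A_L)\), verticality reduces the pairing to the base: writing \(t_w=\pi(P_w)\), one has \(\langle\alpha_{j,L},P_w\rangle=\gamma_j(t_w)\) for the generating classes \(\alpha_j=\pi^*\gamma_j\), so the Brauer--Manin condition becomes \(\sum_w\inv_w\gamma_j(t_w)=0\) for each \(j\). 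Only finitely many places contribute (Proposition~\ref{prop:LocalEvaluation}), and at a place \(v\) that \emph{splits} in \(L\) one may replace the two components over \(v\) by a common point of \(X(k_v)\) (which exists, as \(v\notin T\)); since \(\gamma_j\) is \(2\)-torsion the two equal invariants cancel. After this normalization the reciprocity defect \(\delta_j=\sum_{w}\inv_w\gamma_j(t_w)\) is concentrated at the finitely many \emph{nonsplit} places of bad reduction, and the goal is to adjust the components there so that every \(\delta_j\) vanishes while keeping each fibre \(X_{t_w}\) locally soluble.

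The hard part is precisely this last correction. At a place of good reduction the evaluation is constant by Proposition~\ref{prop:LocalEvaluation}\eqref{part:LocallyConstant}, so no correcting freedom is available there; the defects must be cancelled using the fixed finite set of nonsplit bad places (which includes \(T\), where solubility genuinely changed from \(k\) to \(L\)). I would show, using the quadratic local extensions \(L_w/k_v\) at these places together with the relation \(\inv_w\circ\mathrm{Res}_{L_w/k_v}=[L_w:k_v]\,\inv_{k_v}\), that the local evaluation maps \(\gamma_j(-)\colon X(L_w)\to\tfrac12\Z/\Z\) there have image as large as possible and are jointly surjective onto \((\tfrac12\Z/\Z)^{r}\) over the available nonsplit places; the genericity of \(L\), namely its linear disjointness from the enlarged \(K\), is what guarantees that these places present the needed independent flexibility. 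Granting this, one selects local points realizing the correction vector \((\delta_j)_j\), obtains an adelic point orthogonal to every \(\alpha_{j,L}\), and concludes \(X(\A_L)^{\Br}\neq\emptyset\). Establishing the joint surjectivity at the constrained set of nonsplit bad places---equivalently, that the vertical reciprocity obstruction over \(L\) is always breakable---is the crux on which the whole argument turns.
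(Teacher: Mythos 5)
The paper itself offers no proof of this statement---it is quoted directly from Roven's work \cite{Roven}---so your proposal can only be judged on its own terms. It correctly disposes of the easy parts: the reverse implication is trivial, the reduction to $2$-torsion vertical classes is the right structural input, and the case $X(\A_k)\neq\emptyset$ follows from Lemma~\ref{lem:FunctorialityCor}\eqref{part:AdelicPointsContainment} exactly as you say (this case is in substance Proposition~\ref{prop:VanishingBM}(1)). But the case $X(\A_k)=\emptyset$, $X(\A_L)\neq\emptyset$ is the entire content of the theorem, and there your argument reduces to the assertion that the evaluation maps $\gamma_j(-)\colon X(L_w)\to\tfrac12\Z/\Z$ at the nonsplit bad places are ``jointly surjective onto $(\tfrac12\Z/\Z)^r$,'' which you do not prove and which is not obviously true: the set of usable places is fixed by $X$ and $L$, the number $r$ of independent generators can be large, and at a single place the $r$ evaluation maps need not vary independently (at a place of good reduction they do not vary at all, as you note). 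Nothing in the proposal explains why the specific reciprocity defect $(\delta_j)_j$ that arises must lie in the image of the available local variation; establishing that---via a concrete analysis of the degenerate fibres of $\pi$, their splitting fields, and a choice of $K$ encoding considerably more than the splitting field of $\Pic\Xbar$---is where all the work lies. As written, the proposal proves nothing beyond what Lemma~\ref{lem:FunctorialityCor} already yields.

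A secondary error: you justify lifting classes of $\Br X/\Br_0X$ to $(\Br X)[2]$ by claiming $\Br k$ is $2$-divisible. It is not when $k$ has a real place, since $\Br\R=\tfrac12\Z/\Z$ contains no element $\beta$ with $2\beta$ nontrivial; an element of $\Br k$ with invariant $\tfrac12$ at a real place cannot be halved. The conclusion you want is nevertheless true for conic bundles, because $\Br X/\Br_0X$ is generated by classes of quaternion algebras $(f(t),a)$ pulled back from the base, which are honestly $2$-torsion in $\Br X$; you should argue via that explicit description rather than via divisibility of $\Br k$. This repair is routine; the gap in the previous paragraph is not.
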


			\begin{mythm}[\cite{Roven}]\label{thm:RovenChatelet}
				Let \(\pi\colon X \to \PP^1_k\) be a Ch\^atelet surface.  Then there exists a finite extension \(K/k\) such that for all even degree \(L/k\) linearly disjoint from \(K\), we have
				\[
					X(\A_L) \neq\emptyset\; \Leftrightarrow\; X(\A_L)^{\Br}\neq\emptyset.	
				\]
			\end{mythm}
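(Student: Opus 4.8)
The plan is to establish the nontrivial implication $X(\A_L)\neq\emptyset\Rightarrow X(\A_L)^{\Br}\neq\emptyset$; the reverse is immediate from $X(\A_L)^{\Br}\subset X(\A_L)$. Write the Châtelet surface as the smooth projective model of $y^2-az^2=P(x)$ with $a\in k^{\times}$ and $P$ separable, and factor $P=c\prod_i g_i$ over $k$. Since $X$ is geometrically rational we have $\Br\Xbar=0$ and $\Pic\Xbar$ torsion free, so \eqref{eq:Br1} and \eqref{eq:BrmodBr1} give $\Br X_L/\Br_0 X_L\cong\HH^1(G_L,\Pic\Xbar)$, a finite group killed by $2$ and generated by the quaternion classes $\alpha_i:=(a,g_i(x))$. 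I would take $K$ to contain $k(\sqrt a)$ together with the splitting field of $P$, so that the $G_k$-action on $\Pic\Xbar$ is resolved over $K$; linear disjointness of $L$ from $K$ then forces $\HH^1(G_L,\Pic\Xbar)=\HH^1(G_k,\Pic\Xbar)$, so the base changes of the $\alpha_i$ still generate $\Br X_L/\Br_0 X_L$, and by Exercise~\ref{exer:BrmodBr0} it suffices to control orthogonality against $\{\alpha_i\}$. If $a$ is already a square in $k$ then every $\alpha_i$ is trivial, $\Br X_L=\Br_0 X_L$, and $X(\A_L)^{\Br}=X(\A_L)$, so that case is vacuous; otherwise linear disjointness gives $\sqrt a\notin L$.

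I would next dispose of the case $X(\A_k)\neq\emptyset$, which is where the even degree enters most cleanly. Applying Lemma~\ref{lem:FunctorialityCor}\eqref{part:AdelicPointsContainment} with $d=2\mid[L:k]$ gives $X(\A_k)\subset X(\A_L)^{(\Br X)[2]}$. Because the base changes of the $\alpha_i$ lie in $(\Br X)[2]$ and generate $\Br X_L/\Br_0 X_L$, one checks $X(\A_L)^{(\Br X)[2]}=X(\A_L)^{\{\alpha_i\}}=X(\A_L)^{\Br}$. Thus a single $k$-adelic point, viewed over $L$, already lands in the Brauer–Manin set, so $X(\A_L)^{\Br}\neq\emptyset$ and the desired equivalence holds.

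The substantive case is $X(\A_k)=\emptyset$, where Lemma~\ref{lem:FunctorialityCor} gives nothing and adelic points over $L$ must be built by hand. Given $(P_w)\in X(\A_L)$, set $c_i:=\sum_w\inv_w\langle\alpha_i,P_w\rangle\in\tfrac12\Z/\Z$; the goal is to modify the point so that every $c_i$ vanishes. For $P\in X(L_w)$ with $P(x)\neq0$ the value $P(x)$ is a norm from $L_w(\sqrt a)$, so $(a,P(x))_w=0$, and by the bimultiplicativity in Remark~\ref{rmk:ProductsOfQuaternions} this reads $\sum_i\langle\alpha_i,P\rangle=(a,c)_w$, a constant depending only on $w$. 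Summing over $w$ and invoking the product formula in Theorem~\ref{thm:CFT} yields the reciprocity relation
\[
\sum_i c_i \;=\; \sum_w\inv_w(a,c)_w \;=\; 0.
\]
I would then produce an auxiliary place $w_0$ of good reduction, with large residue field and with $a$ a nonsquare unit, at which the factors $g_i$ can independently realize odd valuations, so that the joint evaluation $X(L_{w_0})\to(\tfrac12\Z/\Z)^{r}$, $P\mapsto(\langle\alpha_i,P\rangle)_i$, surjects onto the full coset $\{(\epsilon_i):\sum_i\epsilon_i=(a,c)_{w_0}\}$. Since $\sum_i c_i=0$, the correction $(-c_i)_i$ lies in the translation subgroup $\{\sum_i\epsilon_i=0\}$, so I may pick $Q_{w_0}\in X(L_{w_0})$ with $\langle\alpha_i,Q_{w_0}\rangle=\langle\alpha_i,P_{w_0}\rangle-c_i$ for all $i$. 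Replacing the $w_0$-component of $(P_w)$ by $Q_{w_0}$ and fixing the rest produces, exactly as in the point-modification argument used for surjective evaluation maps earlier, an adelic point $(Q_w)\in X(\A_L)^{\{\alpha_i\}}=X(\A_L)^{\Br}$.

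The main obstacle is the construction of $w_0$ (or, if necessary, of several such places, one handling each generator). I need a place of $L$ at which $a$ is simultaneously a nonsquare \emph{and} the relevant $g_i$ acquire roots, so that each Hilbert symbol $(a,g_i(x))_{w_0}$ can be switched on and off by choosing $x\in L_{w_0}$ near a root; the tension between these requirements is governed by the position of $\sqrt a$ relative to the splitting field of $P$, and when $\sqrt a$ lies in that splitting field one must instead exploit ramified or inert places of $L$ over $k$. This is precisely where the hypotheses do their work: linear disjointness of $L$ from $K$ transports a Chebotarev argument from $\Gal(K/k)$ to $\Gal(LK/L)$ to locate places of $L$ with prescribed Frobenius, while the even degree provides the room — unavailable over $k$, where the Hasse principle genuinely fails — to arrange the splitting and ramification so that the combined local evaluation images cover the required coset. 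Making the surjectivity onto the coset rigorous is a version of Harari's formal lemma (cf.\ Harari's proposition quoted above), and verifying that the hypotheses guarantee its applicability is the technical heart of the argument.
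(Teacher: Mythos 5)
The paper itself contains no proof of this theorem --- it is quoted from Roven's work --- so there is no internal argument to compare against; I can only assess your sketch on its own terms. The skeleton is reasonable: the trivial direction, the identification of $\Br X_L/\Br_0 X_L$ with the classes $(a,g_i(x))$ after passing to $L$ linearly disjoint from a splitting field $K$, the disposal of the case $X(\A_k)\neq\emptyset$ via Lemma~\ref{lem:FunctorialityCor}\eqref{part:AdelicPointsContainment}, and the reciprocity relation $\sum_i c_i=0$ coming from $\sum_i(a,g_i(x))=(a,c)$ and the product formula are all correct and are the standard frame for results of this type.

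The gap is in the construction of the correcting place $w_0$, and it is not a deferred technicality: as you have set it up, the step fails. You ask for $w_0$ to be a place of \emph{good reduction}, with $a$ a nonsquare unit, at which the joint evaluation $X(L_{w_0})\to(\tfrac12\Z/\Z)^r$ surjects onto the coset $\{\,(\epsilon_i):\sum_i\epsilon_i=(a,c)_{w_0}\}$. But at such a place $a$ becomes a square in $L_{w_0}^{nr}$, so each $\alpha_i$ dies over the unramified closure and Proposition~\ref{prop:LocalEvaluation}\eqref{part:LocallyConstant} forces every $\langle\alpha_i,-\rangle$ to be \emph{constant} on $X(L_{w_0})$. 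Concretely: since the roots of $P$ stay distinct modulo $w_0$, an $x$-coordinate can make at most one $g_i(x)$ have odd valuation, and then $c\prod_j g_j(x)$ has odd valuation and is not a norm from the unramified extension $L_{w_0}(\sqrt a)$, so no point of $X(L_{w_0})$ lies over such $x$; the image of the joint evaluation at any good place is a single vector, not the coset. Harari's formal lemma cannot rescue this, because it produces nonconstancy only for classes of $\Br U\setminus\Br X$, whereas your $\alpha_i$ lie in $\Br X$. Any correct argument must extract the needed variation from the archimedean places and the finitely many places of bad reduction of $X$ --- places that cannot be chosen by Chebotarev and whose behavior in $L$ is exactly what the even-degree and linear-disjointness hypotheses must be shown to control (compare the Nakahara--Roven criterion quoted just above the theorem, which locates all nonconstancy at real places with $a<0$ or at finite $v$ where $P\bmod v$ is inseparable and $a\notin k_v^{\times2}$). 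That analysis at the fixed bad places is the actual content of the theorem, and the sketch does not engage with it.
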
	
            \begin{myrmk}
                Colliot-Th\'el\`ene, Sansuc, and Swinnerton-Dyer~\cites{CTSSDI, CTSSDII} have proved that for Ch\^atelet surfaces over number fields, the set of \(k\)-rational points are dense in the Brauer-Manin set.  Using this one can deduce from Theorem~\ref{thm:RovenChatelet} that for all even degree \(L/k\) linearly disjoint from \(K\), \(X\) satisfies the local-to-global principle over \(L\).
            \end{myrmk}
			\begin{mythm}[Special case of~\cite{CV-QuadraticPoints}*{Thm. 1.2}]
				Let \(X\subset \PP^4\) be a quartic del Pezzo surface over a number field \(k\).  Then for all places \(v\), there exists a quadratic extension \(L_w/k_v\) such that \(X(L_w) \neq \emptyset\).  Furthermore, if all rank \(4\) quadrics containing \(X\) are defined over \(k\), then there exists a quadratic \(L/k\) such that \(X(\A_L)^{\Br} \neq \emptyset\).
			\end{mythm}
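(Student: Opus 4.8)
The plan is to treat the local assertion and the global ``furthermore'' separately, exploiting throughout the description of $X$ as an intersection of two quadrics $X = V(Q_1)\cap V(Q_2)\subset \PP^4$ together with the pencil $\{V(\lambda Q_1+\mu Q_2)\}$. Its discriminant $\det(\lambda A_1+\mu A_2)$ is a binary quintic whose five geometric roots give the five rank-$4$ quadrics $Q_i$ in the pencil; each $Q_i$ is a cone, with vertex a point $P_i\notin X$ (the vertex lies off $X$ since $X$ is smooth: at a vertex the two quadrics cannot meet transversally), over a smooth quadric surface $S_i\subset\PP^3$. Projection away from $P_i$ gives a degree-$2$ morphism $\pi_i\colon X\to S_i$, so a point of $S_i$ over a field $F$ pulls back to a closed point of $X$ of degree $\le 2$ over $F$. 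For the global statement I will additionally use that, for a quartic del Pezzo surface, $\Br X/\Br_0 X$ is finite and $2$-torsion and that a single Brauer class captures the Brauer--Manin obstruction (Theorem~\ref{thm:ClassicalCapturing}(3)).

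For the local assertion the archimedean places are immediate: take $L_w=\C$, so $X(L_w)\supseteq X(\C)\neq\emptyset$. For nonarchimedean $v$, the proof of Proposition~\ref{prop:LocalSolubilityFiniteComputation} shows $X(k_v)\neq\emptyset$ for all but finitely many $v$, where one may take any quadratic $L_w/k_v$. It remains to produce, at each of the finitely many remaining places, a closed point of $X$ of degree $\le 2$. The robust mechanism is the projection $\pi_i$: over a local field every smooth quadric surface acquires a rational point after a suitable quadratic extension (indeed every quadratic extension of $k_v$ embeds into the quaternion algebra attached to $S_i$), and pulling such a point back along $\pi_i$ gives a point of $X$ over a quadratic extension. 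The delicate point—the one I expect to require the genuine work of~\cite{CV-QuadraticPoints}—is that $Q_i$, hence $\pi_i$, is a priori defined only over the residue extension cut out by a root of the quintic, which need not be quadratic over $k_v$; controlling the field of definition so that the resulting point lies over a quadratic extension of $k_v$ (and not of that larger field) is where a case analysis of the Galois action on the five rank-$4$ quadrics, equivalently on the sixteen lines of $X$, is needed.

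For the ``furthermore,'' the hypothesis that all rank-$4$ quadrics are defined over $k$ means the discriminant quintic splits into five $k$-points, so all $Q_i,P_i,S_i,\pi_i$ are defined over $k$, the discriminants $d_i:=\mathrm{disc}(S_i)\in k^\times/k^{\times2}$ are defined over $k$, and the finitely many generators of $\Br X/\Br_0 X$ are explicit Hilbert-symbol classes in the $d_i$ and the pencil coordinate. First I would assemble the local quadratic points into a global one: choosing a quadratic étale $k_v$-algebra $F_v$ with $X(F_v)\neq\emptyset$ at each place (split at the good places, the field $L_w$ above at the rest) and realizing a single quadratic $L/k$ with $L\otimes_k k_v\simeq F_v$ at all relevant $v$ (a Grunwald--Wang problem), one obtains $X(\A_L)\neq\emptyset$. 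Then I would upgrade this to $X(\A_L)^{\Br}\neq\emptyset$: by Theorem~\ref{thm:ClassicalCapturing}(3) over $L$ there is a single class $\alpha\in\Br X_L$ capturing the obstruction, so it suffices to make $X(\A_L)^{\{\alpha\}}\neq\emptyset$, and by part~\eqref{part:surjective} of the Lemma on local evaluation maps this holds once there is a place $w$ of $L$ at which $\langle\alpha_w,-\rangle$ is surjective onto $\tfrac1{\ord(\alpha)}\Z/\Z$—and since $\alpha$ is a Hilbert-symbol class, such a place can be forced by prescribing the splitting of $L$ at an auxiliary prime.

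The main obstacle is carrying out both steps with one and the same quadratic field $L$: the local-solubility conditions, the surjectivity condition forcing $X(\A_L)^{\{\alpha\}}\neq\emptyset$, and the global reciprocity relation $\sum_w\inv_w=0$ of Theorem~\ref{thm:CFT} must be made simultaneously consistent. Because $\Br X/\Br_0 X$ is $2$-primary, the resulting existence problem for $L$ is a Grunwald--Wang problem at the prime $2$, exactly where the Grunwald--Wang special case can prevent realizing prescribed local behavior by a global quadratic extension; showing this special case either does not arise or can be absorbed—using the product-formula relations among the $d_i$ forced by their being the discriminants of a single pencil—is, I expect, the heart of the proof. When $X(\A_k)\neq\emptyset$ one can shortcut the first step via Lemma~\ref{lem:FunctorialityCor}\eqref{part:AdelicPointsContainment}, which places $X(\A_k)$ inside $X(\A_L)^{(\Br X)[2]}$ for quadratic $L$; the substance is the locally insoluble case $X(\A_k)=\emptyset$, where the adelic point over $L$ must be built by hand as above.
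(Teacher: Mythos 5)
First, note that the paper itself gives no proof of this statement: it is quoted as a special case of \cite{CV-QuadraticPoints}*{Thm. 1.2} with no argument supplied, so there is no internal proof to compare yours against and your proposal must stand on its own. Read that way, it is an accurate road map --- you correctly identify the pencil of quadrics, the five rank-$4$ members and the degree-$2$ projections $\pi_i\colon X\to S_i$ from their vertices, the $2$-torsion of $\Br X/\Br_0 X$, the capturing results, and the Grunwald--Wang assembly --- but it is not a proof: both halves rest on steps that are either deferred or, in one instance, incorrect as stated.

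The incorrect step is in the local half. You assert that a point of $S_i$ over a quadratic extension $M/k_v$ pulls back along $\pi_i$ to ``a point of $X$ over a quadratic extension.'' It does not: the fibre of $\pi_i$ over an $M$-point of $S_i$ is a closed subscheme of degree $2$ over $M$, so this produces a closed point of $X$ of degree up to $4$ over $k_v$, not the quadratic point the theorem asserts. Your mechanism yields a genuine quadratic point of $X/k_v$ only when some rank-$4$ quadric $Q_i$ is defined over $k_v$ \emph{and} $S_i(k_v)\neq\emptyset$; when $S_i$ is the anisotropic quadric surface over $k_v$ (in which case $X(k_v)=\emptyset$, since $X$ lies on the cone over $S_i$), or when no $Q_i$ is defined over $k_v$ because the discriminant quintic is irreducible there, the argument gives nothing --- and these are exactly the hard cases. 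You flag the field-of-definition issue yourself, but the degree-doubling problem is an additional, unacknowledged gap. In the global half, realizing prescribed quadratic \'etale algebras at finitely many places by a single quadratic $L/k$ is unproblematic (the Grunwald--Wang special case does not affect quadratic extensions), but the step that makes the theorem true --- choosing the \emph{same} $L$ so that the capturing class pairs to zero with some adelic point --- is precisely what you defer. The device you propose for it, forcing surjectivity of $\langle\alpha_w,-\rangle$ at an auxiliary place by prescribing the splitting of $L$ there, is not justified: by Proposition~\ref{prop:LocalEvaluation} the evaluation map is constant at places of good reduction where the class is unramified, so whether a surjective place exists is a geometric fact about $X$ and $\alpha$ rather than something one can impose by choosing $L$; moreover the capturing class of Theorem~\ref{thm:ClassicalCapturing} lives over $L$ and is not known before $L$ is chosen. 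So both the local assertion and the ``furthermore'' remain unproved; what you have is a candid inventory of where the actual work of \cite{CV-QuadraticPoints} must happen.
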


		\subsection{Persistence of a Brauer-Manin obstruction}
			In this section we study the complementary question of conditions on \(L/k\) that guarantee that a Brauer-Manin obstruction persists, i.e., that \(X(\A_k)^{\Br} = \emptyset\) implies that \(X(\A_L)^{\Br} = \emptyset\).  We should think of this as a harder condition to achieve, because it typically becomes easier to obtain points over extensions.  

			However, there are some simple varieties where we can prove persistence.
			\begin{mythm}[Corollary of Springer's theorem~\cite{Springer}] Let \(X\) be a smooth quadric over a number field \(k\).  Then a Brauer-Manin obstruction persists over any odd degree extension, i.e., \(X(\A_k)^{\Br} = \emptyset\) implies that \(X(\A_L)^{\Br}=\emptyset\) for all odd degree extensions \(L/k\).
			\end{mythm}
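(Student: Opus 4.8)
The plan is to translate the emptiness of each Brauer--Manin set into the emptiness of a set of rational points, where Springer's theorem applies directly, and then to translate back. First I would record the following observation for a smooth quadric \(X\) over an arbitrary number field \(k\): we always have the containments \(X(k)\subset X(\A_k)^{\Br}\subset X(\A_k)\), and by the Hasse--Minkowski theorem (Examples~\ref{ex:LGP}) quadrics satisfy the local-to-global principle, so \(X(\A_k)\neq\emptyset\) if and only if \(X(k)\neq\emptyset\). Combining these, all three sets are simultaneously empty, so \(X(\A_k)^{\Br}=\emptyset\) if and only if \(X(k)=\emptyset\). In particular, for quadrics the vanishing of the Brauer--Manin set is not a genuinely finer phenomenon than the absence of rational points; it merely records \(X(k)=\emptyset\).

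Next, since \(L/k\) is finite, \(L\) is again a number field and \(X_L\) is again a smooth quadric over \(L\), so the same observation applied over \(L\) gives \(X(\A_L)^{\Br}=\emptyset\) if and only if \(X(L)=\emptyset\). Thus the theorem is equivalent to the purely arithmetic statement that \(X(k)=\emptyset\) implies \(X(L)=\emptyset\) for every odd degree extension \(L/k\). Its contrapositive, namely that \(X(L)\neq\emptyset\) for some odd degree \(L/k\) implies \(X(k)\neq\emptyset\), is exactly Springer's theorem~\cite{Springer}: a quadratic form over a field of characteristic \(\neq 2\) that becomes isotropic over an odd degree extension is already isotropic, equivalently a smooth quadric acquires a \(k\)-point as soon as it has a point over an odd degree extension. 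Applying this to \(X\) over \(k\) completes the chain of implications \(X(\A_k)^{\Br}=\emptyset \Rightarrow X(k)=\emptyset \Rightarrow X(L)=\emptyset \Rightarrow X(\A_L)^{\Br}=\emptyset\).

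The only genuine input is Springer's theorem, which is cited; everything else is bookkeeping with the containments \(X(k)\subset X(\A_k)^{\Br}\subset X(\A_k)\) and Hasse--Minkowski, applied over both \(k\) and \(L\). Consequently I expect no real obstacle in the derivation itself. The subtlety is conceptual rather than technical: one must first recognize that for quadrics the Brauer--Manin condition collapses to solubility, so that persistence of the obstruction is nothing more than the failure of points to descend along odd degree extensions, precisely the failure that Springer's theorem rules out.
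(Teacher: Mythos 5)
Your argument is correct and is exactly the derivation the paper intends by labeling the statement a ``Corollary of Springer's theorem'': Hasse--Minkowski collapses the chain \(X(k)\subset X(\A_k)^{\Br}\subset X(\A_k)\) so that emptiness of the Brauer--Manin set over any number field is equivalent to the absence of rational points, and Springer's theorem then transports that absence to every odd degree extension. No gaps; the writeup matches the paper's (implicit) proof.
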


			\begin{mythm}[Corollary of~\cite{CTC}*{Thm. C} and~\cite{CTP}*{Remark 3}] Let \(X\) be a quartic del Pezzo surface over a number field \(k\).  Then a Brauer-Manin obstruction persists over any odd degree extension, i.e., \(X(\A_k)^{\Br} = \emptyset\) implies that \(X(\A_L)^{\Br}=\emptyset\) for all odd degree extensions \(L/k\).
			\end{mythm}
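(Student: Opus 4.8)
The plan is to combine the fact that a single class captures the obstruction on a quartic del Pezzo surface (Theorem~\ref{thm:ClassicalCapturing}) with the parity phenomenon underlying reciprocity for \(2\)-torsion classes (Theorem~\ref{thm:CFT}), exploiting that an odd integer acts invertibly on a \(2\)-torsion group.

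First I would record two structural facts about a quartic del Pezzo surface \(X\). It is classical that \(\Br X/\Br_0 X\) is \(2\)-torsion, so any \(\alpha\in\Br X\) satisfies \(2\alpha\in\Br_0 X\); write \(2\alpha=\pi^*\alpha_0\) with \(\alpha_0\in\Br k\). By Theorem~\ref{thm:ClassicalCapturing} there is such an \(\alpha\) with \(\{\alpha\}\) capturing the Brauer-Manin obstruction. Since \(X(\A_L)^{\Br}\subseteq X(\A_L)^{\alpha_L}\), it suffices to prove \(X(\A_L)^{\alpha_L}=\emptyset\). If \(X(\A_L)=\emptyset\) this is automatic, so I assume \(X(\A_L)\neq\emptyset\); because \([L:k]\) is odd, for each place \(v\) of \(k\) at least one \(w\mid v\) has \([L_w:k_v]\) odd, and a Springer-type descent for intersections of two quadrics (the role of \cite{CTC}*{Thm. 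C}) then yields \(X(k_v)\neq\emptyset\), so \(X(\A_k)\neq\emptyset\) as well.

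Next I would extract the local consequence of \(X(\A_k)^{\Br}=\emptyset\). Capturing gives \(X(\A_k)^{\alpha}=\emptyset\), and \(2\langle\alpha,P_v\rangle=\alpha_0|_{k_v}\) is independent of \(P_v\in X(k_v)\), so each local invariant \(\inv_v\langle\alpha,P_v\rangle\) lies in a fixed coset of \(\tfrac12\Z/\Z\). If the evaluation took two values at some place \(v_0\), one could flip the \(v_0\)-component of any adelic point by \(\tfrac12\) and arrange the global sum (which always lies in \(\{0,\tfrac12\}\) since \(2\sum_v\inv_v\langle\alpha,P_v\rangle=\sum_v\inv_v(\alpha_0|_{k_v})=0\)) to vanish, contradicting emptiness; hence \(\inv_v\langle\alpha,\cdot\rangle\equiv a_v\) is constant at every \(v\), and by reciprocity (Theorem~\ref{thm:CFT}, applied to \(\alpha_0\)) one gets \(\sum_v a_v=\tfrac12\neq0\). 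Base-changing a local point shows that on points of \(X(L_w)\) pulled back from \(X(k_v)\) the invariant equals \([L_w:k_v]a_v\); using \(\sum_v\sum_{w\mid v}[L_w:k_v]a_v=[L:k]\sum_v a_v\) together with \([L:k]\) odd gives the total invariant \([L:k]\cdot\tfrac12=\tfrac12\), exactly the nonvanishing needed for \(X(\A_L)^{\alpha_L}=\emptyset\).

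The main obstacle is to upgrade this computation from base-changed points to all of \(X(L_w)\): a priori a genuinely new \(L_w\)-point could realize a different local invariant and spoil the parity count. Closing this gap is precisely where the fine analysis of the local evaluation map on quartic del Pezzo surfaces is needed — the explicit description of the capturing class in \cite{CTP}*{Remark 3} together with \cite{CTC}*{Thm. C} — to guarantee that constancy of \(\langle\alpha,\cdot\rangle\) over \(k_v\) forces constancy of \(\langle\alpha_L,\cdot\rangle\) over each odd-degree completion \(L_w\). Granting this, the evaluation over \(L_w\) is the constant \([L_w:k_v]a_v\), the total invariant is \(\tfrac12\), and \(X(\A_L)^{\alpha_L}=\emptyset\), whence \(X(\A_L)^{\Br}=\emptyset\).
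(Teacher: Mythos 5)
Your architecture is the intended one; the paper itself gives no argument beyond the two citations, so the question is whether your outline actually closes. Most of it does: the reduction to a single $2$-torsion capturing class $\alpha$, the use of odd-degree descent for smooth intersections of two quadrics over each $k_v$ (the role of \cite{CTC}*{Thm. C}, which holds over arbitrary fields) to pass from $X(\A_L)\neq\emptyset$ to $X(\A_k)\neq\emptyset$, the deduction that $X(\A_k)^{\alpha}=\emptyset$ forces every evaluation map on $X(k_v)$ to be constant with $\sum_v a_v=\tfrac12$, and the bookkeeping $\sum_{w\mid v}[L_w:k_v]=[L:k]$ are all correct.

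The gap you flag at the end is, however, a genuine one, and it cannot be closed by the kind of argument you used over $k$. Over $k$ you derived constancy of $\langle\alpha,\cdot\rangle$ on $X(k_v)$ from the hypothesis $X(\A_k)^{\alpha}=\emptyset$; repeating that over $L$ would presuppose $X(\A_L)^{\alpha_L}=\emptyset$, which is the conclusion. What is needed is the purely local theorem that for a smooth quartic del Pezzo surface over \emph{any} local field $F$ of characteristic $0$ and any $\beta\in\Br Y$, the evaluation map $Y(F)\to\Br F$ is constant --- equivalently, that all $F$-points (indeed all zero-cycles of a given degree) are rationally equivalent, since Brauer evaluation factors through rational equivalence. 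This local statement is precisely the content of the cited results of Colliot-Th\'el\`ene--Coray as packaged in the lemma and remarks of Colliot-Th\'el\`ene--Poonen; applied over each $L_w$ it gives $a_w=[L_w:k_v]\,a_v=a_v$, and your parity count then finishes the proof. As written, though, the argument is incomplete without this input: a genuinely new point of $X(L_w)$ could a priori take the other value in its coset of $\tfrac12\Z/\Z$ and make the global sum vanish, exactly as you note. So your proposal is a correct skeleton that correctly locates the hard step, but the step you defer is the entire nontrivial content of the two references.
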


			\begin{mythm}[\cite{RV-cubics}*{Thm. 1.1}] Let \(X\) be a smooth cubic surface over a number field \(k\).  Then a Brauer-Manin obstruction persists over any extension with degree coprime to three, i.e., \(X(\A_k)^{\Br} = \emptyset\) implies that \(X(\A_L)^{\Br}=\emptyset\) for all \(L/k\) with \(3\nmid [L:k]\).
			\end{mythm}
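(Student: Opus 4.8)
The plan is to prove the contrapositive: assuming $X(\A_L)^{\Br}\neq\emptyset$ for a finite extension $L/k$ with $3\nmid[L:k]$, I would construct a point of $X(\A_k)^{\Br}$. Write $n:=[L:k]$ and $A:=\Br X/\Br_0 X$. The two structural facts I would lean on are, first, that for a smooth cubic surface $A$ is annihilated by $3$ — it is isomorphic to $\HH^1(G_k,\Pic\Xbar)$ (as $\Br\Xbar=0$ since $\Xbar$ is rational), and $\Pic\Xbar\cong\Z^7$ carries the $W(E_6)$-action — and second, that since $\gcd(n,3)=1$, multiplication by $n$ is an automorphism of the $3$-torsion group $A$. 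Because $\textup{Cor}_{L/k}\circ\textup{Res}_{L/k}=n$ on $\Br X$ and corestriction preserves $\Br_0$, the map $\textup{Res}\colon A\to A_L:=\Br X_L/\Br_0 X_L$ is split injective, and (using the reciprocity sequence of Theorem~\ref{thm:CFT} to discard constants) all Brauer--Manin conditions may be tested on $A$, with invariants landing in $\tfrac13\Z/\Z$.

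First I would descend local solubility, i.e.\ show $X(\A_k)\neq\emptyset$. Any $k_v$-rational line in $\PP^3$ meets $X$ in a degree-$3$ effective zero-cycle, so the index of $X_{k_v}$ divides $3$ and hence lies in $\{1,3\}$. Fix a place $v$; since $\sum_{w\mid v}[L_w:k_v]=n\not\equiv0\pmod 3$, some $w\mid v$ has $[L_w:k_v]$ prime to $3$, and the $L_w$-point yields a closed point of $X_{k_v}$ of degree prime to $3$. Thus the index of $X_{k_v}$ is prime to $3$, forcing it to equal $1$; the known local results for cubic surfaces (a smooth cubic surface over a nonarchimedean local field with a zero-cycle of degree prime to $3$ has a rational point, the archimedean case being automatic for an odd-degree hypersurface) then give $X(k_v)\neq\emptyset$. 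As $v$ was arbitrary, $X(\A_k)\neq\emptyset$.

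Next I would repackage the hypothesis in terms of zero-cycles. Evaluating $\alpha\in\Br X$ on a closed point $P$ via $\textup{Cor}_{\kappa(P)/F}$ extends the pairing to $\textup{CH}_0(X_F)$, is a rational-equivalence invariant, and satisfies $\inv_F\circ\textup{Cor}=\inv$ over local fields. Given $(P_w)\in X(\A_L)^{\Br}$ — which after an evaluation-preserving perturbation (evaluation is locally constant by Proposition~\ref{prop:LocalEvaluation}) I may assume has each $\kappa(P_w)=L_w$ — I set $z_v:=\sum_{w\mid v}[P_w]\in\textup{CH}_0(X_{k_v})$, a zero-cycle of degree $n$. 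The orthogonality $\sum_w\inv_{L_w}\langle\textup{Res}\,\alpha,P_w\rangle=0$ then reads $\sum_v\inv_v\langle\alpha,z_v\rangle=0$ for all $\alpha\in A$; that is, $(z_v)_v$ is a degree-$n$ adelic zero-cycle orthogonal to $\Br X$. Fixing an integer $m$ with $mn\equiv1\pmod 3$ and base points $Q_v^0\in X(k_v)$, the degree-$1$ cycle $z_v':=m z_v-(mn-1)[Q_v^0]$ satisfies $\langle\alpha,z_v'\rangle=m\langle\alpha,z_v\rangle$ for every $\alpha$, since $(mn-1)$ kills the $3$-torsion class $\langle\alpha,Q_v^0\rangle$.

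The main obstacle is the final local step: replacing each degree-$1$ Chow class $[z_v']$ by an honest rational point. Concretely, I need that on a smooth cubic surface over a local field with a rational point, the map $X(k_v)\to\textup{CH}_0(X_{k_v})$ surjects onto the degree-$1$ classes, so that there exists $Q_v\in X(k_v)$ with $\langle\alpha,Q_v\rangle=\langle\alpha,z_v'\rangle=m\langle\alpha,z_v\rangle$ for all $\alpha$. Granting this representability statement for zero-cycles on geometrically rational surfaces over local fields, the point $(Q_v)_v$ lies in $X(\A_k)$ (at the almost-all places of good reduction both sides vanish and any integral point works) and satisfies $\sum_v\inv_v\langle\alpha,Q_v\rangle=m\sum_v\inv_v\langle\alpha,z_v\rangle=0$ for every $\alpha\in A$, whence $(Q_v)_v\in X(\A_k)^{\Br}$. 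I expect the crux to be exactly this local representability for cubic surfaces, together with pinning down the precise local point-existence input used in the solubility descent; once these are in hand the global argument is just the prime-to-$3$ rescaling made available by the $3$-torsion of $A$.
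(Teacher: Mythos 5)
These notes do not actually prove this theorem; it is quoted from \cite{RV-cubics}. That said, your outline tracks the strategy of the cited paper quite closely: push the adelic point on \(X_L\) down to an adelic zero-cycle of degree \(n\) orthogonal to \(\Br X\) via corestriction (Lemma~\ref{lem:FunctorialityCor}), use \(3\nmid n\) together with local solubility to rescale to degree \(1\), and then convert degree-\(1\) local zero-cycle classes back into rational points with the same Brauer evaluations. The two local facts you ``grant'' are genuine theorems and are exactly the hard inputs: a smooth cubic surface over a \(p\)-adic field with a closed point of degree prime to \(3\) has a rational point (Coray), and on a smooth cubic surface over a local field with a rational point every zero-cycle of degree \(1\) is rationally equivalent to a rational point (\cite{CTC}*{Thm. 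C}). So the architecture is sound and correctly locates the deep local content.

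There is, however, one genuine error: \(\Br X/\Br_0 X\) is \emph{not} annihilated by \(3\) for a general smooth cubic surface. By Swinnerton-Dyer's classification, \(\HH^1(G_k,\Pic\Xbar)\) can be \(\Z/2\Z\) or \((\Z/2\Z)^2\) as well as \(0\), \(\Z/3\Z\), or \((\Z/3\Z)^2\). Consequently your rescaling \(z_v'=mz_v-(mn-1)[Q_v^0]\) only preserves orthogonality to \(3\)-torsion classes: for \(\alpha\) of order \(2\) the term \((mn-1)\langle\alpha,Q_v^0\rangle\) need not vanish, and the adelic point \((Q_v)_v\) you produce is only guaranteed to lie in \(X(\A_k)^{\Br X[3]}\), not in \(X(\A_k)^{\Br}\). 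The repair is Swinnerton-Dyer's theorem, quoted here as Theorem~\ref{thm:ClassicalCapturing}(2): \(\Br X[3]\) completely captures the Brauer--Manin obstruction, so \(X(\A_k)^{\Br X[3]}\neq\emptyset\) already forces \(X(\A_k)^{\Br}\neq\emptyset\). With that citation added (and Coray and \cite{CTC}*{Thm. C} supplied for the two local steps), your argument closes, and it is essentially the proof given in \cite{RV-cubics}.
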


%%%%%%%%%%%%%%%%%%%%%%%%%%%%%%%%%%%%%%%%%%%%%%%%%%%%%%%%%%%%%%%%%%%%%
%%%%%%%%%%%%%%%%%%%%%%%%%%%%%%%%%%%%%%%%%%%%%%%%%%%%%%%%%%%%%%%%%%%%%
%%%%%%%%%%%%%%%%%%%%%%%%%%%%%%%%%%%%%%%%%%%%%%%%%%%%%%%%%%%%%%%%%%%%%

%%%%%%%%%%%%%%%%%%%%%%%%%%%%%%%%%%%%%%%%%%%%%%%%%%%%%%%%%%%%%%%%%%%%%
%    
% To add references to your document, replace the two \bib commands below.
% 
%%%%%%%%%%%%%%%%%%%%%%%%%%%%%%%%%%%%%%%%%%%%%%%%%%%%%%%%%%%%%%%%%%%%%

\begin{bibdiv}
	\begin{biblist}

% Authors: insert your references here

\bib{BVA}{article}{
   author={Berg, Jennifer},
   author={V\'{a}rilly-Alvarado, Anthony},
   title={Odd order obstructions to the Hasse principle on general K3
   surfaces},
   journal={Math. Comp.},
   volume={89},
   date={2020},
   number={323},
   pages={1395--1416},
   issn={0025-5718},
   review={\MR{4063322}},
   doi={10.1090/mcom/3485},
}

\bib{BSD-dp4}{article}{
	author={Birch, B. J.},
	author={Swinnerton-Dyer, H. P. F.},
	title={The Hasse problem for rational surfaces},
	journal={J. Reine Angew. Math.},
	volume={274(275)},
	date={1975},
	pages={164--174},
	issn={0075-4102},
	review={\MR{429913}},
	doi={10.1515/crll.1975.274-275.164},
 }
 
 \bib{Bright-BadReduction}{article}{
   author={Bright, Martin},
   title={Bad reduction of the Brauer-Manin obstruction},
   journal={J. Lond. Math. Soc. (2)},
   volume={91},
   date={2015},
   number={3},
   pages={643--666},
   issn={0024-6107},
   review={\MR{3355119}},
   doi={10.1112/jlms/jdv005},
}

\bib{BN-OrderpElements}{article}{
    author={Bright, Martin},
    author={Newton, Rachel},
    title={Evaluating the wild Brauer group},
    eprint={arXiv:2009.03282v3}
 }
 
			 \bib{CTC}{article}{
				 author={Colliot-Th\'{e}l\`ene, Jean-Louis},
				 author={Coray, Daniel},
				 title={L'\'{e}quivalence rationnelle sur les points ferm\'{e}s des surfaces
				 rationnelles fibr\'{e}es en coniques},
				 language={French},
				 journal={Compositio Math.},
				 volume={39},
				 date={1979},
				 number={3},
				 pages={301--332},
				 issn={0010-437X},
				 review={\MR{550646}},
			  }
 
			  \bib{CTP}{article}{
				author={Colliot-Th\'{e}l\`ene, Jean-Louis},
				author={Poonen, Bjorn},
				title={Algebraic families of nonzero elements of Shafarevich-Tate groups},
				journal={J. Amer. Math. Soc.},
				volume={13},
				date={2000},
				number={1},
				pages={83--99},
				issn={0894-0347},
				review={\MR{1697093}},
				doi={10.1090/S0894-0347-99-00315-X},
			 }

			 \bib{CTSSDII}{article}{
	author={Colliot-Th\'{e}l\`ene, Jean-Louis},
	author={Sansuc, Jean-Jacques},
	author={Swinnerton-Dyer, Peter},
	title={Intersections of two quadrics and Ch\^{a}telet surfaces. II},
	journal={J. Reine Angew. Math.},
	volume={374},
	date={1987},
	pages={72--168},
	issn={0075-4102},
	review={\MR{876222}},
 }
		 
 \bib{CTSSDI}{article}{
	author={Colliot-Th\'{e}l\`ene, Jean-Louis},
	author={Sansuc, Jean-Jacques},
	author={Swinnerton-Dyer, Peter},
	title={Intersections of two quadrics and Ch\^{a}telet surfaces. I},
	journal={J. Reine Angew. Math.},
	volume={373},
	date={1987},
	pages={37--107},
	issn={0075-4102},
	review={\MR{870307}},
 }
 \bib{CTS-GoodReduction}{article}{
	author={Colliot-Th\'{e}l\`ene, Jean-Louis},
	author={Skorobogatov, Alexei N.},
	title={Good reduction of the Brauer-Manin obstruction},
	journal={Trans. Amer. Math. Soc.},
	volume={365},
	date={2013},
	number={2},
	pages={579--590},
	issn={0002-9947},
	review={\MR{2995366}},
	doi={10.1090/S0002-9947-2012-05556-5},
 }
 
			 \bib{CTS-BrauerBook}{book}{
	author={Colliot-Th\'{e}l\`ene, Jean-Louis},
	author={Skorobogatov, Alexei N.},
	title={The Brauer-Grothendieck group},
	series={Ergebnisse der Mathematik und ihrer Grenzgebiete. 3. Folge. A
	Series of Modern Surveys in Mathematics [Results in Mathematics and
	Related Areas. 3rd Series. A Series of Modern Surveys in Mathematics]},
	volume={71},
	publisher={Springer, Cham},
	date={2021},
	pages={xv+453},
	isbn={978-3-030-74247-8},
	isbn={978-3-030-74248-5},
	review={\MR{4304038}},
	doi={10.1007/978-3-030-74248-5},
 }

 \bib{Corn-dp2}{article}{
   author={Corn, Patrick},
   title={The Brauer-Manin obstruction on del Pezzo surfaces of degree 2},
   journal={Proc. Lond. Math. Soc. (3)},
   volume={95},
   date={2007},
   number={3},
   pages={735--777},
   issn={0024-6115},
   review={\MR{2368282}},
   doi={10.1112/plms/pdm015},
}
 
 \bib{Creutz-Capturing}{article}{
    author={Creutz, Brendan},
    title={There are no transcendental Brauer-Manin obstructions on abelian
    varieties},
    journal={Int. Math. Res. Not. IMRN},
    date={2020},
    number={9},
    pages={2684--2697},
    issn={1073-7928},
    review={\MR{4095422}},
    doi={10.1093/imrn/rny098},
 }

 \bib{CV-Capturing}{article}{
    author={Creutz, Brendan},
    author={Viray, Bianca},
    title={Degree and the Brauer-Manin obstruction},
    note={With an appendix by Alexei N. Skorobogatov},
    journal={Algebra Number Theory},
    volume={12},
    date={2018},
    number={10},
    pages={2445--2470},
    issn={1937-0652},
    review={\MR{3911136}},
    doi={10.2140/ant.2018.12.2445},
 }

 \bib{CV-QuadraticPoints}{article}{
	author={Creutz, Brendan},
	author={Viray, Bianca},
	title={Quadratic points on intersections of two quadrics},
	 eprint = {{arXiv:2106.08560}},
     note = {To appear in \textit{Algebra and Number Theory}}
	}

    \bib{CVV-CapturingCurves}{article}{
   author={Creutz, Brendan},
   author={Viray, Bianca},
   author={Voloch, Jos\'{e} Felipe},
   title={The $d$-primary Brauer-Manin obstruction for curves},
   journal={Res. Number Theory},
   volume={4},
   date={2018},
   number={2},
   pages={Paper No. 26, 16},
   issn={2522-0160},
   review={\MR{3807414}},
   doi={10.1007/s40993-018-0120-3},
}

	\bib{GS-CSA}{book}{
		author={Gille, Philippe},
		author={Szamuely, Tam\'{a}s},
		title={Central simple algebras and Galois cohomology},
		series={Cambridge Studies in Advanced Mathematics},
		volume={101},
		publisher={Cambridge University Press, Cambridge},
		date={2006},
		pages={xii+343},
		isbn={978-0-521-86103-8},
		isbn={0-521-86103-9},
		review={\MR{2266528}},
		doi={10.1017/CBO9780511607219},
	 }

     \bib{GLN}{article}{
        author={Gvirtz, Dami\'{a}n},
        author={Loughran, Daniel},
        author={Nakahara, Masahiro},
        title={Quantitative arithmetic of diagonal degree 2 K3 surfaces},
        journal={Math. Ann.},
        volume={384},
        date={2022},
        number={1-2},
        pages={135--209},
        issn={0025-5831},
        review={\MR{4476231}},
        doi={10.1007/s00208-021-02280-w},
     }

     \bib{Harari-Duke}{article}{
        author={Harari, David},
        title={M\'{e}thode des fibrations et obstruction de Manin},
        language={French},
        journal={Duke Math. J.},
        volume={75},
        date={1994},
        number={1},
        pages={221--260},
        issn={0012-7094},
        review={\MR{1284820}},
        doi={10.1215/S0012-7094-94-07507-8},
     }

     \bib{Kanevsky}{article}{
        author={Kanevsky, Dimitri},
        title={Application of the conjecture on the Manin obstruction to various
        Diophantine problems},
        note={Journ\'{e}es arithm\'{e}tiques de Besan\c{c}on (Besan\c{c}on, 1985)},
        journal={Ast\'{e}risque},
        number={147-148},
        date={1987},
        pages={307--314, 345},
        issn={0303-1179},
        review={\MR{891437}},
     }

	 \bib{KT-effectivitySurfaces}{article}{
   author={Kresch, Andrew},
   author={Tschinkel, Yuri},
   title={Effectivity of Brauer-Manin obstructions on surfaces},
   journal={Adv. Math.},
   volume={226},
   date={2011},
   number={5},
   pages={4131--4144},
   issn={0001-8708},
   review={\MR{2770443}},
   doi={10.1016/j.aim.2010.11.012},
}

\bib{KT-effectivityAlg}{article}{
   author={Kresch, Andrew},
   author={Tschinkel, Yuri},
   title={Effectivity of Brauer-Manin obstructions},
   journal={Adv. Math.},
   volume={218},
   date={2008},
   number={1},
   pages={1--27},
   issn={0001-8708},
   review={\MR{2409407}},
   doi={10.1016/j.aim.2007.11.017},
}

\bib{Liang-WA}{article}{
	author={Liang, Yongqi},
	title ={Non-invariance of weak approximation properties under extension of the ground field},
	eprint = {arXiv:1805.08851}
}
	 
	 \bib{Lind}{book}{
   author={Lind, Carl-Erik},
   title={Untersuchungen \"{u}ber die rationalen Punkte der ebenen kubischen
   Kurven vom Geschlecht Eins},
   language={German},
   note={Thesis},
   publisher={University of Uppsala, Uppsala},
   date={1940},
   pages={97},
   review={\MR{0022563}},
}

\bib{Manin-ICM}{article}{
   author={Manin, Y. I.},
   title={Le groupe de Brauer-Grothendieck en g\'{e}om\'{e}trie diophantienne},
   conference={
      title={Actes du Congr\`es International des Math\'{e}maticiens},
      address={Nice},
      date={1970},
   },
   book={
      publisher={Gauthier-Villars, Paris},
   },
   date={1971},
   pages={401--411},
   review={\MR{0427322}},
}
	 
\bib{Milne-EC}{book}{
   author={Milne, James S.},
   title={\'{E}tale cohomology},
   series={Princeton Mathematical Series, No. 33},
   publisher={Princeton University Press, Princeton, N.J.},
   date={1980},
   pages={xiii+323},
   isbn={0-691-08238-3},
   review={\MR{559531}},
}

\bib{Nakahara-Capturing}{article}{
   author={Nakahara, Masahiro},
   title={Index of fibrations and Brauer classes that never obstruct the
   Hasse principle},
   journal={Adv. Math.},
   volume={348},
   date={2019},
   pages={512--522},
   issn={0001-8708},
   review={\MR{3928654}},
   doi={10.1016/j.aim.2019.03.012},
}

\bib{NR-WA}{article}{
	author={Nakahara, Masahiro},
	author={Roven, Sam},
	title ={Weak approximation on Ch\^atelet surfaces},
	eprint = {arXiv:2206.10556}
}

\bib{Nerode}{article}{
   author={Nerode, A.},
   title={A decision method for $p$-adic integral zeros of diophantine
   equations},
   journal={Bull. Amer. Math. Soc.},
   volume={69},
   date={1963},
   pages={513--517},
   issn={0002-9904},
   review={\MR{168460}},
   doi={10.1090/S0002-9904-1963-10979-9},
}

\bib{Neukirch-BonnLectures}{book}{
   author={Neukirch, J\"{u}rgen},
   title={Class field theory},
   note={The Bonn lectures, edited and with a foreword by Alexander Schmidt;
   Translated from the 1967 German original by F. Lemmermeyer and W. Snyder;
   Language editor: A. Rosenschon},
   publisher={Springer, Heidelberg},
   date={2013},
   pages={xii+184},
   isbn={978-3-642-35436-6},
   isbn={978-3-642-35437-3},
   review={\MR{3058613}},
   doi={10.1007/978-3-642-35437-3},
}

\bib{Pagano}{article}{
   author={Pagano, Margherita},
   title={An example of a Brauer-Manin obstruction to weak approximation at
   a prime with good reduction},
   journal={Res. Number Theory},
   volume={8},
   date={2022},
   number={3},
   pages={Paper No. 63, 15},
   issn={2522-0160},
   review={\MR{4476267}},
   doi={10.1007/s40993-022-00353-6},
}
 
 \bib{Poonen-H10Notices}{article}{
	author={Poonen, Bjorn},
	title={Undecidability in number theory},
	journal={Notices Amer. Math. Soc.},
	volume={55},
	date={2008},
	number={3},
	pages={344--350},
	issn={0002-9920},
	review={\MR{2382821}},
 }

 \bib{Poonen-Qpts}{book}{
	author={Poonen, Bjorn},
	title={Rational points on varieties},
	series={Graduate Studies in Mathematics},
	volume={186},
	publisher={American Mathematical Society, Providence, RI},
	date={2017},
	pages={xv+337},
	isbn={978-1-4704-3773-2},
	review={\MR{3729254}},
	doi={10.1090/gsm/186},
 }

 \bib{Reichardt}{article}{
	author={Reichardt, Hans},
	title={Einige im Kleinen \"{u}berall l\"{o}sbare, im Grossen unl\"{o}sbare
	diophantische Gleichungen},
	language={German},
	journal={J. Reine Angew. Math.},
	volume={184},
	date={1942},
	pages={12--18},
	issn={0075-4102},
	review={\MR{9381}},
	doi={10.1515/crll.1942.184.12},
 }

 \bib{RV-cubics}{article}{
	author={Rivera, Carlos},
	author={Viray, Bianca},
	title={Persistence of the Brauer-Manin obstruction on cubic surfaces},
	eprint = {arXiv:2111.03546},
    note = {To appear in \textit{Mathematical Research Letters}}
}

 \bib{Roven}{article}{
	author={Roven, Sam},
	title={On the Hasse Principle for conic bundles over even degree extensions},
    eprint = {arXiv:2203.10211}
	}

	 \bib{Skorobogatov-Torsors}{book}{
		author={Skorobogatov, Alexei},
		title={Torsors and rational points},
		series={Cambridge Tracts in Mathematics},
		volume={144},
		publisher={Cambridge University Press, Cambridge},
		date={2001},
		pages={viii+187},
		isbn={0-521-80237-7},
		review={\MR{1845760}},
		doi={10.1017/CBO9780511549588},
	 }

     \bib{SZ-Capturing}{article}{
   author={Skorobogatov, Alexei N.},
   author={Zarhin, Yuri G.},
   title={Kummer varieties and their Brauer groups},
   journal={Pure Appl. Math. Q.},
   volume={13},
   date={2017},
   number={2},
   pages={337--368},
   issn={1558-8599},
   review={\MR{3858012}},
   doi={10.4310/PAMQ.2017.v13.n2.a5},
}

\bib{Springer}{article}{
   author={Springer, T. A.},
   title={Quadratic forms over fields with a discrete valuation. II. Norms},
   journal={Nederl. Akad. Wetensch. Proc. Ser. A. {\bf 59} = Indag. Math.},
   volume={18},
   date={1956},
   pages={238--246},
%    review={\MR{0076794}},
}

	\bib{SD-Cubic}{article}{
   author={Swinnerton-Dyer, Peter},
   title={Brauer-Manin obstructions on some Del Pezzo surfaces},
   journal={Math. Proc. Cambridge Philos. Soc.},
   volume={125},
   date={1999},
   number={2},
   pages={193--198},
   issn={0305-0041},
   review={\MR{1643855}},
   doi={10.1017/S0305004198003144},
}

	\end{biblist}
\end{bibdiv}

\end{document}